\newtheorem{remark}{Remark}[section]
\newtheorem{ass}{Assumption}[section]
\def\bc{\mathbf{c}}
\def\bd{\mathbf{d}}
\def\be{\mathbf{e}}
\def\bh{\mathbf{h}}
\def\bl{\mathbf{l}}
\def\bm{\mathbf{m}}
\def\bp{\mathbf{p}}
\def\bq{\mathbf{q}}
\def\br{\mathbf{r}}
\def\bs{\mathbf{s}}
\def\bu{\mathbf{u}}
\def\bv{\mathbf{v}}
\def\bw{\mathbf{w}}
\def\bx{\mathbf{x}}
\def\by{\mathbf{y}}
\def\bz{\mathbf{z}}
\def\bzero{\mathbf{0}}
\def\bbeta{\boldsymbol{\beta}}
\def\blambda{\boldsymbol{\lambda}}
\def\bmu{\boldsymbol{\mu}}
\def\btau{\boldsymbol{\tau}}
\def\bphi{\boldsymbol{\varphi}}
\renewcommand{\Re}{\mathbb{R}}
\DeclareMathOperator*{\argmin}{argmin}
\DeclareMathOperator*{\range}{range}
\begin{document}

\title{A two-phase gradient method for quadratic programming
problems with a single linear constraint and bounds on the
variables\thanks{This work was partially supported by Gruppo Nazionale per il Calcolo
Scientifico - Istituto Nazionale di Alta Matematica (GNCS-INdAM).}}
\author{Daniela di Serafino\thanks{Dipartimento di Matematica e Fisica,
Universit\`a degli Studi della Campania L. Vanvitelli,  
viale A. Lincoln~5, 81100 Caserta, Italy, daniela.diserafino@unicampania.it.}
\and
Gerardo Toraldo\thanks{Dipartimento di Matematica e Applicazioni
R.~Caccioppoli, Universit\`a degli Studi di Napoli Federico II,
via Cintia, 80126 Napoli, Italy, toraldo@unina.it.}
\and
Marco Viola\thanks{Dipartimento di Ingegneria Informatica Automatica e
Gestionale A. Ruberti, Sapienza - Universit\`a di Roma,
via Ariosto 25, 00185 Roma, Italy, marco.viola@uniroma1.it.}
\and
Jesse Barlow\thanks{Department of Computer Science and Engineering, 
Pennsylvania State University, 
343G IST Building, University Park, PA 16802-6822, USA,
barlow@cse.psu.edu.}}

\maketitle

\centerline{\footnotesize FINAL~VERSION -- May 25, 2018}

\begin{abstract}
We propose a gradient-based method for quadratic programming problems with a
single linear constraint and bounds on the variables.
Inspired by the GPCG algorithm for bound-constrained convex
quadratic programming [J.J. Mor\'e and G. Toraldo, SIAM J.~Optim.~1, 1991],
our approach alternates between two phases until convergence: an
identification phase, which performs gradient projection iterations until
either a candidate active set is identified or no reasonable progress is
made, and an unconstrained minimization phase, which reduces the objective
function in a suitable space defined by the identification phase, by
applying either the conjugate gradient method or a recently proposed
spectral gradient method. However, the algorithm differs from GPCG not only
because it deals with a more general class of problems,
but mainly for the way it stops the minimization phase.
This is based on a comparison between a measure
of optimality in the reduced space and a measure of bindingness
of the variables that are on the bounds, defined by extending the concept
of proportional iterate, which was proposed by some authors for box-constrained problems.
If the objective function is bounded, the algorithm converges to a stationary point
thanks to a suitable application of the gradient projection method in the identification
phase. For strictly convex problems, the algorithm converges to the optimal solution in a finite
number of steps even in case of degeneracy. Extensive numerical
experiments show the effectiveness of the proposed approach.
\end{abstract}

\begin{keywords}
Quadratic programming, bound and single linear constraints, gradient projection, proportionality.
\end{keywords}

\begin{AMS}
65K05, 90C20.
\end{AMS}
	

\section{Introduction\label{sec:intro}}
	
We are concerned with the solution of Quadratic Programming problems with a Single
Linear constraint and lower and upper Bounds on the variables (SLBQPs): \\[-5mm]
\begin{equation}\label{SLBQP}
\begin{array}{rl}
\min             & \displaystyle f(\bx) := \frac{1}{2} \bx^T\,H\,\bx - \bc^T\bx , \\
\mbox{s.t.}   & \displaystyle \bq^T\bx=b, \;\, \bl\leq \bx\leq \bu,
\end{array}
\end{equation}
where $H \! \in \Re^{n \times n}$ is symmetric, $\bc, \bq \in \Re^{n}$,
$b \in \Re$, $\bl \in \! \left\lbrace\Re \cup \{ -\infty \} \right\rbrace^{n}$,
$\bu \in \! \left\lbrace\Re \cup \{ +\infty \} \right\rbrace^{n}$,
and, without loss of generality, $l_i<u_i$ for all $i$.
In general, we do not assume that the problem is strictly convex.
SLBQPs arise in many applications, such as support vector machine
training~\cite{Vapnik:1982}, portfolio selection~\cite{PardalosRosen:1987},
multicommodity network flow and logistics \cite{Meyer:1984}, and statistics estimate
from a target distribution \cite{Amaral:2017}.
Therefore, designing efficient methods for the solution of~\eqref{SLBQP}
has both a theoretical and a practical interest.

Gradient Projection (GP) methods are widely used to solve large-scale SLBQP problems,
thanks to the availability of low-cost projection algorithms onto the feasible set
of~\eqref{SLBQP} (see, e.g, \cite{CalamaiMore:1987a,Dai:2006a,Condat:2016}).
In particular, Spectral Projected Gradient methods~\cite{Birgin:2000}, other GP
methods exploiting variants of Barzilai-Borwein (BB) steps~\cite{Serafini:2005,Dai:2006a},
and more recent Scaled Gradient Projection methods \cite{Bonettini:2009}
have proved their effectiveness in several applications.

Bound-constrained Quadratic Programming problems (BQPs) can
be regarded as a special case of SLBQPs, 
where the theory or the implementation can be simplified.
%
%
This has favoured the development of more specialized gradient-based methods,
built upon the idea of combining steps aimed at identifying the variables that are active
at a solution (or at a stationary point) with unconstrained minimizations in reduced spaces
defined by fixing the variables that are estimated active
\cite{More:1989,Friedlander:1989,More:1991,Friedlander:1994b,Bielschowsky:1997,
Dostal:1997,Dostal:2005,Hager:2006,DostalPospisil:2015,Hager:2016}.
Thanks to the identification properties of the GP method~\cite{CalamaiMore:1987} and
to its capability of adding/removing multiple variables to/from the active set in a single iteration,
GP steps are a natural choice to determine the active variables. A well-known method
based on this approach is GPCG~\cite{More:1991}, developed for strictly convex BQPs.
It alternates between two phases: an identification phase, which performs GP iterations
until a suitable face of the feasible set is identified or no reasonable progress toward
the solution is achieved, and a minimization phase, which uses the Conjugate Gradient (CG)
method to find an approximate minimizer of the objective function in the reduced space
resulting from the identification phase. We note that the global convergence of the GPCG
method relies on the global convergence of GP with steplengths satisfying a suitable
sufficient decrease condition~\cite{CalamaiMore:1987}. 
Furthermore, GPCG has finite convergence under a dual nondegeracy assumption,
thanks to the ability of the GP method to identify the active constraints in a
finite number of iterations~\cite{CalamaiMore:1987}, and to the finite termination of
the CG method. Finally, the identification property also holds for
nonquadratic objective functions and polyhedral constraints,
and thus the algorithmic framework described so far can
be extended to more general problems.

Here we propose a two-phase GP method for SLBQPs,
called \emph{Proportionality-based 2-phase Gradient Projection (P2GP)} method,
inspired by the GPCG algorithm. Besides targeting problems
more general than strictly convex BQPs, the new method differs from GPCG
because it follows a different approach in deciding when to terminate optimization
in the reduced space. Whereas GPCG uses a heuristics based on the
bindingness of the active variables, P2GP relies on the comparison
between a measure of optimality within the reduced space and a measure
of bindingness of the variables that are on the bounds. This approach exploits the concept
of \emph{proportional iterate}, henceforth also refereed to as \emph{proportionality}.
This concept, presented by Dost\'al for strictly convex BQPs \cite{Dostal:1997}, is based on the splitting of the
optimality conditions between \emph{free} and \emph{chopped gradients}, firstly introduced by Friedlander
and Mart\'{i}nez in \cite{Friedlander:1989}.
To this end, we generalize the definition of free and chopped gradients
to problem~\eqref{SLBQP}. As in GPCG, and unlike other algorithms for
BQPs sharing a common ground (e.g.,
\cite{Dostal:1997,DostalPospisil:2015,Dostal:2005,Robinson:2015}),
the task of adjusting the active set is left only to the GP steps;
thus, for strictly convex BQPs our algorithm differs from GPCG in the criterion
used to stop minimization of the reduced problem. This change 
makes a significant difference in the effectiveness of the algorithm  as our numerical
experiments show. In addition, the application of the proportionality concept allows
to state finite convergence for strictly convex problems also for dual-degenerate solutions.
More generally, if the objective function is bounded, the algorithm converges to
a stationary point as a result of suitable application of the GP method
in the identification phase.

About the GP iterations, we note that the identification property holds provided
that a sufficient decrease condition holds, and therefore the choice of the Cauchy
stepsize as initial trial value in the projected gradient steps~\cite{More:1989,More:1991}
can be replaced by rules used by new spectral gradient methods.
Inspired by encouraging results reported for BQPs in~\cite{Dai:2005}
and by further studies on steplength selection in gradient
methods~\cite{diSerafino:2016,diSerafino:2017}, we consider a
monotone version of the Projected BB method which uses the
$\textrm{ABB}_{\textrm{min}}$ steplength introduced in~\cite{Frassoldati:2008}.

In the minimization phase, we use the CG method, and, in the strictly convex case,
we also use the SDC method proposed in~\cite{Deasmundis:2014}.
This provides a way to extend SDC to the costrained case, with the goal of
exploiting its smoothing and regularizing effect observed on
certain unconstrained ill-posed inverse problems~\cite{Deasmundis:2016}.
Of course, the CG solver is still the reference choice in general, especially
because it is able to deal with nonconvexity through directions of negative curvature
(as done, e.g., in \cite{Robinson:2015}), whereas handling negative curvatures
with spectral gradient methods may be a non-trivial
task (see, e.g., \cite{Curtis:2016} and the references therein).

This article is organized as follows. In Section~\ref{sec:stationarity}, we recall
stationarity results for problem~\eqref{SLBQP}. In Section~\ref{sec:proportioning},
we define free and chopped gradients for SLBQPs and show how they can be used
to extend the concept of proportionality to this class of problems. In Section~\ref{sec:method},
we describe the P2GP method and state its convergence properties. We discuss the results
of extensive numerical experiments in Section~\ref{sec:experiments}, showing the
effectiveness of our approach. We draw some conclusions in 
Section~\ref{sec:conclusions}.


\subsection{Notation\label{sec:notation}}

Throughout this paper scalars are denoted by lightface
Roman fonts, e.g., $a \in \Re$, vectors by boldface Roman fonts, e.g., $\bv \in \Re^n$,
and matrices by italicized lightface capital fonts, e.g., $M\in\Re^{n\times n}$.
The vectors of the standard basis of $ \Re^n $ are indicated as $ \be_1,\ldots,\be_n$. 
Given $\mathcal{R}, \mathcal{C} \subseteq \left\lbrace 1, \ldots, n \right\rbrace$, we set \\[-12pt]
$$
   \bv_{\mathcal{R}} := 
   (v_i)_{i\in\mathcal{R}}, \quad
   M_{\mathcal{R} \mathcal{C}} := 
   \left( m_{ij} \right)_{i\in\mathcal{R}, j\in\mathcal{C}},
$$
where $v_i$ is the $i$th entry of $\bv$ and $m_{ij}$ the $(i,j)$th entry of $M$.
For any vector $\bv$, $\left\lbrace \bv \right\rbrace^\perp$ is
the space orthogonal to $\bv$.
For any symmetric matrix $M$, we use $ \kappa(M)$, $\zeta_{min}(M)$
and $\zeta_{max}(M)$ to indicate the condition number, and the minimum
and maximum eigenvalue of $M$, respectively. Norms $\| \cdot \|$
are $\ell_2$, unless otherwise stated.

The feasible set, $\Omega$, of problem~\eqref{SLBQP} is given by
$$
   \Omega := \left\lbrace \bx \in \Re^n \, : \, \bq^T\bx = b 
     \;\wedge\; \bl\leq \bx\leq \bu\right\rbrace.
$$
For any $\bx \in \Omega$, we define the following index sets:
\begin{eqnarray*}
\begin{array}{ll}
    \displaystyle \mathcal{A}_l({\bx}) := \{i \, : \, {x}_i=l_i \},
       & \displaystyle \ \ \mathcal{A}_u({\bx}) := \{i \, : \, {x}_i=u_i \}, \\[4pt]
    \displaystyle \mathcal{A}({\bx}) := \mathcal{A}_l({\bx}) \cup \mathcal{A}_u({\bx}),
       & \displaystyle \ \ \mathcal{F}({\bx}) := \{1,\ldots,n\} \setminus \mathcal{A}({\bx}).
\end{array}
\end{eqnarray*}
$\mathcal{A}(\bx)$ and $\mathcal{F}(\bx)$ are called the active and free sets at $\bx$,
respectively. Given $\bx, \by \in \Omega$, by writing $ \mathcal{A}(\bx) \subseteq \mathcal{A}(\by)$
we mean that
$$
\mathcal{A}_l(\bx) \subseteq \mathcal{A}_l(\by), \quad
\mathcal{A}_u(\bx) \subseteq \mathcal{A}_u(\by)
$$
both hold. For any $\bx\in\Omega$, we also set
\begin{eqnarray}
   \Omega(\bx) & := & \left\lbrace \bv \in \Re^n \, : \, \bq^T\bv = b \;
   \wedge \;  v_i=x_i \,\; \forall \,i \in \mathcal{A}(\bx) \right\rbrace, \label{omegax} \\
   \Omega_0(\bx) & := & \left\lbrace \bv \in \Re^n \, : \, \bq^T\bv = 0 \;
   \wedge \; v_i=0 \,\; \forall \, i \in \mathcal{A}(\bx) \right\rbrace. \label{omega0x}
\end{eqnarray}
Note that $\Omega(\bx)$ is the affine closure of the face determined by the active set at $\bx$.

We use superscripts to denote the elements of a sequence, e.g.,
$\left\lbrace \bx^k\right\rbrace $; furthermore, in order to simplify
the notation, for any $\bx^k$ and $\bx^*$ we also define
\begin{eqnarray*}
	& & f^k := f(\bx^k),
       \quad \nabla f^k := \nabla f(\bx^k), 
       \quad \mathcal{A}^k := \mathcal{A}(\bx^k), 
       \quad \mathcal{F}^k := \mathcal{F}(\bx^k), \\
	& & f^* := f(\bx^*),
       \quad \, \nabla f^* := \nabla f(\bx^*),
       \quad \mathcal{A}^* := \mathcal{A}(\bx^*), 
       \quad \mathcal{F}^* := \mathcal{F}(\bx^*).
\end{eqnarray*}

Finally, for any finite set $\mathcal{T}$, we denote by $\vert \mathcal{T} \vert$ its cardinality.


\section{Stationarity results for SLBQPs\label{sec:stationarity}}
		
We recall that $\bx^* \in \Omega$ is a stationary point for problem~\eqref{SLBQP}
if and only if there exist Lagrange multipliers $\rho^*, \lambda_i^* \in \Re$, with
$i \in \mathcal{A}^*$, such that	\\[-14pt]
\begin{equation} \label{slbqp_kkt}
   \nabla f^* = \sum_{i \in \mathcal{A}^*} \lambda_i^* \be_i + \rho^* \bq,
   \quad \lambda_i^* \ge 0 \; \mbox{ if } \;  i \in \mathcal{A}_l^*,
   \quad \lambda_i^* \le 0 \;  \mbox{ if } \;  i \in \mathcal{A}_u^*,
\end{equation}
\vskip -4pt
\noindent or, equivalently, \\[-8mm]
\begin{eqnarray}
   && \qquad\qquad\qquad\qquad\qquad
        \nabla f^*_{\mathcal{F}^*} - \rho^* \bq_{\mathcal{F}^*} =  \bzero, 
        \label{slbqp_kkt_compwise} \\
  && \lambda^*_i=\nabla f _i^* - \rho^* q_i \geq 0   \;
       \mbox{ if }  \; i \in \mathcal{A}_l^*, \quad
        \lambda^*_i= \nabla f_i^* - \rho^* q_i \leq 0  \;
        \mbox{ if }  \; i \in \mathcal{A}_u^*.
       \label{slbqp_kkt_compwise_2} 
\end{eqnarray}
If $\bq_{\mathcal{F}^*} \! \ne \bzero$,
by taking the scalar product of~\eqref{slbqp_kkt_compwise} with $\bq_{\mathcal{F}^*}$,
we obtain 	
$$
  \rho^* =
   \frac{\bq_{\mathcal{F}^*}^T \, \nabla f^*_{\mathcal{F}^*}}	  
    {\bq_{\mathcal{F}^*}^T \, \bq_{\mathcal{F}^*}}
$$
(with a little abuse of notation we include $\mathcal{F}^* = \emptyset$
in the case $\bq_{\mathcal{F}^*} \! = \bzero$). Then, by defining for all $\bx \in \Omega$
\\[-7mm]
\begin{equation} \label{defrho}
\rho(\bx) :=
\left\{  
\begin{array}{cl}
                      \!\! 0        & \mbox{if } \; \bq_{\mathcal{F}} \! = \bzero, \\
\displaystyle  \!\! \frac{\bq_{\mathcal{F}}^T \, \nabla f _{\mathcal{F}}(\bx)}
                {\bq_{\mathcal{F}}^T \, \bq_{\mathcal{F}}} &  \mbox{otherwise},
\end{array} 
\right.
\end{equation}
where $\mathcal{F} = \mathcal{F}(\bx)$, and \\[-8mm]
\begin{equation} \label{defh}
\bh(\bx) := \nabla f (\bx) - \rho(\bx)\bq ,
\end{equation}
conditions \eqref{slbqp_kkt_compwise}-\eqref{slbqp_kkt_compwise_2}
can be expressed as
\begin{equation} \label{hstar}
   h_i^*=0 \; \mbox{ if } i \in  \mathcal{F}^*, \quad
   h_i^*\geq 0 \; \mbox{ if }  i \in  \mathcal{A}_l^*, \quad
   h_i^*\leq 0 \; \mbox{ if }  i \in  \mathcal{A}_u^*.
\end{equation}
This suggests the following definition.
\par \smallskip
\begin{definition}[Binding set]
For any $\bx \in \Omega$, the binding set at $\bx$ is defined as
\begin{equation} \label{definition_binding}
\mathcal{B}({\bx}) := 
\left\lbrace i  \, : \, \left( i \in \mathcal{A}_l (\bx) \, \wedge \, h_i (\bx) \geq 0 \right)
\; \vee \;
\left( i\in\mathcal{A}_u (\bx) \, \wedge \,  h_i (\bx) \leq  0 \right)\right\rbrace.
\end{equation}
\end{definition}

\vskip -10pt \noindent 
We note that, for the BQP case, \eqref{definition_binding} 
corresponds to the standard definition of binding set where $\bh(\bx)$
is replaced by $\nabla f (\bx)$.

We can also provide an estimate of the Lagrange multipliers based on \eqref{defh},
as stated by the following theorem.
\par \smallskip
\begin{theorem} \label{theorem_lambda}
Assume that $\left\{ \bx^k \right\}$ is a sequence in $\Omega$ that
converges to a nondegenerate stationary point $\bx^*$, and
$\mathcal{A}(\bx^k ) = \mathcal{A}(\bx^* )$ for all $k$ sufficiently large. Then
\begin{equation}
\lim_{k \rightarrow \infty} \rho(\bx^k) = \rho^* \quad \mbox{and}  \quad
\lim_{k \rightarrow \infty} \lambda_i(\bx^k) = \lambda^*_i
        \;\;\, \forall i \in \mathcal{A}^*,
\end{equation}
where $\lambda_i(\bx)$ is defined as follows:
\begin{equation*}
\lambda_i(\bx) := \left\{ \!\! \begin{array}{cl}
\max \{ 0, h_i(\bx) \}  & \!\! \mbox{if }  \; i \in \mathcal{A}_l(\bx), \\
\min \{ 0, h_i(\bx) \}  & \!\! \mbox{if }  \; i \in \mathcal{A}_u(\bx),\\
0 & \!\! \mbox{if }  \; i \in \mathcal{F}(\bx).\end{array} \right.
\end{equation*}
\end{theorem}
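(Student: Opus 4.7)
The plan is to combine continuity of $\nabla f$ with the hypothesis that the active set is eventually constant, then use nondegeneracy to remove the discontinuity hidden in the $\max$/$\min$ that defines $\lambda_i(\bx)$.

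First I would fix $K$ such that $\mathcal{A}(\bx^k)=\mathcal{A}^*$, and hence $\mathcal{F}(\bx^k)=\mathcal{F}^*$, for all $k\ge K$. Since $f$ is quadratic, $\nabla f$ is continuous, so $\nabla f^k \to \nabla f^*$. For $\rho$ there are two cases. If $\bq_{\mathcal{F}^*}\neq\bzero$, then for $k\ge K$ the denominator in \eqref{defrho} is the fixed positive quantity $\bq_{\mathcal{F}^*}^T\bq_{\mathcal{F}^*}$, while the numerator $\bq_{\mathcal{F}^*}^T\,\nabla f_{\mathcal{F}^*}^k$ tends to $\bq_{\mathcal{F}^*}^T\,\nabla f_{\mathcal{F}^*}^*$, yielding $\rho(\bx^k)\to\rho^*$. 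If instead $\bq_{\mathcal{F}^*}=\bzero$, then by definition $\rho(\bx^k)=0=\rho^*$ for all $k\ge K$, so the limit is trivial.

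Next, using the continuity of $\nabla f$ together with $\rho(\bx^k)\to\rho^*$ in \eqref{defh}, I obtain
$$
h_i(\bx^k) \;=\; \nabla f_i(\bx^k) - \rho(\bx^k)\, q_i \;\longrightarrow\; \nabla f_i^* - \rho^* q_i \;=\; h_i^* \;=\; \lambda_i^*
$$
componentwise, where the last equality follows from \eqref{slbqp_kkt_compwise_2}. Now I invoke nondegeneracy: $\lambda_i^*\neq 0$ for every $i\in\mathcal{A}^*$, which combined with the sign constraints in \eqref{hstar} gives $h_i^*>0$ for $i\in\mathcal{A}_l^*$ and $h_i^*<0$ for $i\in\mathcal{A}_u^*$. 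By the convergence $h_i(\bx^k)\to h_i^*$, for each such $i$ there exists $K_i\ge K$ such that $h_i(\bx^k)$ has the same (strict) sign as $h_i^*$ for all $k\ge K_i$. Hence for $i\in\mathcal{A}_l^*$ we eventually have $\lambda_i(\bx^k)=\max\{0,h_i(\bx^k)\}=h_i(\bx^k)$, and for $i\in\mathcal{A}_u^*$ we eventually have $\lambda_i(\bx^k)=\min\{0,h_i(\bx^k)\}=h_i(\bx^k)$, so in both cases $\lambda_i(\bx^k)\to h_i^*=\lambda_i^*$.

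The only delicate point is the switch in the piecewise definition of $\lambda_i(\bx)$ across $h_i=0$; this is precisely what the nondegeneracy hypothesis neutralizes, and the rest is just continuity of a rational function of $\bx$ on the face where the active set is fixed. The trivial case $\bq_{\mathcal{F}^*}=\bzero$ and the convention $\rho(\bx)=0$ must be addressed separately to keep \eqref{defrho} well defined, but no further work is needed there.
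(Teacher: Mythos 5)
Your proof is correct and takes the same route as the paper, whose entire proof is the single sentence ``The result is a straightforward consequence of the continuity of $\nabla f$''; you have simply supplied the details (eventually constant active set $\Rightarrow$ convergence of $\rho(\bx^k)$ via \eqref{defrho} $\Rightarrow$ convergence of $h_i(\bx^k)$ to $h_i^*=\lambda_i^*$). One minor remark: the nondegeneracy hypothesis is not actually needed for your last step, since $t\mapsto\max\{0,t\}$ and $t\mapsto\min\{0,t\}$ are continuous and the stationarity conditions \eqref{hstar} already give $h_i^*\ge 0$ on $\mathcal{A}_l^*$ and $h_i^*\le 0$ on $\mathcal{A}_u^*$, so $\max\{0,h_i(\bx^k)\}\to\max\{0,h_i^*\}=h_i^*=\lambda_i^*$ directly.
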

\begin{proof}
The result is a straightforward consequence of the continuity of $\nabla f$.
\end{proof}

\smallskip
\begin{remark}
Theorem~\ref{theorem_lambda} can be rephrased by saying that $\left( \rho(\bx),
\, \blambda(\bx)^T \right)^T$, where $\blambda(\bx) =
(\lambda_i(\bx))_{i \in \mathcal{A}(\bx)}$,
is a consistent Lagrange multiplier estimate for~\eqref{SLBQP}, according to the definition
in \cite[p.~107]{CalamaiMore:1987}.
\end{remark}


\smallskip
Another way to express stationarity for problem~\eqref{slbqp_kkt}
is by using the projected gradient of $f$ at a point $\bx \in \Omega$,
defined by Calamai and Mor\'{e}~\cite{CalamaiMore:1987} as
\begin{equation} \label{pg_grad_minprob}
\nabla_{\Omega} f(\bx) := \argmin \left\lbrace  \Vert \bv + \nabla f(\bx)\Vert 
\; \mbox{ s.t.} \; \bv \in T_\Omega(\bx) \right\rbrace,
\end{equation}
\par \vskip -12pt
\noindent where \\[-14pt]
$$
  T_\Omega(\bx) = \left\lbrace \bv \in \Re^n \, : \; \bq^T\bv=0 \; \wedge \;
  v_i \geq 0 \;\, \forall \, i \in \mathcal{A}_l(\bx) \; \wedge \;
  v_i \leq 0 \;\, \forall \, i \in \mathcal{A}_u(\bx)\right\rbrace
$$
is the tangent cone to $\Omega$ at $\bx$, i.e., the closure of the cone of all feasible
directions at~$\bx$. It is well known that $\bx^* \in \Omega$ is a stationary point
for~\eqref{SLBQP} if and only if $\nabla_{\Omega} f(\bx^*) = \bzero$,
which is equivalent to
$$
-\nabla f(\bx^*) \in T_\Omega(\bx)^\circ,
$$
where $T_\Omega(\bx)^\circ= \left\lbrace \bw \in \Re^n \, : \, \bw^T\bv
\leq 0  \;\, \forall \, \bv \in  T_\Omega(\bx) \right\rbrace $
is the polar of the tangent cone at $\bx$, i.e., the normal cone to $\Omega$ at $\bx$.

In the method proposed in this work we use the projected gradient as a measure of stationarity.
It could be argued that the projected gradient is inappropriate
to measure closeness to a stationary point, since it is only lower semicontinuous
(see \cite[Lemma~3.3]{CalamaiMore:1987}); because of that, e.g., Mohy-ud-Din
and Robinson in their algorithm prefer to use the so-called reduced free and chopped
gradients \cite{Robinson:2015}. However, Calamai and Mor\'e in \cite{CalamaiMore:1987}
show that the limit points of a bounded sequence $\{ \bx^k \}$ generated by any GP algorithm
are stationary and 
\begin{equation} \label{gptozero}
\lim_{k \rightarrow \infty} \Vert \nabla_{\Omega} f(\bx^k)\Vert = 0,
\end{equation}  
provided the steplengths are bounded and satisfy suitable sufficient decrease conditions.
Similar results hold for a more general algorithmic framework
(see~\cite[Algorithm~5.3]{CalamaiMore:1987}), which GPCG as well as the new
method P2GP fit into. Another important issue is that, for any sequence $\{ \bx^k \}$
converging to a nondegenerate stationary point $\bx^*$, if \eqref{gptozero} holds
then $\mathcal{A}^k = \mathcal{A^*}$ for all $k$ sufficiently large. However,
for problem~\eqref{SLBQP}, condition \eqref{gptozero} has an important
meaning in terms of active constraints identification even in case of degeneracy,
provided the following constraint qualification holds.
\par \smallskip
\begin{ass}[Linear Independence Constraint Qualification - LICQ] \label{licq}	
Let~$\bx^*$ be any stationary point of \eqref{SLBQP}. The active constraint normals
$\left\lbrace \bq \right\rbrace \cup \left\lbrace \be_i \, : \, i\in\mathcal{A}^* \right\rbrace$
are linearly independent.
\end{ass}
\par \smallskip \noindent
This assumption is not very restrictive; for instance, it is always satisfied if $\Omega$
is the standard simplex. Furthermore, it guarantees $\bq_{\mathcal{F}^*} \! \ne \bzero$.

The following proposition summarizes the convergence properties for a sequence $\{ \bx^k \}$
satisfying~\eqref{gptozero}, both in terms of stationarity and active set identification.
\par \smallskip
\begin{theorem} \label{th:convres_calamaimore_gptozero}
Assume that $\left\{ \bx^k \right\}$ is a sequence in $\Omega$ 
that converges to a point~$\bx^*$ and
$\lim_{k \rightarrow \infty} \Vert \nabla_{\Omega} f(\bx^k)\Vert = 0$. Then 
\begin{itemize}
\item[(i) ] $\bx^*$ is a stationary point for problem \eqref{SLBQP};
\item[(ii) ] if Assumption~\ref{licq} holds, then $\mathcal{A}_N^*\subseteq \mathcal{A}(\bx^k)$
for all $k$ sufficiently large, where $\mathcal{A}_N^* = \left\lbrace i \in \mathcal{A}^* \, : \, 
\lambda_i^* \neq 0 \right\rbrace$ and $\lambda_i$ is the Lagrange multiplier associated with
the $i$th bound constraint.
\end{itemize}
%
%
\end{theorem}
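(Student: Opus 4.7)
\emph{Proof plan.} For part~(i), the idea is to appeal to the lower semicontinuity of the projected gradient norm, which is exactly \cite[Lemma~3.3]{CalamaiMore:1987}. Because $\bx^k\to\bx^*$ and $\|\nabla_\Omega f(\bx^k)\|\to 0$, lower semicontinuity forces
$$0\leq \|\nabla_\Omega f(\bx^*)\|\leq \liminf_{k}\|\nabla_\Omega f(\bx^k)\|=0,$$
so $\nabla_\Omega f(\bx^*)=\bzero$ and $\bx^*$ is stationary for~\eqref{SLBQP}.

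For part~(ii) I would argue by contradiction. Fix $i\in\mathcal{A}_N^*$, and suppose some subsequence $\{k_j\}$ has $i\notin\mathcal{A}^{k_j}$. Without loss of generality $i\in\mathcal{A}_l^*$ with $\lambda_i^*>0$ (the $\mathcal{A}_u^*$ case is symmetric). Two preliminary observations keep the construction clean. First, every $j\in\mathcal{F}^*$ has $l_j<x_j^*<u_j$ strictly, so $\bx^k\to\bx^*$ yields $\mathcal{A}^k\subseteq\mathcal{A}^*$ for all $k$ sufficiently large. Second, Assumption~\ref{licq} forces $\bq_{\mathcal{F}^*}\ne\bzero$, so I can fix $m\in\mathcal{F}^*$ with $q_m\ne 0$, and the first observation gives $m\in\mathcal{F}^{k_j}$ for $j$ large.

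The workhorse is the bounded feasible direction
$$\bd := -\be_i + \frac{q_i}{q_m}\,\be_m.$$
By construction $\bq^T\bd=0$, and since $i,m\in\mathcal{F}^{k_j}$ the sign restrictions defining $T_\Omega(\bx^{k_j})$ are vacuous at the only non-zero components of $\bd$, so $\bd\in T_\Omega(\bx^{k_j})$ for $j$ large. Writing $\bv^k:=\nabla_\Omega f(\bx^k)$, the projection characterization gives $-\nabla f(\bx^k)-\bv^k\in T_\Omega(\bx^k)^\circ$, hence $\langle \nabla f(\bx^k)+\bv^k,\bd\rangle\geq 0$, and Cauchy--Schwarz then yields
$$\langle \nabla f(\bx^k),\bd\rangle \geq -\|\bv^k\|\,\|\bd\|\ \longrightarrow\ 0.$$

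To close the contradiction, I would evaluate the same inner product in the limit using~\eqref{slbqp_kkt}: because $m\in\mathcal{F}^*$ we have $\nabla f_m^*=\rho^* q_m$, and because $i\in\mathcal{A}_l^*$ we have $\nabla f_i^*=\rho^* q_i+\lambda_i^*$, so
$$\langle \nabla f^*,\bd\rangle=-\nabla f_i^*+\frac{q_i}{q_m}\nabla f_m^* = -\lambda_i^*<0.$$
Continuity of $\nabla f$ then contradicts the previous lower bound. The only delicate point I anticipate is certifying $\bd\in T_\Omega(\bx^{k_j})$ along the subsequence, and this is precisely why both preliminary facts---the early identification $\mathcal{A}^k\subseteq\mathcal{A}^*$ and the LICQ-supplied index $m\in\mathcal{F}^*$ with $q_m\ne 0$---are needed. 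Since $\mathcal{A}_N^*$ is finite, taking the maximum of the thresholds over $i\in\mathcal{A}_N^*$ gives the desired conclusion.
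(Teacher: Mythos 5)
Your proof is correct, and part~(i) coincides with the paper's (lower semicontinuity of $\Vert\nabla_\Omega f\Vert$). For part~(ii) you follow the same overall skeleton as the paper --- contradiction, a fixed test direction lying in $T_\Omega(\bx^{k_j})$ along the offending subsequence, the Calamai--Mor\'e inequality $\nabla f(\bx^k)^T\bd\ge-\Vert\nabla_\Omega f(\bx^k)\Vert\,\Vert\bd\Vert$, a passage to the limit, and a sign contradiction with the KKT conditions --- but your choice of test direction is genuinely different and more elementary. The paper takes $\bd=P_\Phi(\be_m)$, the orthogonal projection of $\be_m$ onto the subspace orthogonal to $\bq$ and to all active normals except the one under scrutiny; it then needs LICQ to certify $P_\Phi(\be_m)\neq\bzero$, and the limiting inner product evaluates to $\lambda_m^*\Vert P_\Phi(\be_m)\Vert^2$. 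You instead build the explicit two-sparse direction $-\be_i+(q_i/q_m)\be_m$ from a free index $m$ with $q_m\neq0$ (whose existence is exactly what LICQ provides here, since for this constraint structure LICQ is equivalent to $\bq_{\mathcal{F}^*}\neq\bzero$), and the limiting inner product is computed in closed form as $-\lambda_i^*$. What your route buys is transparency: membership of $\bd$ in the tangent cone is immediate because its support avoids $\mathcal{A}^{k_j}$, no projection operator needs to be analyzed, and the role of LICQ is isolated to producing the single auxiliary index $m$. What the paper's route buys is that $P_\Phi(\be_m)$ works uniformly without singling out a companion coordinate, which is the form in which the argument generalizes to Calamai and Mor\'e's setting of general polyhedral constraints. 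Both proofs are complete; yours is a legitimate and arguably cleaner alternative for this specific constraint geometry.
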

\begin{proof}
Item~(i) trivially follows from the lower semicontinuity of $\Vert \nabla_\Omega f(\bx) \Vert$.
%
%

Item (ii) extends \cite[Theorem 4.1]{CalamaiMore:1987} to degenerate stationary points
that satisfy Assumption~\ref{licq}. We first note that, since $\left\{ \bx^k \right\}$
converges to $\bx^*$, we have $\mathcal{F}^*\subseteq\mathcal{F}^k$
and hence $\mathcal{A}^k\subseteq\mathcal{A}^*$ for all $k$ sufficiently large.
The proof is by contradiction. Assume that there is an index $m$ and an infinite set 
$K\subseteq\mathbb{N}$ such that $ m \in \mathcal{A}_N^*\setminus\mathcal{A}^k$
for all $k\in K$. Without loss of generality, we assume $x^*_m = u_m$ and thus 
$\lambda^*_m<0$.
Let $P_{\Phi}$ be the orthogonal projection onto
$$
\Phi = \left\lbrace \bv \in \Re^n \, : \, \bq^T \bv=0 \; \wedge \;
\left( \be_i^T \bv = 0 \;\, \forall \, i \in \mathcal{A}^*, \, i \ne m \right) \right\rbrace .
$$
Assumption~\ref{licq} implies that ${P}_{\Phi}(\be_m)\neq 0$.
%
%
Since $m \notin\mathcal{A}(\bx^k)$,
it is $P_{\Phi}(\be_m)\in T_\Omega(\bx^k)$.
%
%
Then, by~\cite[Lemma~3.1]{CalamaiMore:1987},
%
%
$$
   \nabla f (\bx^k)^T\,P_{\Phi}(\be_m) \geq - \left\Vert
   \nabla_\Omega f(\bx^k)\right\Vert\;\left\Vert P_{\Phi}(\be_m) \right\Vert,
$$
and since $\left\lbrace \bx^k\right\rbrace$ converges to 
$\bx^*$ and $\left\{\left\Vert \nabla_\Omega f(\bx^k)\right\Vert\right\}$ converges to $0$,
we have
$$
\nabla f (\bx^*)^T\,{P}_{\Theta}(\be_m) \geq 0.
$$
On the other hand, by \eqref{slbqp_kkt} and the definition of $P_{\Phi}$ we get
$$
   \nabla f(\bx^*)^T P_{\Phi}(\be_m) = 
   \sum_{i \in \mathcal{A}^*} \lambda_i^* \be_i^T P_{\Phi}(\be_m) + 
   \theta^* \bq^T P_{\Phi}(\be_m)  =
   \lambda_m^*\be_m^T \, P_\Phi(\be_m) < 0,
$$
\par \vskip -5pt \noindent
where the last inequality derives from $\lambda^*_m<0$ and $(\be_m)^T
{P}_{\Phi}(\be_m) = \Vert {P}_{\Phi}(\be_m)\Vert^2 > 0$.
The contradiction proves that the set $K$ is finite, and hence $m \in \mathcal{A}^k$
for all $k$ sufficiently large.
\end{proof}
\par \smallskip
\noindent
By Theorem~\ref{th:convres_calamaimore_gptozero},
if an algorithm is able to drive the projected gradient toward zero, then it is able
to identify the active variables that are nondegenerate at the solution in a finite number of iterations.


\section{Proportionality\label{sec:proportioning}}

A critical issue about a two-phase method like GPCG stands
in the approximate minimization of $f(\bx)$ in the reduced space defined
according to the working set inherited from the GP iterations. This is an
unconstrained minimization phase in which the precision required should
depend on how much that space is worth to be explored. For strictly convex BQPs,
Dost\'al introduced the concept of proportional iterate~\cite{Dostal:1997,Dostal:2005},
based on the ratio between a measure of optimality within the reduced
space and a measure of optimality in the complementarity space. Similar ideas
have been discussed in \cite{Friedlander:1989,Friedlander:1994b,Bielschowsky:1997}.
According to~\cite{Dostal:1997}, $\bx^k$ is called proportional if, for a suitable
constant $\Gamma>0$,
\begin{equation} \label{proportioning}	
    \Vert \bbeta(\bx^k) \Vert_\infty\le \Gamma \Vert \bphi(\bx^k) \Vert,
\end{equation}
where $\bphi(\bx)$ and $\bbeta(\bx)$ are the so-called free and chopped gradients,
respectively, defined componentwise as
$$
   \varphi_i(\bx)  :=  \left\{ \!\! \begin{array}{cl}
	                \nabla f_i(\bx) & \mbox{if } \; i \in  \mathcal{F}(\bx), \\
			 0  & \mbox{if } \; i \in  \mathcal{A}_l(\bx), \\
			 0  & \mbox{if } \; i \in  \mathcal{A}_u(\bx),
			\end{array} 
\right. \quad
\beta_i(\bx)  :=  \left\{ \!\! \begin{array}{cl}
			 0  &\mbox{if } \; i \in  \mathcal{F}(\bx), \\
			 \min \{0,\nabla f_i(\bx) \}  & \mbox{if } \; i \in  \mathcal{A}_l(\bx), \\
			 \max \{0,\nabla f_i(\bx) \} & \mbox{if } \; i \in  \mathcal{A}_u(\bx).
			\end{array} 
\right.
$$
We note that $\bx^*$ is stationary for the BQP problem if and only if
$$
\Vert \bbeta(\bx^*)\Vert + \Vert\bphi(\bx^*)\Vert = 0;
$$
Furthermore, when the Hessian of the objective function is positive definite,
disproportionality of $\bx^k$  guarantees that the solution of the BQP problem
does not belong to the face determined by the active variables at $\bx^k$,
and thus exploration of that face is stopped.

In the remainder of this section, to measure the violation of the KKT
conditions~\eqref{slbqp_kkt_compwise}-\eqref{slbqp_kkt_compwise_2}
and to balance optimality between free and active variables, we give suitable
generalizations of the free and chopped gradient for the SLBQPs.
%
%
As in~\cite{Dostal:1997}, we exploit the free and the chopped gradient to decide when
to terminate minimization in the reduced space, and to state finite convergence for strictly
convex quadratic problems even in case of degeneracy at the solution. For simplicity,
in the sequel we adopt the same notation used for BQPs.

We start by defining the free gradient $\bphi(\bx)$ at $\bx \in \Omega$
for the SLBQP problem.
\smallskip
\begin{definition}
For any $\bx \in \Omega$, the free gradient $\bphi(\bx)$ is defined as follows:
$$
   \varphi_i(\bx) := 
   \left\lbrace \!\! \begin{array}{cl}
	h_i(\bx) & \mbox{if } \; i \in \mathcal{F}(\bx),\\
	0           & \mbox{if } \; i \in \mathcal{A}(\bx),
   \end{array}  \right.
$$
where $\bh(\bx)$ is given in~\eqref{defh}.
\end{definition}
\par \smallskip \noindent
We note that \\[-18pt]
\begin{equation}\label{phi_F as projection}
\bphi_{\mathcal{F}}(\bx) = P_{\{\bq_{\mathcal{F}} \}^{\bot}}
\left(\nabla f_{\mathcal{F}}(\bx)\right),
\end{equation}
where $\mathcal{F} = \mathcal{F}(\bx)$ and $P_{\{\bq_{\mathcal{F}} \}^{\bot}} \! \in 
\Re^{\vert\mathcal{F}\vert \times \vert\mathcal{F}\vert}$
is the orthogonal projection onto the subspace of $\Re^{\vert\mathcal{F}\vert}$
orthogonal to $\bq_\mathcal{F}$
(i.e., the nullspace of $\bq_\mathcal{F}^T$),
$$
P_{\{\bq_{\mathcal{F}} \}^{\bot}} = I -
\frac{\bq_\mathcal{F} \, \bq_\mathcal{F}^T}{{\bq_\mathcal{F}^T \, \bq_\mathcal{F}}}.
$$
The following theorems state some properties of $\bphi(\bx)$, including its
relationship with the projected gradient.
\par \smallskip
\begin{theorem} \label{meas_optim_red_space}
Let $\bx \in \Omega$.~Then $\bphi(\bx) = 0$ if and only if $\bx$ is a stationary point~for
\begin{equation} \label{minomega}
\begin{array}{rl}
\min              & f(\bu), \\[2pt]
\mbox{ s.t.}   & \bu \in \Omega(\bx).
\end{array}	
\end{equation}
\end{theorem}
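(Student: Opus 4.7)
The plan is to view problem \eqref{minomega} as an unconstrained minimization over the affine subspace $\Omega(\bx)$, characterize stationarity in terms of the tangent space $\Omega_0(\bx)$ defined in \eqref{omega0x}, and then connect this to the projection formula \eqref{phi_F as projection} for $\bphi_{\mathcal{F}}$.

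First I would observe that $\bx\in\Omega\subseteq\Omega(\bx)$, and that $\Omega(\bx)$ is affine with direction subspace $\Omega_0(\bx)$. A standard first-order optimality argument for minimization over an affine subspace shows that $\bx$ is stationary for \eqref{minomega} if and only if $\nabla f(\bx)^T\bv=0$ for every $\bv\in\Omega_0(\bx)$. Since vectors $\bv\in\Omega_0(\bx)$ have $v_i=0$ for $i\in\mathcal{A}(\bx)$ and satisfy $\bq_{\mathcal{F}}^T\bv_{\mathcal{F}}=0$, this condition is equivalent to
$$
   \nabla f_{\mathcal{F}}(\bx)^T\,\bv_{\mathcal{F}}=0 \quad \mbox{for every } \bv_{\mathcal{F}}\in\{\bq_{\mathcal{F}}\}^\perp,
$$
i.e., to $\nabla f_{\mathcal{F}}(\bx)\in\mathrm{span}(\bq_{\mathcal{F}})$.

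Next I would use the projection representation \eqref{phi_F as projection}. When $\bq_{\mathcal{F}}\ne\bzero$, the condition $\nabla f_{\mathcal{F}}(\bx)\in\mathrm{span}(\bq_{\mathcal{F}})$ is exactly $P_{\{\bq_{\mathcal{F}}\}^{\bot}}\nabla f_{\mathcal{F}}(\bx)=\bzero$, i.e., $\bphi_{\mathcal{F}}(\bx)=\bzero$, and in that case the scalar $\rho(\bx)$ defined in \eqref{defrho} is exactly the coefficient that makes $\nabla f_{\mathcal{F}}(\bx)=\rho(\bx)\bq_{\mathcal{F}}$, so equivalently $\bh_{\mathcal{F}}(\bx)=\bzero$. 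The degenerate case $\bq_{\mathcal{F}}=\bzero$ (including $\mathcal{F}=\emptyset$) needs a small separate check: by convention $\rho(\bx)=0$, so $\bphi_{\mathcal{F}}(\bx)=\nabla f_{\mathcal{F}}(\bx)$, and the tangent space $\Omega_0(\bx)$ reduces to $\{\bv : v_i=0\,\forall i\in\mathcal{A}(\bx)\}$, so stationarity over $\Omega(\bx)$ is exactly $\nabla f_{\mathcal{F}}(\bx)=\bzero$, again matching $\bphi(\bx)=\bzero$.

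Finally, since $\varphi_i(\bx)=0$ for $i\in\mathcal{A}(\bx)$ by definition, the full gradient $\bphi(\bx)$ vanishes iff $\bphi_{\mathcal{F}}(\bx)$ does, giving the desired equivalence. The proof is essentially an exercise in linear algebra; the only mildly delicate point is the bookkeeping at the boundary case $\bq_{\mathcal{F}}=\bzero$, where the conventions in \eqref{defrho} must be used to keep all expressions well defined. I expect this case distinction to be the only real obstacle, and it can be dispatched in a single line.
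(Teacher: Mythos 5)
Your proposal is correct and follows essentially the same route as the paper: both arguments reduce stationarity on the affine set $\Omega(\bx)$ to the condition $\nabla f_{\mathcal{F}}(\bx)\in\mathrm{span}(\bq_{\mathcal{F}})$ (the paper via Lagrange multipliers, you via orthogonality to the direction subspace $\Omega_0(\bx)$, which is the same computation) and then identify this with $\bphi_{\mathcal{F}}(\bx)=\bzero$ through \eqref{defrho} and \eqref{phi_F as projection}. Your explicit treatment of the boundary case $\bq_{\mathcal{F}}=\bzero$ is a small point of extra care that the paper's proof leaves implicit.
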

\vskip -5pt
\begin{proof}
Because of Definition 3.1, $\bphi(\bx)=0$ if and only if
\begin{equation}
\label{phieq0}
\nabla f_i(\bx) - \rho(\bx) \, q_i =0  \quad \forall \, i \in \mathcal{F}(\bx).
\end{equation}
On the other hand, $\bx$ is a stationary point for problem~\eqref{minomega} if and only if
$\nabla f(\bx) = \sum_{i \in \mathcal{A}(\bar\bx)}\nu_i \be_i + \mu \, \bq$,
with $\nu_i, \mu \in \Re$, which implies
\begin{eqnarray} \label{cond1} 
\nabla f_i (\bx) = \mu \, q_i  \quad \forall \, i  \in \mathcal{F}(\bar\bx).
\end{eqnarray}
The thesis follows by comparing~\eqref{phieq0} and~\eqref{cond1}.
\end{proof}

\par\smallskip
\begin{remark} \label{remark_phi}
Theorem~\ref{meas_optim_red_space} shows that $\bphi(\bx)$ can be considered as
a measure of optimality within the reduced space determined by the active variables at $\bx$.
\end{remark}

\par \smallskip
\begin{theorem}\label{lemma:phi_proj_g_par}
For any $\bx \in \Omega$, $\bphi(\bx)$ is the orthogonal projection
of $-\nabla_\Omega f(\bx)$ onto $\Omega_0 (\bx)$, where $\Omega_0 (\bx)$
is given in~\eqref{omega0x}. Furthermore, \\[-15pt]
\begin{equation}\label{free_projected}
    \Vert \bphi(\bx) \Vert^2 = - ( \nabla_\Omega f(\bx))^T\bphi(\bx).
\end{equation}	
\end{theorem}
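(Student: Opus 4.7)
The plan is to prove the projection identity by verifying the two defining conditions: (a) $\bphi(\bx) \in \Omega_0(\bx)$, and (b) the residual $-\nabla_\Omega f(\bx) - \bphi(\bx)$ lies in $\Omega_0(\bx)^\perp$. Since $\Omega_0(\bx)$ is a linear subspace, these two properties uniquely characterize $\bphi(\bx)$ as $P_{\Omega_0(\bx)}(-\nabla_\Omega f(\bx))$. The second equality \eqref{free_projected} will then fall out for free: once $\bphi(\bx) \in \Omega_0(\bx)$ and $-\nabla_\Omega f(\bx) - \bphi(\bx) \perp \Omega_0(\bx)$, taking the inner product of the residual against $\bphi(\bx)$ gives $\bphi(\bx)^T(-\nabla_\Omega f(\bx)) = \|\bphi(\bx)\|^2$.

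Step (a) is immediate from Definition~3.1 and~\eqref{phi_F as projection}: $\bphi_\mathcal{A}(\bx)=\bzero$ by construction, and $\bq_{\mathcal{F}}^T \bphi_{\mathcal{F}}(\bx)=0$ because $\bphi_\mathcal{F}$ is obtained by projecting $\nabla f_\mathcal{F}(\bx)$ onto $\{\bq_\mathcal{F}\}^\perp$. Hence $\bq^T\bphi(\bx) = \bq_\mathcal{F}^T \bphi_\mathcal{F}(\bx) = 0$, so $\bphi(\bx) \in \Omega_0(\bx)$.

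For step (b), the cleanest approach is to split the residual as
\[
-\nabla_\Omega f(\bx) - \bphi(\bx) = \bigl(-\nabla f(\bx) - \nabla_\Omega f(\bx)\bigr) + \bigl(\nabla f(\bx) - \bphi(\bx)\bigr),
\]
and show each summand belongs to $\Omega_0(\bx)^\perp$. The second piece is easy by inspection of components: $\nabla f(\bx) - \bphi(\bx)$ equals $\nabla f_i(\bx)$ on $\mathcal{A}(\bx)$ and $\rho(\bx)q_i$ on $\mathcal{F}(\bx)$, so its inner product with any $\bv \in \Omega_0(\bx)$ vanishes since $\bv_\mathcal{A} = \bzero$ and $\bq_\mathcal{F}^T \bv_\mathcal{F} = \bq^T \bv = 0$. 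The first piece uses the definition~\eqref{pg_grad_minprob}: $\nabla_\Omega f(\bx)$ is the orthogonal projection of $-\nabla f(\bx)$ onto the closed convex cone $T_\Omega(\bx)$, hence the residual $-\nabla f(\bx) - \nabla_\Omega f(\bx)$ belongs to the polar cone $T_\Omega(\bx)^\circ$.

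The only step requiring real care is converting this polar-cone membership into orthogonality with $\Omega_0(\bx)$. For this I will use that $\Omega_0(\bx) \subseteq T_\Omega(\bx)$ (obvious from the definitions, since $v_i = 0$ satisfies both $v_i \geq 0$ on $\mathcal{A}_l(\bx)$ and $v_i \leq 0$ on $\mathcal{A}_u(\bx)$) and that $\Omega_0(\bx)$ is a subspace: if $\bw \in T_\Omega(\bx)^\circ$, then for every $\bv \in \Omega_0(\bx)$ both $\bv$ and $-\bv$ lie in $T_\Omega(\bx)$, forcing $\bw^T\bv = 0$. Thus $T_\Omega(\bx)^\circ \subseteq \Omega_0(\bx)^\perp$, and the first summand lies in $\Omega_0(\bx)^\perp$ as needed. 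This is the conceptual crux — the rest is bookkeeping — and it mirrors the standard decomposition of the projected gradient into its free and chopped parts familiar from the BQP setting.
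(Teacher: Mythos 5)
Your proof is correct and follows essentially the same route as the paper's: both verify that $\bphi(\bx)$ lies in the subspace $\Omega_0(\bx)$ and that the residual $-\nabla_\Omega f(\bx)-\bphi(\bx)$ is orthogonal to it, using the optimality conditions of the cone projection defining $\nabla_\Omega f(\bx)$. The only cosmetic difference is that the paper writes out the Lagrange multipliers of \eqref{pg_grad_minprob} explicitly and shifts them by $\rho(\bx)$ and $h_i(\bx)$ to exhibit the residual as $\sigma\bq+\btau$, whereas you package the same information as the residual-in-polar-cone property of cone projections together with the observation that $T_\Omega(\bx)^\circ\subseteq\Omega_0(\bx)^\perp$.
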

\begin{proof}
By the definition of projected gradient (see~\eqref{pg_grad_minprob}), \\[-13pt]
\begin{eqnarray}
   & & \qquad \;\;\, (\nabla_\Omega f (\bx) )^T \bq = 0,  \label{pg_prob_kkt2} \\
   & & \nabla_\Omega f (\bx)  = -\nabla f (\bx) + \nu \,\bq + \bmu \label{pg_prob_kkt}
\end{eqnarray}
for some $ \nu \in \Re$ and $\bmu \in \Re^n$, with
$$
\mu_i=0 \mbox{ if } i \in \mathcal{F}(\bx), \quad
\mu_i \geq 0 \mbox{ if } i \in \mathcal{A}_l(\bx), \quad
\mu_i \leq 0 \mbox{ if }  i \in \mathcal{A}_u(\bx).
$$
\par \vskip -8pt
\noindent Let \\[-20pt]
$$
\sigma = \nu - \rho(\bx), \quad 
\tau_i = \mu_i - h_i(\bx) \; \mbox{ if } i \in \mathcal{A}(\bx), \quad
\tau_i = 0 \; \mbox{ if } i \in \mathcal{F}(\bx),
$$
where $\rho(\bx)$ and $\bh(\bx)$ are given in~\eqref{defrho}
and \eqref{defh}, respectively. Then \eqref{pg_prob_kkt} can be written as \\[-18pt]
\begin{eqnarray*}
h_i (\bx) & = & - (\nabla_{\Omega} f)_i(\bx)+ \sigma q_i + \tau_i
        \quad \mbox{if } i \in \mathcal{F}(\bx), \\
0 & = &  - (\nabla_{\Omega} f)_i(\bx) +  \sigma q_i + \tau_i \quad \mbox{if } i \in \mathcal{A}(\bx),
\end{eqnarray*}
\par \vskip -6pt
\noindent or, equivalently, \\[-18pt]
\begin{equation} \label{phi_proj_kkt}
\bphi(\bx) = - \nabla_{\Omega} f(\bx) + \sigma \bq + \btau, 
\end{equation}
with $\tau_i = 0$ if $i \in \mathcal{F}(\bx)$.
%
%
This, with \eqref{pg_prob_kkt2} and $\varphi_i = 0$ for $i \in \mathcal{A}(\bx)$,
proves that
$$
    \bphi(\bx) = 
    \argmin \left\{ \| \bv + \nabla_\Omega f(\bx) \| \; \mbox{ s.t.} \; \bv \in \Omega_0(\bx) \right\},
$$
which is the first part of the thesis. Equation~\eqref{free_projected} follows
from~\eqref{phi_proj_kkt} and the definition of $\bphi(\bx)$.
%
%
\end{proof}

\par\smallskip
\begin{theorem} \label{lemma:phi_binding}
Let $ \bx \in \Omega$. Then $\mathcal{A}(\bx)=\mathcal{B}(\bx)$ if and only if
\begin{equation} \label{AeqB}
     \varphi(\bx)= - \nabla_\Omega f(\bx).
\end{equation}
\end{theorem}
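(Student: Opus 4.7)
My plan is to reduce the claimed equivalence to a subspace-membership test via Theorem~\ref{lemma:phi_proj_g_par}, and then to link that test to $\mathcal{A}(\bx)=\mathcal{B}(\bx)$ through the KKT characterization of the quadratic program~\eqref{pg_grad_minprob} defining $\nabla_\Omega f(\bx)$. The starting observation is that $\bphi(\bx)\in\Omega_0(\bx)$: its $\mathcal{A}(\bx)$-components vanish by construction, and $\bq^T\bphi(\bx)=\bq_{\mathcal{F}(\bx)}^T\bphi_{\mathcal{F}(\bx)}(\bx)=0$ by~\eqref{phi_F as projection}. Since Theorem~\ref{lemma:phi_proj_g_par} identifies $\bphi(\bx)$ with the orthogonal projection of $-\nabla_\Omega f(\bx)$ onto the subspace $\Omega_0(\bx)$, and a projection onto a subspace coincides with its input precisely when the input already lies in the subspace, combining this with~\eqref{pg_prob_kkt2} (which ensures $\bq^T\nabla_\Omega f(\bx)=0$ automatically) reduces the task to proving
\[
 (\nabla_\Omega f(\bx))_i = 0 \;\; \forall\, i \in \mathcal{A}(\bx)  \;\;\Longleftrightarrow\;\;  \mathcal{A}(\bx) = \mathcal{B}(\bx).
\]

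For the $(\Leftarrow)$ direction I would exhibit $-\bphi(\bx)$ as the projected gradient $\nabla_\Omega f(\bx)$ by verifying the full set of KKT conditions for~\eqref{pg_grad_minprob}: primal feasibility $-\bphi(\bx)\in T_\Omega(\bx)$ is immediate (the active components vanish and $\bq^T\bphi(\bx)=0$ by Step~1); the Lagrangian stationarity~\eqref{pg_prob_kkt} holds with multipliers $\nu=\rho(\bx)$, $\mu_i=h_i(\bx)$ on $\mathcal{A}(\bx)$ and $\mu_i=0$ on $\mathcal{F}(\bx)$ by direct substitution using $\bh(\bx)=\nabla f(\bx)-\rho(\bx)\bq$; the sign requirements on $\bmu$ are exactly the condition $\mathcal{A}(\bx)=\mathcal{B}(\bx)$; and complementarity is vacuous since the active components of $-\bphi(\bx)$ vanish. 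Uniqueness of the projection then yields $\nabla_\Omega f(\bx)=-\bphi(\bx)$. For the $(\Rightarrow)$ direction, I would plug $\nabla_\Omega f(\bx)=-\bphi(\bx)$ into~\eqref{pg_prob_kkt}: matching the $\mathcal{F}(\bx)$-components identifies $\nu$ with $\rho(\bx)$, the $\mathcal{A}(\bx)$-components then force $\mu_i=h_i(\bx)$, and the KKT sign constraints on these multipliers translate directly to $h_i\geq 0$ on $\mathcal{A}_l(\bx)$ and $h_i\leq 0$ on $\mathcal{A}_u(\bx)$, which is $\mathcal{A}(\bx)=\mathcal{B}(\bx)$.

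The main technical obstacle I foresee is the fully degenerate configuration $\bq_{\mathcal{F}(\bx)}=\bzero$, in which $\rho(\bx)$ is fixed to zero by the convention in~\eqref{defrho} while the multiplier $\nu$ is no longer pinned down by the $\mathcal{F}$-equations in the $(\Rightarrow)$ direction. This case is precisely what Assumption~\ref{licq} excludes, as noted earlier in the paper (LICQ forces $\bq_{\mathcal{F}(\bx)}\neq\bzero$); under LICQ the identification $\nu=\rho(\bx)$ is automatic and the argument proceeds cleanly, while in the generic case one has to perform a short separate check using the explicit form of $\rho(\bx)$ to conclude.
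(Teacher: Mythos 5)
Your argument is essentially the paper's own proof: the $(\Leftarrow)$ direction exhibits $-\bphi(\bx)$ as the solution of the projection problem \eqref{pg_grad_minprob} with multipliers $\nu=\rho(\bx)$ and $\mu_i=h_i(\bx)$ on $\mathcal{A}(\bx)$, and the $(\Rightarrow)$ direction matches the $\mathcal{F}(\bx)$-components to force $\nu=\rho(\bx)$ and then reads off the sign conditions on $h_i(\bx)$; your preliminary reduction to the membership test $-\nabla_\Omega f(\bx)\in\Omega_0(\bx)$ is a harmless repackaging of the same computation. So the proposal is correct wherever the paper's proof is.

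One comment on your final paragraph: the obstacle you flag at $\bq_{\mathcal{F}(\bx)}=\bzero$ is real, and it afflicts the paper's proof in exactly the same place (the step ``from \eqref{AeqB1} we get $\rho(\bx)=\nu$'' is vacuous when $\bq_{\mathcal{F}(\bx)}=\bzero$). But your hope that ``a short separate check'' rescues the statement there is too optimistic: the $(\Rightarrow)$ direction can genuinely fail. For instance, with $n=2$, $\bq=\be_1$, $b=0$, $l_1=0$, one has $T_\Omega(\bx)=\{\bv: v_1=0\}$ for every feasible $\bx$, so $-\nabla_\Omega f(\bx)=(0,\nabla f_2(\bx))=\bphi(\bx)$ holds identically, while $\mathcal{A}(\bx)=\mathcal{B}(\bx)$ requires $\nabla f_1(\bx)\ge 0$. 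The theorem thus implicitly needs $\bq_{\mathcal{F}(\bx)}\ne\bzero$; note also that Assumption~\ref{licq} only guarantees this at stationary points, not at an arbitrary $\bx\in\Omega$ as in the statement, so invoking it does not fully close the gap either. The $(\Leftarrow)$ direction, by contrast, survives the degenerate case, since it only needs \emph{some} admissible multiplier pair, and $(\rho(\bx),\bh_{\mathcal{A}}(\bx))$ always works.
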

\par \vskip -11pt
\begin{proof}
Assume that $\mathcal{A}(\bx)=\mathcal{B}(\bx)$.
Proving~\eqref{AeqB} means showing that
\begin{equation} \label{}
   - \bphi(\bx) = \argmin \left\lbrace \Vert \bv +
   \nabla f(\bx)\Vert \; \mbox{ s.t.} \; \bv\in T_\Omega(\bx) \right\rbrace.
\end{equation}
Since, by Theorem~\ref{lemma:phi_proj_g_par}, $-\bphi(\bx) \in \Omega_0(\bx)$,
we need only to prove that
$$
    -\bphi (\bx) = -\nabla f (\bx) + \nu \,\bq + \bmu,
$$
for some $ \nu \in \Re$ and $\bmu \in \Re^n$, with $ \mu_i=0 $ if $i \in \mathcal{F}$,
$\mu_i \geq 0 $ if $ i \in \mathcal{A}_l(\bx)$, $\mu_i \leq 0 $ if $ i \in \mathcal{A}_u(\bx)$.
Since $ \mathcal{A}(\bx) = \mathcal{B}(\bx)$, the previous equality holds by setting
$\nu = \rho(\bx)$, $\mu_i = h_i(\bx)$ for $i \in \mathcal{A}(\bx)$, and $\mu_i = 0$ otherwise.

Now we suppose that~\eqref{AeqB} holds. From the definition of $\bphi$ and \eqref{pg_prob_kkt},
it follows that~\eqref{AeqB} can be written as
\begin{eqnarray}
   \varphi_i(\bx)=\nabla f_i(\bx) - \rho(\bx) q_i  =
   \nabla f_i (\bx) - \nu \, q_i \quad \forall \, i \in  \mathcal{F}(\bx),  \label{AeqB1} \\
   0=\nabla f_i (\bx) - \nu \, q_i - \mu_i \quad \forall \, i \in  \mathcal{A}(\bx),\label{AeqB2}
\end{eqnarray}
with $\mu_i \geq 0$ if $i \in \mathcal{A}_l(\bx)$ and  $\mu_i \leq 0$ if $i \in \mathcal{A}_u(\bx)$.
From \eqref{AeqB1} we get $\rho(\bx )= \nu$, and then, from \eqref{AeqB2} and the
definition of $h(\bx)$,
$$
h_i (\bx) \geq 0 \; \mbox{ if } i \in \mathcal{A}_l(\bx), \quad
h_i (\bx) \leq 0 \; \mbox{ if } i \in \mathcal{A}_u(\bx);
$$
thus $\mathcal{A}(\bx) = \mathcal{B}(\bx)$.
\end{proof}

\par \smallskip
Inspired by the two previous lemmas, we give the following definition.
\par \smallskip
\begin{definition}
For any $\bx \in \Omega$, the chopped gradient $\bbeta(\bx)$ is defined as \\[-12pt]
\begin{equation}
\label{definition_beta}
\bbeta(\bx) := - \nabla_\Omega f(\bx) - \bphi(\bx).
\end{equation}
\end{definition}
\par \vskip -12pt
\begin{remark} \label{remark_beta}
Because of Theorem~\ref{lemma:phi_binding}, $\bbeta(\bx) = 0$ if and only if 
$\mathcal{A}(\bx) = \mathcal{B}(\bx)$. Thus, $\bbeta(\bx)$ can be
regarded as a ``measure of bindingness'' of the active variables at~$\bx$.
\end{remark}
\par \smallskip \noindent
Some properties of $\bbeta(\bx)$ are given next.
\par\smallskip
\begin{theorem}
For any $\bx \in \Omega$, $ \bbeta(\bx) $ has the following properties: 
\begin{eqnarray}
\bbeta(\bx) \perp \bphi(\bx), \quad \; \bbeta(\bx) \perp \bq, \label{beta_prop}\\
-\bbeta(\bx) \in T_{\Omega}(\bx). \label{beta_prop1}
\end{eqnarray}
\end{theorem}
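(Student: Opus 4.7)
The plan is to exploit the decomposition established in the proof of Theorem~\ref{lemma:phi_proj_g_par}, namely equation~\eqref{phi_proj_kkt}, $\bphi(\bx) = -\nabla_\Omega f(\bx) + \sigma \bq + \btau$, where $\tau_i = 0$ for $i \in \mathcal{F}(\bx)$. Substituting into Definition~3.2 of $\bbeta$ immediately yields the compact representation
$$
\bbeta(\bx) = -\sigma\bq - \btau,
$$
which will drive every subsequent computation. In particular, $\bbeta_i = -\sigma q_i$ for $i \in \mathcal{F}(\bx)$, so $\bbeta$ agrees with a scalar multiple of $\bq$ on the free indices.

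For the orthogonality relations \eqref{beta_prop}, I would proceed in two short steps. To show $\bbeta(\bx) \perp \bq$, I note that equation~\eqref{pg_prob_kkt2} gives $\bq^T\nabla_\Omega f(\bx) = 0$, while equation~\eqref{phi_F as projection} gives $\bq_\mathcal{F}^T \bphi_\mathcal{F}(\bx) = 0$; since $\varphi_i=0$ on $\mathcal{A}(\bx)$ this extends to $\bq^T \bphi(\bx) = 0$, and adding these two identities and using the definition of $\bbeta$ proves the claim. For $\bbeta(\bx) \perp \bphi(\bx)$, I use that $\varphi_i = 0$ on $\mathcal{A}(\bx)$, which reduces the inner product to its free part: $\bbeta(\bx)^T \bphi(\bx) = \bbeta_\mathcal{F}(\bx)^T \bphi_\mathcal{F}(\bx) = -\sigma\, \bq_\mathcal{F}^T \bphi_\mathcal{F}(\bx) = 0$, again by \eqref{phi_F as projection}.

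For~\eqref{beta_prop1}, I would verify the three defining conditions of the tangent cone $T_\Omega(\bx)$ for the vector $-\bbeta(\bx) = \nabla_\Omega f(\bx) + \bphi(\bx)$. The condition $\bq^T(-\bbeta(\bx)) = 0$ is a reformulation of what has just been established. For the sign conditions on the active indices, I observe that $\varphi_i = 0$ for $i \in \mathcal{A}(\bx)$, so $-\beta_i = (\nabla_\Omega f(\bx))_i$ there; the required inequalities $-\beta_i \geq 0$ for $i \in \mathcal{A}_l(\bx)$ and $-\beta_i \leq 0$ for $i \in \mathcal{A}_u(\bx)$ then follow at once from the fact that $\nabla_\Omega f(\bx) \in T_\Omega(\bx)$ by its very definition~\eqref{pg_grad_minprob}.

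There is no genuine obstacle here: once the decomposition $\bbeta(\bx) = -\sigma \bq - \btau$ is in place, each claim reduces to a one- or two-line manipulation. The only care needed is bookkeeping, namely keeping the index sets $\mathcal{F}(\bx)$ and $\mathcal{A}(\bx)$ separate and treating $\sigma$ purely as an intermediate scalar whose value is never required.
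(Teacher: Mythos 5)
Your proof is correct and follows essentially the same route as the paper: both rest on Theorem~\ref{lemma:phi_proj_g_par} (you use its intermediate decomposition \eqref{phi_proj_kkt} to write $\bbeta(\bx)=-\sigma\bq-\btau$ and deduce $\bbeta(\bx)^T\bphi(\bx)=-\sigma\,\bq_{\mathcal{F}}^T\bphi_{\mathcal{F}}(\bx)=0$, whereas the paper invokes its consequence \eqref{free_projected}; the difference is cosmetic). Your explicit verification of the sign conditions for $-\bbeta(\bx)\in T_\Omega(\bx)$, via $\varphi_i=0$ on $\mathcal{A}(\bx)$ and $\nabla_\Omega f(\bx)\in T_\Omega(\bx)$, correctly fills in what the paper dismisses as trivial.
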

\vskip -16pt
\begin{proof}
Since 
$$
\bbeta(\bx)^T \bphi(\bx) = \left(-\nabla_\Omega f (\bx)- \bphi(\bx) \right)^T \! \bphi(\bx)
= (-\nabla_\Omega f(\bx))^T \bphi(\bx) - \bphi(\bx)^T\bphi(\bx),
$$
the first orthogonality condition in  \eqref{beta_prop} follows from \eqref{free_projected}. 
The second one follows from \\[-20pt]
$$ 
\bbeta(\bx)^T \bq = (-\nabla_\Omega f(\bx))^T \bq - \bphi(\bx)^T \bq,
$$
by observing that $\nabla_\Omega f(\bx)$ and $ \bphi(\bx)$ are orthogonal to $\bq$.
Finally, \eqref{beta_prop1} trivially follows from Theorem~\ref{lemma:phi_proj_g_par} and
the definition of $\nabla_{\Omega}f(\bx)$.
\end{proof}

\smallskip
\begin{theorem} \label{lemma:grad_beta_norma_beta}
For any $\bx \in \Omega$, $\Vert \bbeta(\bx) \Vert^2 = \nabla f(\bx)^T\bbeta(\bx)$.
\end{theorem}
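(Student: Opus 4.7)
The plan is to expand $\|\bbeta(\bx)\|^2 = \bbeta(\bx)^T \bbeta(\bx)$ by substituting the definition $\bbeta(\bx) = -\nabla_\Omega f(\bx) - \bphi(\bx)$ into one of the two factors, obtaining
$$
\|\bbeta(\bx)\|^2 = -\bbeta(\bx)^T \nabla_\Omega f(\bx) - \bbeta(\bx)^T \bphi(\bx).
$$
By the orthogonality relation $\bbeta(\bx) \perp \bphi(\bx)$ established in the preceding theorem, the second term vanishes, so it remains to prove
$$
-\bbeta(\bx)^T \nabla_\Omega f(\bx) = \bbeta(\bx)^T \nabla f(\bx),
$$
i.e., $\bbeta(\bx)^T (\nabla f(\bx) + \nabla_\Omega f(\bx)) = 0$.

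Next I would invoke the KKT characterization of the projected gradient derived in the proof of Theorem~\ref{lemma:phi_proj_g_par}, namely
$$
\nabla_\Omega f(\bx) + \nabla f(\bx) = \nu\, \bq + \bmu,
$$
where $\mu_i = 0$ for $i \in \mathcal{F}(\bx)$, $\mu_i \ge 0$ for $i \in \mathcal{A}_l(\bx)$, and $\mu_i \le 0$ for $i \in \mathcal{A}_u(\bx)$, together with the standard complementarity conditions of the projection onto the convex cone $T_\Omega(\bx)$, which give $\mu_i \cdot (\nabla_\Omega f(\bx))_i = 0$ for every $i \in \mathcal{A}(\bx)$. Taking the inner product with $\bbeta(\bx)$ yields
$$
\bbeta(\bx)^T(\nabla f(\bx) + \nabla_\Omega f(\bx)) = \nu\, \bbeta(\bx)^T \bq + \bbeta(\bx)^T \bmu.
$$

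The first term on the right is zero by the orthogonality $\bbeta(\bx) \perp \bq$ from the previous theorem. For the second term, I would decompose $\bbeta(\bx)^T \bmu = \sum_{i \in \mathcal{F}(\bx)} \beta_i \mu_i + \sum_{i \in \mathcal{A}(\bx)} \beta_i \mu_i$; the first sum vanishes because $\mu_i = 0$ on $\mathcal{F}(\bx)$, and in the second sum I would use that $\varphi_i(\bx) = 0$ on $\mathcal{A}(\bx)$, so by definition $\beta_i(\bx) = -(\nabla_\Omega f(\bx))_i$, making each summand a multiple of $\mu_i (\nabla_\Omega f(\bx))_i$, which is zero by complementarity. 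Hence $\bbeta(\bx)^T \bmu = 0$, which yields the desired identity.

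The main obstacle is the treatment of $\bbeta(\bx)^T \bmu$: the orthogonality $\bbeta \perp \bq$ and $\bbeta \perp \bphi$ are available directly from the previous theorem, but handling $\bmu$ forces one to go back to the projection KKT conditions and exploit the complementary slackness satisfied by $\nabla_\Omega f(\bx)$ and $\bmu$ on the active set. Everything else is bookkeeping built on results already proved in the excerpt.
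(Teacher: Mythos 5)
Your proof is correct. The identity $\mu_i\,(\nabla_\Omega f(\bx))_i = 0$ for $i \in \mathcal{A}(\bx)$ that you invoke is indeed part of the KKT system of the convex projection problem~\eqref{pg_grad_minprob} (complementary slackness for the sign constraints defining $T_\Omega(\bx)$), so every step goes through, and the reduction $\Vert\bbeta\Vert^2 = -\bbeta^T\nabla_\Omega f$ via $\bbeta\perp\bphi$ is exactly right. Your route differs from the paper's in how the remaining term is killed. The paper starts from the aggregate identity $-(\nabla f)^T\nabla_\Omega f = \Vert\nabla_\Omega f\Vert^2$ (Lemma~3.1 of Calamai--Mor\'e, which is the Moreau-type orthogonality of the cone projection), rewrites it as $(\nabla f)^T(\bphi+\bbeta) = \Vert\bphi\Vert^2 + \Vert\bbeta\Vert^2$ using $\bbeta\perp\bphi$, and then disposes of the $\bphi$ part by an explicit computation: substituting $\bphi_{\mathcal{F}} = \nabla f_{\mathcal{F}} - \rho\,\bq_{\mathcal{F}}$ and the formula~\eqref{defrho} for $\rho$ shows $(\nabla f)^T\bphi = \Vert\bphi\Vert^2$, and subtraction gives the claim. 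You instead work directly with $\bbeta$, writing $\nabla f + \nabla_\Omega f = \nu\bq + \bmu$ and using $\bbeta\perp\bq$ together with componentwise complementary slackness on $\mathcal{A}(\bx)$ (where $\beta_i = -(\nabla_\Omega f)_i$ since $\varphi_i=0$ there). What your approach buys is that it never touches the explicit formula for $\rho(\bx)$ and, as a byproduct, shows that $\bbeta$ satisfies its own complementarity-type identity $\bbeta^T(\nabla f + \nabla_\Omega f)=0$ independently of the corresponding fact for $\bphi$; what the paper's approach buys is that it leans only on the already-cited aggregate lemma and on equation~\eqref{free_projected}, without needing to re-open the KKT conditions of the projection problem or state the componentwise complementarity, which the paper never records explicitly.
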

\begin{proof}
By \cite[Lemma~3.1]{CalamaiMore:1987}, we have
$ - (\nabla f (\bx))^T \nabla_\Omega f(\bx)
= \Vert \nabla_\Omega f (\bx) \Vert^2$, which can be written as
\begin{equation} \label{phi_beta}
(\nabla f(\bx) )^T \! \left(\bphi(\bx)  + \bbeta(\bx) \right)
= \Vert \bphi(\bx)  \Vert^2 + \Vert \bbeta(\bx)  \Vert^2
\end{equation}
by exploiting~\eqref{definition_beta} and \eqref{beta_prop}.
We note that the scalar product $(\nabla f(\bx) )^T \bphi(\bx) $ involves only the entries
corresponding to $\mathcal{F}(\bx)$. Furthermore, since $ \bphi_\mathcal{F} (\bx) = 
\nabla f_\mathcal{F}(\bx)  - \rho (\bx) \bq_\mathcal{F} $,
where $\rho(\bx)$ is given in~\eqref{defrho}, we get
\begin{eqnarray*}
(\nabla f(\bx) )^T \bphi (\bx)  & = &
\Vert \nabla f_{\mathcal{F}} (\bx) \Vert^2 - \rho(\bx) \bq_{\mathcal{F}}^T\nabla f_{\mathcal{F}}(\bx), \\
\Vert \bphi (\bx) \Vert^2 & = & \Vert \nabla f_{\mathcal{F}}(\bx) \Vert^2 - 2\,\rho (\bx) \bq_{\mathcal{F}}^T\nabla f_{\mathcal{F}}(\bx) + \rho^2 \Vert \bq_{\mathcal{F}} (\bx) \Vert^2.
\end{eqnarray*}		
By subtracting the two equations and using the expression of $\rho (\bx)$, we get
$$
(\nabla f (\bx))^T (\bx) - \Vert \bphi(\bx) \Vert^2 = 0;
$$
then the thesis follows from~\eqref{phi_beta}.
\end{proof}


\subsection{Proportional iterates for SLBQPs\label{sec:prop_iters}}

So far we managed to decompose the projected gradient $\nabla_\Omega f(\bx)$
into two parts: $-\bphi(\bx)$, which provides a measure of stationarity within the
reduced space determined by the active variables at $\bx$,
%
%
and $-\bbeta(\bx)$, which gives a measures of bindingness of the active variables
at $\bx$. With this decomposition we can apply to problem~\eqref{SLBQP}
the definition~\eqref{proportioning} of proportional iterates introduced
for the BQP case. In the strictly convex case, disproportionality of $\bx^k$ again
guarantees that the solution of~\eqref{SLBQP} does not belong to the
face identified by the active variables at $\bx^k$. This result is a consequence
of the next theorem, which generalizes Theorem~3.2 in \cite{Dostal:1997}
and is the main result of this section.
\par \smallskip
\begin{theorem}\label{th:Dostal3.2}
Let $H$ be the Hessian matrix in~\eqref{SLBQP} and let $H_q = V^T H \, V$ be
positive definite, where $V  \in \Re^{n \times  (n-1)}$ has orthonormal columns spanning
$\left\lbrace \bq \right\rbrace^\perp$.
Let $\bx\in \Omega$ be such that $\Vert \bbeta(\bx) \Vert_\infty > \kappa(H_q)^{1/2}\, 
\Vert \bphi(\bx) \Vert_2 $, and let $\bar{\bx}$ be the solution of \\[-8pt]
\begin{equation}
\label{subp_theorem}
\begin{array}{rl}
\min             & f(\bu), \\[2pt]
\mbox{\rm s.t.}   & \bu \in \Omega(\bx),
\end{array}
\end{equation}
\par \vskip -1pt
\noindent		
where $\Omega(\bx)$ is defined in~\eqref{omegax}. If $ \bar{\bx}\in\Omega$, then $\bbeta(\bar{\bx})\neq 0$.
\end{theorem}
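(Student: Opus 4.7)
The plan is to argue by contradiction: suppose $\bbeta(\bar\bx)=\bzero$ and derive a violation of the hypothesis $\|\bbeta(\bx)\|_\infty > \kappa(H_q)^{1/2}\|\bphi(\bx)\|_2$. First I would upgrade ``$\bar\bx$ minimizes $f$ on the affine face $\Omega(\bx)$'' to ``$\bar\bx$ is the unique global minimizer of~\eqref{SLBQP}''. Since $\bar\bx\in\Omega(\bx)$, we have $\mathcal{A}(\bx)\subseteq\mathcal{A}(\bar\bx)$ and hence $\Omega(\bar\bx)\subseteq\Omega(\bx)$; the first-order condition for $\bar\bx$ on the larger affine set $\Omega(\bx)$ therefore also holds on the smaller set $\Omega(\bar\bx)$, so Theorem~\ref{meas_optim_red_space} gives $\bphi(\bar\bx)=\bzero$. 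Combined with $\bbeta(\bar\bx)=\bzero$, the decomposition~\eqref{definition_beta} yields $\nabla_\Omega f(\bar\bx)=\bzero$; since $H_q$ is positive definite, $f$ is strictly convex on $\{\bv:\bq^T\bv=b\}\supseteq\Omega$, so $\bar\bx$ is the unique global minimizer of~\eqref{SLBQP}, and in particular $f(\by)\geq f(\bar\bx)$ for every $\by\in\Omega$.

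Next I would produce a two-sided comparison of $f(\bx)-f(\bar\bx)$. For the upper bound, the first-order condition $\nabla f(\bar\bx)^T(\bar\bx-\bx)=0$ (valid because $\bar\bx-\bx\in\Omega_0(\bx)$) together with $\nabla f(\bar\bx)=\nabla f(\bx)+H(\bar\bx-\bx)$ gives $f(\bar\bx)=f(\bx)-\tfrac12(\bar\bx-\bx)^T H(\bar\bx-\bx)$. Using the definition of $\bh$ and the fact that $\bq_{\mathcal{F}}^T(\bar\bx-\bx)_{\mathcal{F}}=0$ one obtains the identity $\nabla f(\bx)^T(\bar\bx-\bx)=\bphi(\bx)^T(\bar\bx-\bx)$; Cauchy--Schwarz together with $(\bar\bx-\bx)^T H(\bar\bx-\bx)\geq \zeta_{\min}(H_q)\|\bar\bx-\bx\|_2^2$ (which holds since $\bar\bx-\bx\in\{\bq\}^\perp$) then leads to $f(\bx)-f(\bar\bx)\leq \|\bphi(\bx)\|_2^2/(2\zeta_{\min}(H_q))$. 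For the lower bound, since $-\bbeta(\bx)\in T_\Omega(\bx)$ the ray $\by(\alpha)=\bx-\alpha\bbeta(\bx)$ stays in $\Omega$ for all $\alpha$ in some maximal interval $[0,\alpha_{\max}]$. Theorem~\ref{lemma:grad_beta_norma_beta} gives $\nabla f(\bx)^T\bbeta(\bx)=\|\bbeta(\bx)\|_2^2$, and $\bbeta(\bx)\perp\bq$ gives $\bbeta(\bx)^T H\bbeta(\bx)\leq \zeta_{\max}(H_q)\|\bbeta(\bx)\|_2^2$, so the one-dimensional quadratic $\alpha\mapsto f(\by(\alpha))$ is minimized at $\alpha^\star=\|\bbeta\|_2^2/(\bbeta^T H\bbeta)$ with minimum value at most $f(\bx)-\|\bbeta(\bx)\|_2^2/(2\zeta_{\max}(H_q))$. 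Provided $\alpha^\star\leq \alpha_{\max}$, global optimality of $\bar\bx$ gives $f(\bar\bx)\leq f(\by(\alpha^\star))$, and thus $f(\bx)-f(\bar\bx)\geq \|\bbeta(\bx)\|_2^2/(2\zeta_{\max}(H_q))$.

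Combining the two bounds yields $\|\bbeta(\bx)\|_2^2\leq \kappa(H_q)\|\bphi(\bx)\|_2^2$, and since $\|\bbeta(\bx)\|_\infty\leq \|\bbeta(\bx)\|_2$ this contradicts the hypothesis. The main obstacle in the above plan is the feasibility condition $\alpha^\star\leq \alpha_{\max}$, which need not hold in general: when $\bx$ lies close to a different face, the maximal feasible step along $-\bbeta(\bx)$ can be very short. I expect this is exactly why the hypothesis is stated in the $\ell_\infty$ norm, since $\alpha_{\max}$ is essentially governed by $\|\bbeta(\bx)\|_\infty^{-1}$ times the smallest relevant bound span $u_i-l_i$; in the case $\alpha^\star>\alpha_{\max}$ one must substitute $\alpha=\alpha_{\max}$, use the cruder decrease estimate $f(\bx)-f(\by(\alpha_{\max}))\geq \tfrac12\alpha_{\max}\|\bbeta(\bx)\|_2^2$, and close the contradiction by a careful bookkeeping of $\alpha_{\max}$ against $\|\bbeta(\bx)\|_\infty$. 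This is the step that will require the most care.
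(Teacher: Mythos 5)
Your upper bound $f(\bx)-f(\bar{\bx})\le \Vert\bphi(\bx)\Vert^2/(2\zeta_{min}(H_q))$ is correct and coincides with the paper's estimate, but the lower bound is where the argument genuinely breaks, and you have flagged the break without repairing it. The inequality $f(\bar{\bx})\le f(\by(\alpha^\star))$ needs $\by(\alpha^\star)\in\Omega$, and the feasibility horizon $\alpha_{\max}$ along $-\bbeta(\bx)$ cannot be controlled by $\Vert\bbeta(\bx)\Vert_\infty$ in the SLBQP setting: unlike the box-constrained case, $\bbeta(\bx)$ is not supported on $\mathcal{A}(\bx)$. From the KKT decomposition used in Theorem~\ref{lemma:phi_proj_g_par} one gets $\bbeta(\bx)=-\sigma\bq-\btau$ with $\tau_i=0$ on $\mathcal{F}(\bx)$, so the free components of $\bbeta(\bx)$ are a multiple of $\bq_{\mathcal{F}}$, and the first breakpoint along $-\bbeta(\bx)$ is governed by the distances of the \emph{free} variables from their bounds, which can be arbitrarily small under the theorem's hypotheses. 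Hence the ``careful bookkeeping of $\alpha_{\max}$ against $\Vert\bbeta(\bx)\Vert_\infty$'' that you defer to is not a technicality: no such lower bound on $\alpha_{\max}$ exists, and the $\ell_\infty$ norm in the hypothesis is not there for that purpose (it only weakens the disproportionality test via $\Vert\bbeta\Vert_2\ge\Vert\bbeta\Vert_\infty$).

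The paper sidesteps feasibility entirely. It evaluates $f$ at the possibly \emph{infeasible} point $\by=\bx-\Vert H_q\Vert^{-1}\bbeta(\bx)$ and shows algebraically, using $\nabla f(\bx)^T\bbeta(\bx)=\Vert\bbeta(\bx)\Vert^2$ and $\bbeta(\bx)\perp\bq$, that $f(\by)-f(\bx)<-\frac12\Vert H_q^{-1}\Vert\,\Vert\bphi(\bx)\Vert^2$; combined with the upper bound this gives $f(\by)<f(\bar{\bx})$. Instead of contradicting optimality of $\bar{\bx}$ over $\Omega$ (which would again require $\by\in\Omega$), it expands $f$ around $\bar{\bx}$, uses $\nabla f(\bar{\bx})^T(\bx-\bar{\bx})=0$ and positive definiteness of $H$ on $\{\bq\}^\perp$ to convert $f(\by)<f(\bar{\bx})$ into the gradient inequality $\nabla f(\bar{\bx})^T\bbeta(\bx)>0$. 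The contradiction with $\bbeta(\bar{\bx})=0$ then comes from the normal cone: $\bphi(\bar{\bx})=0$ and $\bbeta(\bar{\bx})=0$ make $\bar{\bx}$ optimal for~\eqref{SLBQP}, a case analysis on $\mathcal{A}(\bx)$ versus $\mathcal{A}(\bar{\bx})$ shows $-\nabla f(\bar{\bx})\in T_\Omega(\bx)^\circ$, and since $-\bbeta(\bx)\in T_\Omega(\bx)$ this forces $\nabla f(\bar{\bx})^T\bbeta(\bx)\le 0$. To salvage your route you should replace the function-value comparison at a feasible point with this gradient-based argument; as written, the proof has a gap that your proposed patch cannot close.
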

\par \smallskip \noindent
To prove Theorem~\ref{th:Dostal3.2}, we need the lemma given next.
\par \smallskip
\begin{lemma}\label{lemma:PqHF_pseudoinverse}
Let us consider the minimization problem
\begin{equation}\label{PROB:SmallLinConQP}
\begin{array}{rl}
\min& w(\bz) := \frac{1}{2} \bz^T\,A\,\bz - \bp^T\bz , \\
\mbox{\rm s.t.}   & \br^T\bz = s,
\end{array}	
\end{equation}
where $A \in \Re^{m \times m}$, $\bp, \br \in \Re^{m}$, $s \in\Re$, and $m \ge 1$.
Let $\Theta = \left\{ \bz\in\Re^m \, : \, \br^T\bz = s \right\}$ and
$\Theta_0 = \left\{ \bz\in\Re^m \, : \, \br^T\bz = 0 \right\}$. Let $P_{\Theta_0}$
be the orthogonal projection onto $\Theta_0$, and $U \in \Re^{m \times  (m-1)}$ a
matrix with orthonormal columns spanning $\Theta_0$. Finally, let $U^T A \, U$ be positive
definite, and $\bar{\bz}$ the solution of \eqref{PROB:SmallLinConQP}. 
Then
\begin{equation}\label{EQN:Pq_H_invertible}
\bz-\bar{\bz} = B\,P_{\Theta_0} \nabla w(\bz), \quad \forall \bz\in\Theta,
\end{equation}
where $B = U (U^T A U)^{-1} U^T$. Furthermore,
\begin{equation}\label{EQN:funcdiff_B}
   w(\bz) - w(\bar \bz)
   \leq \frac{1}{2} \, \Vert B \Vert  \Vert P_{\Theta_0} \nabla w(\bz) \Vert^2.
\end{equation}
\end{lemma}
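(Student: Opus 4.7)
The plan is to exploit the parameterization of the feasible affine subspace $\Theta$ by the orthonormal basis $U$ of $\Theta_0$: every $\bz \in \Theta$ can be written uniquely as $\bz = \bar\bz + U\by$ with $\by \in \Re^{m-1}$, since $\bar\bz \in \Theta$ and $\bz - \bar\bz \in \Theta_0$. The key is to combine the KKT optimality of $\bar\bz$ with this parameterization to express all relevant quantities in terms of the reduced Hessian $M := U^T A U$, which is positive definite by hypothesis and therefore invertible.

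First I would establish \eqref{EQN:Pq_H_invertible}. By KKT applied to $\bar\bz$, we have $\nabla w(\bar\bz) = \mu \br$ for some $\mu \in \Re$, which means $U^T \nabla w(\bar\bz) = 0$, i.e., $P_{\Theta_0}\nabla w(\bar\bz) = \bzero$. Writing $\bz = \bar\bz + U\by$ and using $\nabla w(\bz) = \nabla w(\bar\bz) + A(\bz-\bar\bz)$, a short computation gives
$$
P_{\Theta_0}\nabla w(\bz) = U U^T A U \by = U M \by.
$$
Multiplying on the left by $B = U M^{-1} U^T$ and using $U^T U = I$ then yields $B\, P_{\Theta_0}\nabla w(\bz) = U \by = \bz - \bar\bz$, which is \eqref{EQN:Pq_H_invertible}.

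Next I would derive \eqref{EQN:funcdiff_B}. Since $\bz - \bar\bz \in \Theta_0$ and $\nabla w(\bar\bz) \perp \Theta_0$, a standard Taylor expansion at $\bar\bz$ collapses to
$$
w(\bz) - w(\bar\bz) = \tfrac{1}{2}(\bz-\bar\bz)^T A (\bz-\bar\bz) = \tfrac{1}{2}\by^T M \by.
$$
Substituting the expression $U M \by = P_{\Theta_0}\nabla w(\bz)$ obtained above, and letting $\bg := P_{\Theta_0}\nabla w(\bz) \in \range(U)$, I would rewrite the quadratic form as $\by^T M \by = (U^T\bg)^T M^{-1} (U^T \bg)$. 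Bounding this by $\|M^{-1}\|\,\|U^T\bg\|^2$ and using $\|U^T\bg\| = \|\bg\|$ (since $\bg \in \range(U)$ and $U$ has orthonormal columns) together with the identity $\|B\| = \|M^{-1}\|$ (again from $U$ being an isometry on its range) delivers \eqref{EQN:funcdiff_B}.

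The only mildly subtle step is the identification $\|B\| = \|M^{-1}\|$ and the sharp norm bound $(U^T\bg)^T M^{-1}(U^T\bg) \le \|M^{-1}\|\|\bg\|^2$; both reduce to the fact that $U^TU = I_{m-1}$, so that $U$ acts isometrically on $\Re^{m-1}$ and its range coincides with $\Theta_0 = \range(P_{\Theta_0})$. Everything else is bookkeeping on the reduced quadratic in $\by$ coordinates.
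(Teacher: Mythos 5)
Your proof is correct and follows essentially the same route as the paper: parameterize $\Theta$ via the orthonormal basis $U$ of $\Theta_0$, use the KKT condition $\nabla w(\bar\bz)\parallel\br$ to kill the term $P_{\Theta_0}\nabla w(\bar\bz)$, and reduce everything to the positive definite matrix $M=U^TAU$. The only cosmetic differences are that you anchor the parameterization at $\bar\bz$ rather than at $s\br$ (so you never need to write down the reduced function $\widetilde w$ explicitly), and your bound $(U^T\bg)^TM^{-1}(U^T\bg)\le\Vert M^{-1}\Vert\,\Vert\bg\Vert^2$ is the reduced-coordinate form of the paper's identity $B^TAB=B$.
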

\par \vskip -9pt
\begin{proof}
Without loss of generality we assume $\Vert \br \Vert_2 = 1$. Let $\bz\in\Theta$;
since $s = \br^T\bz$ and $\range(U)$ is the space orthogonal to $\br$, we have
$$
\bz = s\, \br + U \by,
$$
for some $\by \in \Re^{m-1}$.
%
%
Thus, \eqref{PROB:SmallLinConQP} can be reduced to
\begin{equation*}
\label{EQN:ReducedPB}
\min \; \widetilde w(\by) := \frac{1}{2} \by^T U^T A \, U \by - (\bp^T - s\,\br^T A) U \by.
\end{equation*}
By writing $\bar \bz$, the minimizer of \eqref{PROB:SmallLinConQP},
as $\bar \bz = s \, \br + U \bar \by$, we have
\begin{equation}
\label{EQN:DiffZ}
\bz - \bar \bz = U (\by - \bar \by)
\end{equation}
and, by observing that $\nabla \widetilde w(\bar \by) = 0$,  we obtain
\begin{equation}
\label{EQN:DiffNablaTildew}
\nabla \widetilde w(\by) = \nabla \widetilde w(\by) - \nabla \widetilde w(\bar \by) =
U^T A \, U ( \by - \bar \by ) = U^T ( \nabla w(\bz) - \nabla w(\bar \bz) ).
\end{equation}
%
%
Since $\nabla w(\bar \bz) = \gamma \, \br$ for some $\gamma \in \Re$,
we get $U U^T \nabla w(\bar \bz)  = P_{\Theta_0} \nabla w(\bar \bz) = 0$ and hence
\begin{equation}
\label{EQN:VDiffNablaTildew}
U \nabla \widetilde w(\by) = U U^T ( \nabla w(\bz) - \nabla w(\bar \bz) ) =
P_{\Theta_0} \nabla w(\bz).
\end{equation}
From \eqref{EQN:DiffZ}, \eqref{EQN:DiffNablaTildew} and \eqref{EQN:VDiffNablaTildew}
it follows that
$$
\bz - \bar \bz = U (\by - \bar \by) = U (U^T A \, U)^{-1} U^T U \nabla \widetilde w(\by) =
B \, P_{\Theta_0} \nabla w(\bz),
$$
which is \eqref{EQN:Pq_H_invertible}.

Let $\phi(\bz) = P_{\Theta_0} \nabla w(\bz)$. By applying \eqref{EQN:Pq_H_invertible},
we get
$$
w(\bz) - w(\bar \bz) =\frac{1}{2} (\bz - \bar \bz)^T A   (\bz - \bar \bz) =
\frac{1}{2} \phi(\bz)^T B^T \, A \, B \, \phi(\bz).
$$
By observing that $B^T A \, B  = U (U^T A U)^{-1} U^T A U (U^T A U)^{-1} U^T  
= B$, we have
$$
w(\bz) - w(\bar \bz)  =  \frac{1}{2} \phi(\bz)^T B \,\phi(\bz) \le \frac{1}{2} \Vert B\Vert\Vert \phi(\bz)\Vert^2,
$$
which completes the proof.
\end{proof}

\par\smallskip
Now we are ready to prove Theorem~\ref{th:Dostal3.2}.
\medskip
\begin{proofof}{Theorem~\ref{th:Dostal3.2}}
Let $\by = \bx - \Vert H_q \Vert^{-1}\,\bbeta(\bx)$. By Theorem~\ref{lemma:grad_beta_norma_beta}
and observing that $\Vert \cdot \Vert \geq \Vert \cdot \Vert_\infty$ and $\bbeta(\bx) = V V^T\bbeta(\bx)$,
because $\bbeta(\bx)\in\left\lbrace \bq \right\rbrace^\perp$, we get
\begin{eqnarray}
   f(\by)-f(\bx) & = & \frac{1}{2}\,\Vert H_q \Vert^{-2}\,\bbeta(\bx)^T H \bbeta(\bx) - 
          \Vert H_q \Vert^{-1}\, (\nabla f(\bx))^T\bbeta(\bx) \nonumber\\
   & = & \frac{1}{2}\,\Vert H_q \Vert^{-2}\,\bbeta(\bx)^T V\,H_q\,V^T \bbeta(\bx) - 
          \Vert H_q \Vert^{-1}\,\Vert \bbeta(\bx) \Vert^2 \nonumber\\
   & \leq & \frac{1}{2}\,\Vert H_q \Vert^{-1}\,\Vert V^T\bbeta(\bx) \Vert^2 - 
	\Vert H_q \Vert^{-1}\,\Vert \bbeta(\bx) \Vert^2 = 
	  -\frac{1}{2}\,\Vert H_q \Vert^{-1}\,\Vert \bbeta(\bx) \Vert^2 \nonumber\\
   &< &  -\frac{1}{2}\,\Vert H_q \Vert^{-1}\,\kappa(H_q)\,\Vert \bphi(\bx) \Vert^2 
          = -\frac{1}{2}\,\Vert H_q^{-1} \Vert\,\Vert \bphi(\bx) \Vert^2. \label{theorem_DOST_y-x}
\end{eqnarray}
%
%
The point $\bar\bx$ satisfies the KKT conditions of problem~\eqref{subp_theorem},
\begin{eqnarray}
& & \;\, \nabla f(\bar{\bx}) = \sum_{i\in \mathcal{A}(\bx)}\eta_i\be_i+ \gamma\,\bq, 
\label{kkt_subp} \\
& & \bq^T\bar{\bx} = b,  \;\;  \bar{x}_i = x_i \;\,  \forall \, i \in \mathcal{A}(\bx), \nonumber
\end{eqnarray}
where $\eta_i$ and $\gamma$ are the Lagrange multipliers, and hence
\begin{eqnarray}
   \nabla f(\bar{\bx})^T (\bx-\bar{\bx}) & = &
       \sum_{i\in \mathcal{A}} \left( \eta_i \be_i + \gamma \bq \right)^T \! (\bx-\bar{\bx})=0,
       \label{gradbarort} \\
       \nabla f_{\mathcal{F}} ( \bar{\bx} ) & = &\gamma \, \bq_\mathcal{F},
    \label{gradpropq}
\end{eqnarray}
where $\mathcal{A} = \mathcal{A} (\bx)$ and $\mathcal{F} = \mathcal{F} (\bx)$.
It follows that
\begin{equation} \label{theorem_DOST_x-barx}
   f(\bx)-f(\bar{\bx}) = \frac{1}{2}\,(\bx-\bar{\bx})^T H (\bx-\bar{\bx}) + \nabla f(\bar{\bx})^T(\bx-\bar{\bx})		                        
                              = \frac{1}{2}\,(\bx-\bar{\bx})_\mathcal{F}^T\,H_{\mathcal{FF}}\, (\bx-\bar{\bx})_\mathcal{F}.
\end{equation}
Now we apply Lemma~\ref{lemma:PqHF_pseudoinverse}
with $\bz = \bx_\mathcal{F}$, $A = H_\mathcal{FF}$, 
$\bp = \bc_\mathcal{F} - H_{\mathcal{FA}}\,\bx_\mathcal{A}$,
$\br = \bq_\mathcal{F}$, $s= b-\bq_\mathcal{A}^T\,\bx_\mathcal{A}$,
$\Theta_0 = \left\lbrace \bq_{\mathcal{F}}\right\rbrace^\perp$, and $w(\bz)$ defined
as in~\eqref{PROB:SmallLinConQP}.
By~\eqref{phi_F as projection}, we have
$$
P_{\Theta_0} \nabla w(\bz) = P_{\{\bq_{\mathcal{F}} \}^{\bot}}
\left(\nabla f_{\mathcal{F}}(\bx)\right) = \bphi_\mathcal{F} (\bx).
$$
Therefore, from \eqref{EQN:funcdiff_B} and \eqref{theorem_DOST_x-barx} we get
\begin{equation}\label{theorem_DOST_x-barx_2nd_withB}
   f(\bx)-f(\bar{\bx}) 
   \leq \frac{1}{2}\left\Vert B \right\Vert\,\Vert \bphi_\mathcal{F} (\bx) \Vert^2,
\end{equation}
where $B = W (W^T H_{\mathcal{FF}} W)^{-1} W^T$ and $W\in \Re^{|\mathcal{F}| \times  (|\mathcal{F}|-1)}$ 
has orthonormal columns spanning $\left\lbrace \bq_\mathcal{F} \right\rbrace^\perp$.
We note that
\begin{equation}\label{theorem_DOST_normB_mineig}
\Vert B\Vert
\le \Vert (W^T H_{\mathcal{FF}} W)^{-1}\Vert =
\zeta_{max} \left((W^T H_{\mathcal{FF}} W)^{-1}\right) = \frac{1}{\zeta_{min} (W^T H_{\mathcal{FF}} W)};
\end{equation}
\noindent furthermore,
\begin{eqnarray}
\!\!\!\!\!\!\!\!\! \zeta_{min} (W^T H_{\mathcal{FF}} W) & \; = &
           \min_{\scriptsize \begin{array}{c} \bs\in\Re^{|\mathcal{F}|-1} \\ \bs \ne 0 \end{array}}
           \frac{\bs^T W^T H_{\mathcal{FF}} W \bs}{\bs^T \bs} \; =
           \!\!\! \min_{\scriptsize \begin{array}{c} \bw \in \Re^{|\mathcal{F}|}, \, \bw \ne 0\\  \bw \perp 
                        \bq_{\mathcal{F}} \end{array}}   \!\!\! \frac{\bw^T H_{\mathcal{FF}} \bw}{\bw^T \bw}                                          
                                                                                                                                           \nonumber \\
   & \; = & \!\! \min_{\scriptsize \begin{array}{c} \bv \in \Re^n,\, \bv \ne 0 \\
           \bv_{\mathcal{F}}\perp\bq_{\mathcal{F}},\;\bv_\mathcal{A} = 0 \end{array}}
           \!\!\!\!\! \frac{\bv^T H \bv}{\bv^T \bv} \; \ge 
           \!\! \min_{\scriptsize \begin{array}{c} \bv \in \Re^n,\, \bv \ne 0\\  \bv\perp\bq \end{array}} 
           \!\!\! \frac{\bv^T H \bv}{\bv^T \bv}                                                                       \label{zetamin} \\
   & \; = & \! \min_{\scriptsize \begin{array}{c} \bu\in\Re^{n-1} \\ \bu \ne 0\end{array}} 
         \!\!\!\! \frac{\bu^T V^T H V\bu}{\bu^T V^T V \bu} \; = \; \zeta_{min}(H_q).                 \nonumber
\end{eqnarray}

\noindent The last inequality, together with \eqref{theorem_DOST_x-barx_2nd_withB} and \eqref{theorem_DOST_normB_mineig}, yields
\begin{equation}\label{theorem_DOST_x-barx_2nd}
f(\bx)-f(\bar{\bx}) 
\leq \frac{1}{2}\frac{1}{\zeta_{min}(H_q)}\Vert \bphi_\mathcal{F} (\bx) \Vert^2 = \frac{1}{2}\Vert H_q^{-1} \Vert\,\Vert \bphi (\bx) \Vert^2. 
\end{equation}
Then, by \eqref{theorem_DOST_y-x} and \eqref{theorem_DOST_x-barx_2nd}, we get
\begin{equation} \label{fyminfbar}
f(\by) - f(\bar\bx) = f(\by) -  f(\bx) + f(\bx) - f(\bar{\bx}) < 0.
\end{equation}

\noindent For the remainder of the proof we assume that 
$ \bar{\bx}\in\Omega $ and set $\bar{\mathcal{F}} := \mathcal{F}(\bar{\bx})$.  
From \eqref{gradpropq} and $\bar{\mathcal{F}} \subseteq \mathcal{F}$ it follows that 
$\nabla f_{\bar{\mathcal{F}}} (\bar{\bx}) = \gamma\,\bq_{\bar{\mathcal{F}}}$, and hence
$$
   \bphi_{\bar{\mathcal{F}}} (\bar{\bx}) =
   \bh_{\bar{\mathcal{F}}} (\bar{\bx}) =
   \nabla f_{\bar{\mathcal{F}}} (\bar{\bx}) -
   \frac{\nabla f_{\bar{\mathcal{F}}} (\bar{\bx})^T\, \bq_{\bar{\mathcal{F}}}}
          {\bq_{\bar{\mathcal{F}}}^T \bq_{\bar{\mathcal{F}}}}\,\bq^{\bar{\mathcal{F}}} = 0.
$$
\par \vskip -5pt
\noindent Therefore \\[-19pt]
\begin{equation}
\bphi(\bar{\bx})=0.
\label{fibarx0}
\end{equation}
By using \eqref{fyminfbar} we get \\[-16pt]
$$ 
   0>f(\by)-f(\bar{\bx}) = \nabla f(\bar{\bx})^T\,(\by-\bar{\bx}) + 
  \frac{1}{2}(\by-\bar{\bx})^T\,H\,(\by-\bar{\bx}) > \nabla f(\bar{\bx})^T\,(\by-\bar{\bx}).
$$
Because of the definition of $\by$ and \eqref{gradbarort}, we have
\begin{eqnarray*} 
   \nabla f(\bar{\bx})^T\,(\by-\bar{\bx}) & = & \nabla f(\bar{\bx})^T\,(\by-\bx) + 
          \nabla f(\bar{\bx})^T\,(\bx-\bar{\bx}) = \nabla f(\bar{\bx})^T\,(\by-\bx)  \\	
   & = & - \Vert H_q^{-1} \Vert\, \nabla f(\bar{\bx})^T\,\bbeta(\bx),
\end{eqnarray*}
and thus \\[-18pt]
\begin{equation} \label{gradf_beta}
\nabla f(\bar{\bx})^T\,\bbeta(\bx)>0 .
\end{equation}

By contradiction, suppose that $\bbeta(\bar{\bx}) = 0$.
Since $\bar\bx \in \Omega$, from~\eqref{fibarx0} it follows that $\bar{\bx}$ is the
optimal solution of problem~\eqref{SLBQP}, and thus $-\nabla f(\bar{\bx})
\in T_\Omega(\bar{\bx})^\circ$.  We consider two cases.
\smallskip
\begin{itemize}[leftmargin=.65cm]
\item[(a)]
$\mathcal{A}(\bx) = \mathcal{A}(\bar{\bx})$. In this case $T_\Omega(\bar{\bx})^\circ
= T_\Omega(\bx)^\circ$, and, since $-\bbeta(\bx)\in T_\Omega(\bx)$ and
$-\nabla f(\bar{\bx}) \in T_\Omega(\bx)^\circ$, it is
$-\nabla f(\bar{\bx})^T\,(-\bbeta(\bx))\leq 0$. This contradicts~\eqref{gradf_beta}.
\vskip 6pt
\item[(b)]
$\mathcal{A}(\bx) \subsetneq \mathcal{A}(\bar{\bx})$. In this case the optimality of
$\bar{\bx}$ for problem~\eqref{SLBQP} yields \\[-14pt]
\begin{equation}
\nabla f(\bar{\bx})
	= \sum_{i \in \mathcal{A}(\bar{\bx})} \lambda_i\be_i + \theta\,\bq,
	\quad \lambda_i \geq 0 \;\; \mbox{if} \;\;  i \in \mathcal{A}_l(\bar{\bx}),
	\quad \lambda_i \leq 0 \;\; \mbox{if} \;\; 
	 i \in \mathcal{A}_u(\bar{\bx}).
	 \label{kkt_prob}
\end{equation}		
Since $\mathcal{F}(\bar{\bx})\subsetneq \mathcal{F}(\bx)$, by comparing
\eqref{kkt_subp} and \eqref{kkt_prob} we find that
$\nabla f_i (\bar{\bx})$ $= \theta q_i = \gamma q_i$ for all $i \in \mathcal{F}(\bar{\bx})$,
and thence $\theta=\gamma$. Then, $\eta_i=\lambda_i$ for
$i \in \mathcal{A}(\bx)$, whereas $\lambda_i = 0$ for 
$i \in \mathcal{A}(\bar{\bx}) \setminus \mathcal{A}(\bx)$, i.e., \\[-12pt]
\begin{equation*}
\nabla f(\bar{\bx})
= \sum_{i \in \mathcal{A}(\bx)} \lambda_i\be_i + \theta\,\bq,
\quad \lambda_i \geq 0 \; \mbox{ if } \;  i \in \mathcal{A}_l(\bx),
\quad \lambda_i \leq 0 \; \mbox{ if } \; i \in \mathcal{A}_u(\bx).
\end{equation*}
\par \vskip -1pt
\noindent Therefore $-\nabla f(\bar{\bx}) \in T_\Omega(\bx)^\circ$, which leads
to a contradiction as in case (a).
\end{itemize}
\vspace*{-3.8mm} \hfill \end{proofof}

\section{Proportionality-based 2-phase Gradient Projection method\label{sec:method}}

Before presenting our method, we briefly describe the basic GP method as stated
by Calamai and Mor\'{e} in \cite{CalamaiMore:1987}. Given the current iterate
$\bx^k$, the next one is obtained as 
$$
  \bx^{k+1}=P_{\Omega}(\bx^k - \alpha^k \nabla f^k),
$$
where $P_\Omega$ is the orthogonal projection onto $\Omega$, and
$\alpha^k$ satisfies the following sufficient decrease condition:
given $\gamma_1,\gamma_2,\gamma_3>0$ and $\mu_1,\mu_2\in(0,1)$,
\begin{equation}\label{SuffDecrCond1}
	f^{k+1} \leq f^k+\mu_1\,(\nabla f^k)^T(\bx^{k+1}-\bx^k),
\end{equation}
where \\[-18pt]
\begin{equation}\label{SuffDecrCond2}
\begin{array}{c}
\displaystyle \qquad \alpha^k \leq \gamma_1,\\[3pt]
\displaystyle \alpha^k \ge \gamma_2\;\mbox{ or }\; \alpha^k\geq\gamma_3\,\bar{\alpha}^k>0,
\end{array}
\end{equation}
\par \vskip -2pt
\noindent with $\bar{\alpha}^k$ such that
\begin{equation}\label{SuffDecrCond3}
    f(\bx^k(\bar{\alpha}^k)) > f^k+\mu_2\,(\nabla f^k)^T(\bx^k(\bar{\alpha}^k)-\bx^k),
\end{equation}
where $\bx^k(\bar{\alpha}^k) : = P_{\Omega}(\bx^k - \bar{\alpha}^k \nabla f(\bx^k))$. 
In Section~\ref{sec:identification} a simple practical procedure is described
for the determination of $\alpha_k$ that satisfies the sufficient decrease condition.

In \cite[Algorithm~5.3]{CalamaiMore:1987} a very general algorithmic framework is
presented, where the previous GP steps are used in selected iterations, alternated
with simple decrease steps aimed to speedup the convergence of the overall algorithm.
%
%
The role of GP steps is to identify promising active sets, i.e., active variables that
are likely to be active at the solution too. Once a suitable active set has been fixed
at a certain iterate~$\bx^k$, a reduced problem is defined on the complementary
set of free variables
\begin{equation}\label{reducedsubp}
\begin{array}{rl}
\min                    & f(\bx), \\[2pt]
\mbox{\rm s.t.}   & \bx \in \Omega(\bx^k),
\end{array}
\end{equation}
Problem~\eqref{reducedsubp} can be easily formulated as an unconstrained
quadratic problem, as shown in Section~\ref{sec:identification}.

We now introduce the Proportionality-based 2-phase Gradient Projection (P2GP) method
for problem~\eqref{SLBQP}. The method does not assume that~\eqref{SLBQP} is
strictly convex. However, if~\eqref{SLBQP} is not strictly convex,
the method only computes an approximation of a stationary point or finds that
the problem is unbounded below. If strict convexity holds, P2GP provides an approximation
to the optimal solution. 
The method is outlined in Algorithm~\ref{alg:P2GP} and explained in detail in the next sections.
For the sake of brevity, $\bphi(\bx^k)$ and $\bbeta(\bx^k)$ are denoted by
$\bphi^k$ and $\bbeta^k$, respectively.
Like GPCG, it alternates identification phases, where GP steps are performed that satisfy
\eqref{SuffDecrCond1}-\eqref{SuffDecrCond3},
and minimization phases, where an approximate solution to~\eqref{reducedsubp} is
searched, with $\bx^k$ inherited from the last identification phase.
Unless a point satisfying
\begin{equation} \label{accur_req}
\| \bphi^k + \bbeta^k \| \le tol
\end{equation}
is found, or the problem is discovered to be unbounded below, the identification phase
proceeds either until a promising active set
$\mathcal{A}^{k+1}$ is identified (i.e., an active set that remains fixed in two consecutive
iterations) or no reasonable progress is made in reducing the objective function, i.e., \\[-14pt]
\begin{equation} \label{stopGP}
f^k - f^{k+1} \le \eta\, \max_{m \le l < k} (f^l - f^{l+1}),
\end{equation}
\par \vskip -3pt
\noindent where $\eta$ is a suitable constant and $m$ is the first iteration of the
current identification phase. This choice follows that in~\cite{More:1991}.
In the minimization phase, an approximate solution to the reduced
problem obtained by fixing the variables with indices in the current active set is searched for.
The proportionality criterion~\eqref{proportioning} is used to decide when the minimization
phase has to be terminated; this is a significant difference from the GPCG method, which
exploits a condition based on the bindingness of the active variables.
Note that the accuracy required in the solution of the reduced problem~\eqref{reducedsubp}
affects the efficiency of the method and a loose stopping criterion must be used, since the
control of the minimization phase is actually left to the proportionality criterion
(more details are given in Section~\ref{sec:minimization}). Like the identification,
the minimization phase is abandoned if a suitable approximation to a stationary point
is computed or unboundedness is discovered. Nonpositive curvature directions
are exploited as explained in Sections~\ref{sec:identification}
and~\ref{sec:minimization}.
	
\begin{algorithm}[h!]
\small
\caption{(P2GP) \label{alg:P2GP}}
\begin{algorithmic}[1]
\vskip 2pt
\State {$x_0 \in \Omega; \;\; tol \geq 0; \;\; \eta \in (0, 1); \;\; \Gamma > 0; \;\; k = 0;$}
\State {$conv = (\left \Vert \bphi^k+\bbeta^k \right \Vert \le {tol}); \;\; unbnd = .f\!alse.; \;\; 
           phase1 = .true.; \;\; phase2 = .true.$}
\While {$(\neg \, conv$  $\wedge$ $\neg \, unbnd)$}  \Comment{\textsc{Main loop}}
                                                                                                      \label{alg:main_loop_start}
	\State {$m = k$;}                                                                            \label{alg:ident_start}
	\While {($phase1$)}   \Comment{\textsc{Identification Phase}}
        \If {($(\nabla_\Omega f^k)^T H \, (\nabla_\Omega f^k)\leq 0 \; \wedge \;
                 \max \left\lbrace \alpha > 0 \; \mbox{ s.t.} \;\,
                 \bx^k+\alpha \nabla_\Omega f^k\in\Omega \right\rbrace = +\infty$)}
        \State {$unbnd = .true.;$}
	\Else
	\State {$\bx^{k+1}=P_{\Omega}(\bx^k-\alpha^k \nabla f^k)$
	      with $\alpha^k$ such that \eqref{SuffDecrCond1}-\eqref{SuffDecrCond3} hold;} 
                                                                                                                \label{alg:proj1}
	\EndIf
        \If {($\neg \, unbnd$)}
        \State {$conv = (\left \Vert \bphi^{k+1} +\bbeta^{k+1} \right \Vert \le {tol});$}
        \vspace*{-.5mm}
        \State {$phase1 = (\mathcal{A}^{k+1}  \ne \mathcal{A}^k$)  $\wedge$
		($f^k - f^{k+1}  > \eta\, \displaystyle \max_{m \le l < k} (f^l - f^{l+1})$) $\wedge$
		$(\neg \, conv);$}
        \vspace*{-1.5mm}
        \State  $k=k+1$;
        \EndIf
        \EndWhile                                                                                     \label{alg:ident_end}
        \If {($conv$  $\vee$ $unbnd$)}
       \State {$phase2 = .f\!alse.;$}
       \EndIf
	\While {($phase2$)}	 \Comment{\textsc{Minimization Phase}}  \label{alg:minim_start}
	\State {Compute an approx.~solution~$\bd^k$ to}
                  $\min \! \left\{ \! f(\bx^k \! + \bd) \;\, \mbox{s.t.} \; \bq^T\bd=0, \,
                    d_i=0 \mbox{ if } i \in \mathcal{A}^k \! \right\} \! ;$
	\If {($(\bd^k)^T H \, \bd^k \leq 0$)}
	\State {Compute $\alpha^k = \max \left\lbrace \alpha > 0 \; \mbox{ s.t.} \;\,
                \bx^k+\alpha \bd^k\in\Omega \right\rbrace;$}
	\If {($\alpha = +\infty$)}
        \State {$unbnd = .true.;$}
	\Else
	\State {$\bx^{k+1} = \bx^k+\alpha^k \bd^k$;}
        \State {$conv = (\left \Vert \bphi^{k+1} +\bbeta^{k+1} \right \Vert \le {tol});$}
                                                                                                              \label{alg:gp_conv}
	\State {$k=k+1$;}
	\EndIf
        \State {$phase2 = .f\!alse.$;}
	\Else
	\State {$\bx^{k+1} = P_{\Omega^k}({\bx}^k+\alpha^k \bd^k)$
		with $\alpha^k$ such that $f^{k+1} < f^k$ and  
               $\Omega^k = \Omega\,\cap\,\Omega(\bx^k)$} \label{alg:proj2}
        \State $conv = (\left \Vert \bphi^{k+1}+\bbeta^{k+1} \right \Vert \le {tol});$
        \State {$phase2 = (\Vert \bbeta^{k+1} \Vert_\infty \le \Gamma \,
		\Vert \bphi^{k+1} \Vert_2$)  $\wedge$ ($\neg \, conv$);}
        \State {$k = k+1$;}
	\EndIf
        \EndWhile                                                                          \label{alg:minim_end}
        \State {$phase1 = .true.;$ \ $phase2 = .true.;$}
\EndWhile
\If {($conv$)}
       \State {\Return $\bx^k$}
\Else
       \State {\Return ``problem~\eqref{SLBQP} is unbounded'';}
\EndIf
\end{algorithmic}
\end{algorithm}

We note that the minimization phase can add variables to the active set,
but cannot remove them, and thus P2GP fits into the general framework
of~\cite[Algorithm~5.3]{CalamaiMore:1987}. Thus we may exploit
general convergence results available for that algorithm. To this end,
we introduce the following definition.
\par \smallskip
\begin{definition}
Let $\left\{\bx^k\right\}$ be a sequence generated by the P2GP method
applied to problem~\ref{SLBQP}. The set \\[-15pt]
$$
  K_{GP} = \left\lbrace k\in\mathbb{N} \; :\; \bx^{k+1} 
  \mbox{ is generated by step~\ref{alg:proj1} of Algorithm~\ref{alg:P2GP}} \right\rbrace
$$
\par \vskip -3pt
\noindent is called set of GP iterations.	
\end{definition}
 \par \smallskip \noindent 
The following convergence result holds, which follows
from~\cite[Theorem~5.2]{CalamaiMore:1987}.
\par \smallskip
\begin{theorem}\label{th:Conv_result_1}
Let $\left\{\bx^k\right\}$ be a sequence generated by applying the P2GP method
to problem~\eqref{SLBQP}. Assume that the set of GP iterations, $K_{GP}$, is infinite.
If some subsequence  $\left\{\bx^k\right\}_{k\in K}$, with $K\subseteq K_{GP}$,
is bounded, then
\begin{equation}\label{ProjGradNorm_tends_to_0}
\lim\limits_{k\in K,\,k\rightarrow\infty} \left\Vert \nabla_\Omega f(\bx^{k+1})\right\Vert = 0.
\end{equation}
Moreover, any limit point of $\left\{\bx^k\right\}_{k\in K_{GP}}$ is a stationary point
for problem~\eqref{SLBQP}. 
%
%
\end{theorem}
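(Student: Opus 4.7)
My plan is to show that every iterate produced by the P2GP method fits into one of the two categories allowed by the general algorithmic framework \cite[Algorithm~5.3]{CalamaiMore:1987}, and then to invoke \cite[Theorem~5.2]{CalamaiMore:1987} as a black box. In that framework each iterate $\bx^{k+1}$ must be either (a) a GP step whose stepsize satisfies the sufficient decrease conditions \eqref{SuffDecrCond1}--\eqref{SuffDecrCond3}, or (b) any feasible step satisfying $f^{k+1}\le f^k$ together with $\mathcal{A}^k\subseteq\mathcal{A}^{k+1}$. The quoted theorem then yields exactly \eqref{ProjGradNorm_tends_to_0} along any bounded subsequence of GP iterations and, combined with the lower semicontinuity of $\Vert\nabla_\Omega f(\cdot)\Vert$ (\cite[Lemma~3.3]{CalamaiMore:1987}), gives stationarity of every limit point.

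The verification splits according to the line of Algorithm~\ref{alg:P2GP} at which $\bx^{k+1}$ is produced. The identification step at line~\ref{alg:proj1} is of type~(a) by construction. For the minimization phase, when $(\bd^k)^T H\bd^k>0$ the iterate is generated at line~\ref{alg:proj2} as $\bx^{k+1}=P_{\Omega^k}(\bx^k+\alpha^k\bd^k)$ with $\Omega^k=\Omega\cap\Omega(\bx^k)$: the stepsize is explicitly required to enforce $f^{k+1}<f^k$, and since $\Omega^k\subseteq\Omega(\bx^k)$ each previously active component is preserved, so $\mathcal{A}^k\subseteq\mathcal{A}^{k+1}$. When $(\bd^k)^T H\bd^k\le 0$ and the algorithm does not declare unboundedness, $\alpha^k$ is the maximum feasible stepsize along $\bd^k$; since $\bd^k$ is computed with $d_i^k=0$ for $i\in\mathcal{A}^k$ and $\bq^T\bd^k=0$, the previously active components remain active and at least one new variable attains a bound, so $\mathcal{A}^k\subsetneq\mathcal{A}^{k+1}$, and strict decrease of $f$ on this ray follows from the descent property of $\bd^k$ together with the concavity or linearity of the one-dimensional restriction of $f$. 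Thus every non-GP iterate satisfies the requirements of type~(b).

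Given the framework fit, the first assertion is immediate from \cite[Theorem~5.2]{CalamaiMore:1987}: along any bounded subsequence $\{\bx^k\}_{k\in K}$ with $K\subseteq K_{GP}$, the sufficient decrease conditions at the GP steps force \eqref{ProjGradNorm_tends_to_0}. For the second assertion, let $\bar\bx$ be a limit point of $\{\bx^k\}_{k\in K_{GP}}$ and pick $K'\subseteq K_{GP}$ with $\bx^k\to\bar\bx$ along $K'$; this subsequence is bounded, so the first part applies and gives $\Vert\nabla_\Omega f(\bx^{k+1})\Vert\to 0$ along $K'$, and the lower semicontinuity of $\Vert\nabla_\Omega f(\cdot)\Vert$ yields $\Vert\nabla_\Omega f(\bar\bx)\Vert=0$, i.e., $\bar\bx$ is stationary for \eqref{SLBQP}.

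The only real obstacle I anticipate is the careful bookkeeping needed to verify that \emph{no} step of the minimization phase can release a previously active bound, especially in the nonpositive-curvature branch where the iterate is pushed to the feasibility boundary; ensuring monotone decrease of $f$ along that ray requires a small but explicit computation using the sign of $(\bd^k)^T H\bd^k$ and the descent character of $\bd^k$ produced by the inner solver. Once these two facts are in hand, the conclusion is a direct appeal to the Calamai--Mor\'e convergence theorem and does not require any new analysis.
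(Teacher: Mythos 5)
Your proposal is correct and follows essentially the same route as the paper, which simply observes that the minimization phase can add but never remove active variables (so P2GP fits the framework of \cite[Algorithm~5.3]{CalamaiMore:1987}) and then cites \cite[Theorem~5.2]{CalamaiMore:1987} without further argument. Your write-up is in fact more explicit than the paper's, spelling out the case analysis for the two branches of the minimization phase and the role of lower semicontinuity in passing from \eqref{ProjGradNorm_tends_to_0} to stationarity of limit points.
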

\par \smallskip

The identification property of the GP steps is inherited by the whole sequence generated by the 
P2GP method, as shown by the following Lemma.
\par \smallskip
\begin{lemma} \label{lemma:P2GP_identifies_A1}
Let us assume that problem~\eqref{SLBQP} is strictly convex
and $\bx^*$ is its optimal solution. If $\left\{\bx^k\right\}$ is a sequence
in $\Omega$ generated by the P2GP method applied to~\eqref{SLBQP}, then
for all $k$ sufficiently large
$$
    \mathcal{A}_N^* \subseteq \mathcal{A}^k \subseteq \mathcal{A}^*
$$
where $\mathcal{A}_N^*$ is defined in Theorem~\ref{th:convres_calamaimore_gptozero}.
\end{lemma}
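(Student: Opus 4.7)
The plan is to reduce the claim to Theorem~\ref{th:convres_calamaimore_gptozero} applied to the subsequence of GP iterates, and then to propagate the resulting inclusion through each minimization phase. First I would dispose of the case in which P2GP terminates after finitely many steps: the termination condition together with strict convexity and the orthogonality $\bphi^k \perp \bbeta^k$ forces $\nabla_\Omega f(\bx^k)=0$ and hence $\bx^k = \bx^*$, so the inclusions are trivial. Assume therefore that $\{\bx^k\}$ is infinite. Since the main loop returns to the identification phase after every minimization phase, and each identification phase performs at least one GP step before exiting, the set $K_{GP}$ is infinite.

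Next I would establish convergence of the entire sequence to $\bx^*$. Strict convexity means that $H$ is positive definite, so $f$ is coercive on $\Re^n$; since $\{f^k\}$ is monotone nonincreasing (both phases produce strict decrease), the iterates lie in the bounded level set $\{\bx \in \Omega : f(\bx) \le f(\bx^0)\}$. Apply Theorem~\ref{th:Conv_result_1} to this bounded subsequence $\{\bx^k\}_{k\in K_{GP}}$: it gives $\|\nabla_\Omega f(\bx^{k+1})\|\to 0$ along $K_{GP}$, and every limit point of $\{\bx^{k+1}\}_{k\in K_{GP}}$ is stationary, hence equal to the unique stationary point $\bx^*$. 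Together with boundedness, the monotone convergence $f^k \searrow f^*$ follows, and strict convexity plus coercivity of $f$ then upgrades this to $\bx^k \to \bx^*$ for the full sequence (any accumulation point $\bar{\bx}$ would satisfy $f(\bar{\bx}) = f^*$, forcing $\bar{\bx} = \bx^*$).

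With $\bx^k \to \bx^*$ in hand, the inclusion $\mathcal{A}^k \subseteq \mathcal{A}^*$ is immediate: any index $i \notin \mathcal{A}^*$ satisfies $l_i < x_i^* < u_i$ strictly, so by continuity $l_i < x_i^k < u_i$ for all sufficiently large $k$. For the inclusion $\mathcal{A}_N^* \subseteq \mathcal{A}^k$, I would apply Theorem~\ref{th:convres_calamaimore_gptozero}(ii) (invoking Assumption~\ref{licq}) to the GP subsequence, which yields $\mathcal{A}_N^* \subseteq \mathcal{A}(\bx^{k+1})$ for all sufficiently large $k \in K_{GP}$. To extend this to the intermediate minimization-phase iterates, I would observe that the projection onto $\Omega^k = \Omega \cap \Omega(\bx^k)$ keeps all currently active components fixed, so $\mathcal{A}^k \subseteq \mathcal{A}^{k+1}$ throughout any minimization phase. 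Consequently, the inclusion established at the end of a GP step is preserved (and possibly enlarged) until the next GP step, giving $\mathcal{A}_N^* \subseteq \mathcal{A}^k$ for all sufficiently large $k$.

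The step I expect to require the most care is the upgrade from subsequential to full-sequence convergence $\bx^k \to \bx^*$, since Theorem~\ref{th:Conv_result_1} only delivers $\|\nabla_\Omega f\|\to 0$ along $K_{GP}$; verifying monotonicity of $\{f^k\}$ across both phases and invoking coercivity are the key ingredients. The other subtlety is ensuring that the minimization phase genuinely never deletes an index from the active set, which is a direct consequence of the definition of $\Omega^k$ used in the projection step of Algorithm~\ref{alg:P2GP}.
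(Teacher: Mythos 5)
Your proposal is correct and follows essentially the same route as the paper's proof: boundedness of the iterates from monotone decrease plus strict convexity, Theorem~\ref{th:Conv_result_1} to get subsequential convergence of the GP iterates to the unique minimizer, an upgrade to full-sequence convergence via monotonicity of $\{f^k\}$ and the quadratic growth of $f$ (the paper writes this explicitly as $f(\bx^k)-f(\bx^*)\ge \zeta_{min}(H_q)\Vert\bx^k-\bx^*\Vert^2$, which is your coercivity argument), continuity for $\mathcal{A}^k\subseteq\mathcal{A}^*$, and Theorem~\ref{th:convres_calamaimore_gptozero}(ii) on the GP subsequence combined with the fact that the minimization phase never frees an active variable for $\mathcal{A}_N^*\subseteq\mathcal{A}^k$. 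Your explicit handling of finite termination and of the infinitude of $K_{GP}$ is a minor added care not present in the paper, but the substance is identical.
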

\begin{proof}
Since $f(\bx)$ is bounded from below and the sequence  $\left\lbrace f^k \right\rbrace$
is decreasing, the sequence $\left\{\bx^k\right\}$ is bounded,
and, because of Theorem \ref{th:Conv_result_1}, there is a subsequence
$\left\lbrace \bx^k\right\rbrace_{k \in K^*}$, with $K^*\subseteq K_{GP}$, which
converges to $\bx^*$. Now we show that the whole sequence $\left\lbrace \bx^k \right\rbrace$ 
converges to $\bx^*$. For any $k\in \mathbb{N}$ we have \\[-3.4mm]
\begin{equation}\label{k_kplus}
f(\bx^k)-f(\bx^*) \leq f(\bx^{k^+})-f(\bx^*),
\end{equation}
\noindent where $k^+ = \min\left\lbrace s\in K^* \; : \; s\geq k \right\rbrace$. Moreover, for the
stationarity of $x^*$ we have
$ \nabla f(\bx^*)^T\,(\bx^k - \bx^*)\geq 0$,
and then \\[-15pt]
\begin{equation}\label{eq:fx-fx*}
\begin{array}{ll}
   f(\bx^k)-f(\bx^*) \!\!\!\!\!  & \displaystyle \; = \nabla f(\bx^*)^T\,(\bx^k - \bx^*) +  
                                      \frac{1}{2}(\bx^k - \bx^*)^T\,H\,(\bx^k - \bx^*) \\
                 & \; \ge \displaystyle \frac{1}{2} (\bx^k - \bx^*)^T V H_q V^T (\bx^k - \bx^*)         
                     \ge \zeta_{min}(H_q)\, \Vert \bx^k - \bx^*\Vert^2,
\end{array}
\end{equation}
where $H_q$ and $V$ are defined in Theorem~\ref{th:Dostal3.2} and
the equality $\bx^k - \bx^* = VV^T (\bx^k - \bx^*)$ has been exploited.
From~\eqref{k_kplus} and \eqref{eq:fx-fx*} it follows that $\left\lbrace \bx^k\right\rbrace$
converges to $\bx^*$. Then, for $k$ sufficiently large, $\mathcal{F}^* \subseteq \mathcal{F}^k$
and hence $\mathcal{A}^k \subseteq \mathcal{A}^*$.
Furthermore, by Theorem~\ref{th:convres_calamaimore_gptozero}, the convergence of
$\left\lbrace \bx^k \right\rbrace_{k\in K_{GP}}$ to $\bx^*$,
together with \eqref{ProjGradNorm_tends_to_0}, yields $\mathcal{A}_N^* \subseteq
\mathcal{A}(\bx^k)$ for all $k\in K_{GP}$ sufficiently large. Since minimization
steps do not remove variables from the active set, we have  $\mathcal{A}_N^* \subseteq
\mathcal{A}(\bx^k)$ for all $k$ sufficiently large.
\end{proof}

\par \smallskip
We note that in case of nondegeneracy ($\mathcal{A}_N^* = \mathcal{A}^*$)
the active set eventually settles down, i.e., the identification property holds.
This implies that the the solution of~\eqref{SLBQP} reduces to the solution of an
unconstrained problem in a finite number of iterations, which is the key ingredient
to prove finite convergence of methods
that fit into the framework of \cite[Algorithm~5.3]{CalamaiMore:1987}, such as
the GPCG one. In case of degeneracy we can just say that the nondegenerate
active constraints at the solution will be identified in a finite number of steps. 
However, in the strictly convex case, finite convergence can be achieved in this case
too, provided a suitable value of $\Gamma$ is taken, as stated by the following theorem.
\par \smallskip
\begin{theorem}\label{th:P2GP_finite_convergence}
Let us assume that problem~\eqref{SLBQP} is strictly convex
and $\bx^*$ is its optimal solution. Let  $\left\{\bx^k\right\}$ be a sequence
in $\Omega$ generated by the P2GP method applied to~\eqref{SLBQP}, in which
the minimization phase is performed by any algorithm that is exact for strictly
convex quadratic programming. If one of the following conditions holds:
\begin{itemize}
\item[(i)] $\bx^*$ is  nondegenerate,
\item [(ii)] $\bx^*$ is  degenerate and $\Gamma \geq \kappa(H_q)^{1/2}$,
where $H_q$ is defined in Theorem~\ref{th:Dostal3.2},
\end{itemize}
\vskip 1pt
then $\bx^k=\bx^*$ for $k$ sufficiently large.
\end{theorem}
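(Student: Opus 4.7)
The plan is to invoke Lemma~\ref{lemma:P2GP_identifies_A1}, which gives $\bx^k\to\bx^*$ and $\mathcal{A}_N^*\subseteq\mathcal{A}^k\subseteq\mathcal{A}^*$ for all sufficiently large~$k$. Since $l_i<u_i$ and $x^k_i\to x^*_i$, for such $k$ one also has $x^k_i=x^*_i$ whenever $i\in\mathcal{A}^k$, so $\Omega(\bx^k)=\{\bx:\bq^T\bx=b,\;x_i=x^*_i\text{ for every }i\in\mathcal{A}^k\}$ depends only on $\mathcal{A}^k$. In case~(i), nondegeneracy forces $\mathcal{A}_N^*=\mathcal{A}^*$, hence $\mathcal{A}^k=\mathcal{A}^*$ for $k$ large. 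The subsequent call of the minimization phase, using an exact solver for the strictly convex reduced QP on $\Omega(\bx^*)$, returns its unique KKT point, which is $\bx^*$ itself since the KKT conditions of~\eqref{SLBQP} at $\bx^*$ restrict to those of the reduced problem. Feasibility of $\bx^*$ makes the projection in the minimization-phase update inactive, so $\bx^{k+1}=\bx^*$; the convergence test is trivially satisfied because $\bphi(\bx^*)=\bbeta(\bx^*)=0$ at every stationary point.

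For case~(ii), I would first establish a \emph{key fact}: for every $\mathcal{A}$ with $\mathcal{A}_N^*\subseteq\mathcal{A}\subseteq\mathcal{A}^*$, the unique minimizer $\bar\bx(\mathcal{A})$ of $f$ on the affine set $\{\bx:\bq^T\bx=b,\;x_i=x^*_i,\;i\in\mathcal{A}\}$ either equals $\bx^*$ or is infeasible. Indeed, $\bx^*$ lies in this set, so $f(\bar\bx(\mathcal{A}))\le f^*$; if $\bar\bx(\mathcal{A})\in\Omega$, global optimality of $\bx^*$ forces equality, and strict convexity yields $\bar\bx(\mathcal{A})=\bx^*$. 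Consequently, at each minimization step with $k$ large, the exact minimizer $\bar\bx^k$ is either $\bx^*$---producing $\bx^{k+1}=\bx^*$ and convergence---or is infeasible, and the projection step then adds a new active index to $\mathcal{A}^k$. Because $\bx^k\to\bx^*$ and $l_j<x^*_j<u_j$ for every $j\notin\mathcal{A}^*$, this new index necessarily belongs to $\mathcal{A}^*$, so the active set strictly grows inside the finite lattice $\mathcal{S}:=\{\mathcal{A}:\mathcal{A}_N^*\subseteq\mathcal{A}\subseteq\mathcal{A}^*\}$.

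The main obstacle is to rule out that the proportionality test terminates the minimization phase prematurely, before the active set reaches $\mathcal{A}^*$. This is exactly where the hypothesis $\Gamma\ge\kappa(H_q)^{1/2}$ enters, through Theorem~\ref{th:Dostal3.2}. If the test failed at a projected iterate $\bx^{k+1}$ inside phase~2 for $k$ large, one would have $\|\bbeta^{k+1}\|_\infty>\Gamma\|\bphi^{k+1}\|\ge\kappa(H_q)^{1/2}\|\bphi^{k+1}\|$, and Theorem~\ref{th:Dostal3.2} would force $\bbeta(\bar\bx^{k+1})\ne0$ whenever $\bar\bx^{k+1}\in\Omega$; but the key fact shows that any feasible $\bar\bx^{k+1}$ must equal $\bx^*$, at which $\bbeta$ vanishes, a contradiction. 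Hence failure of the test can only occur when $\bar\bx^{k+1}\notin\Omega$, meaning that another projection step, strictly enlarging the active set within $\mathcal{S}$, is forced; the finiteness of $\mathcal{S}$ then guarantees that after a bounded number of minimization steps the active set equals $\mathcal{A}^*$, the exact minimization returns $\bx^*$, and the algorithm terminates with $\bx^k=\bx^*$.
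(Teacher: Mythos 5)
Your case~(i) and the overall strategy for case~(ii) --- combining Lemma~\ref{lemma:P2GP_identifies_A1}, Theorem~\ref{th:Dostal3.2}, and the nestedness of the active sets produced by the minimization phase --- match the paper's proof. The gap is in case~(ii), and it is genuine. Your ``key fact'' is only the conditional statement ``if $\bar\bx(\mathcal{A})\in\Omega$ then $\bar\bx(\mathcal{A})=\bx^*$''; you never establish that the exact minimizer of the reduced problem actually \emph{is} feasible for large $k$. You are therefore left with the branch ``the proportionality test fails and $\bar\bx^{k+1}\notin\Omega$'', and there your argument breaks down: when $\Vert\bbeta^{k+1}\Vert_\infty>\Gamma\,\Vert\bphi^{k+1}\Vert_2$, Algorithm~\ref{alg:P2GP} sets $phase2$ to false and returns to the \emph{identification} phase; it does not perform ``another projection step''. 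GP steps may remove indices from the active set, so the active set is no longer monotone along the iterations, the finite-lattice argument on your set $\mathcal{S}$ collapses, and a priori the algorithm could alternate between the two phases forever without reaching $\bx^*$ in finitely many steps.

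The missing ingredient --- exactly what the paper supplies --- is that for $k$ large the solution of the reduced problem~\eqref{pb_omega_k} is not merely ``either $\bx^*$ or infeasible'' but is always $\bx^*$. Indeed, by Lemma~\ref{lemma:P2GP_identifies_A1} one has $\mathcal{A}_N^*\subseteq\mathcal{A}^k\subseteq\mathcal{A}^*$, so every index in $\mathcal{A}^*\setminus\mathcal{A}^k$ carries a zero multiplier; hence $\nabla f^*=\sum_{i\in\mathcal{A}^k}\lambda_i^*\be_i+\rho^*\bq$, i.e., $\bx^*$ satisfies the first-order conditions of the equality-constrained reduced problem and, by strict convexity, is its unique solution (and is trivially feasible). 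With this in hand, disproportionality of any late iterate $\bx^k$ would, via Theorem~\ref{th:Dostal3.2} applied with $\bar\bx=\bx^*\in\Omega$, yield $\bbeta(\bx^*)\neq 0$, contradicting optimality of $\bx^*$. Hence \emph{every} late iterate is proportional, the minimization phase is never abandoned, and your nested-active-set count then finishes the proof. Adding this stationarity argument closes your proof.
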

\begin{proof}
\emph{(i)} By Lemma~\ref{lemma:P2GP_identifies_A1}, in case of nondegeneracy
$\mathcal{A}^k =\mathcal{A}^*$ for $k$ sufficiently large, and the thesis trivially holds.

\emph{(ii)} Thanks to Lemma~\ref{lemma:P2GP_identifies_A1}, we have that P2GP is able
to identify the active nondegenerate variables and the free variables at the solution for
$k$ sufficiently large. This means that there exists $\overline{k}$ such that for $k \geq \overline{k}$
the solution $\bx^*$ of~\eqref{SLBQP} is also solution of \\[-13pt]
\begin{equation} \label{pb_omega_k}
\begin{array}{rl}
\min                    & f(\bx), \\[2pt]
\mbox{\rm s.t.}   & \bx \in \Omega(\bx^k).
\end{array}
\end{equation}
\par \vskip -6pt
\noindent Now assume that $\Gamma \geq \kappa(H_q)^{1/2}$ and
suppose by contradiction that there exists $\widehat{k} \geq \overline{k}$ such that
$\Vert \bbeta (\bx^{\widehat{k}}) \Vert_\infty > \Gamma \Vert \bphi(\bx^{\widehat{k}}) \Vert_2$.
Then, by Theorem~\ref{th:Dostal3.2} it is $\bbeta(\widehat{\bx})\neq 0$, where
$\widehat\bx$ is the solution of~\eqref{pb_omega_k} with $k = \widehat{k}$.
Since $\widehat{\bx}=\bx^*$, this contradicts the optimality of~$\bx^*$.
Therefore, $\bx^k$ is a proportional iterate for $k \ge \widehat{k}$
and P2GP will use the algorithm of the minimization phase to
determine the next iterate. Two cases are possible:
\begin{itemize}[leftmargin=.65cm]
\item[(a)] $\bx^{k+1} = \bx^*$, therefore the thesis holds;
\item[(b)] $\bx^{k+1} \neq \bx^*$ is proportional and such that $\mathcal{A}(\bx^k) \subsetneq 
\mathcal{A}(\bx^{k+1})$, therefore $\bx^{k+2}$ will be computed using again the algorithm
of the minimization phase. Since the active sets are nested, either P2GP is able to find 
$\mathcal{A}^*$ in a finite number of iterations or at a certain iteration it falls in case (a),
and hence the thesis is proved.
\end{itemize}
\vspace*{-3.8mm} \hfill \end{proof}

\vspace*{2pt}
\subsection{Identification phase\label{sec:identification}}

In the identification phase (Steps~\ref{alg:ident_start}-\ref{alg:ident_end} of 
Algorithm~\ref{alg:P2GP}), every projected gradient step needs the computation of
a steplength $\alpha^k$ satisfying the sufficient decrease
condition~\eqref{SuffDecrCond1}-\eqref{SuffDecrCond3}. According to~\cite{More:1989},
this steplength can be obtained by generating a sequence
$\{ \alpha_i^k \}$ of positive trial values such that
\begin{eqnarray}
& & \qquad \qquad \alpha_0^k \in [\gamma_2, \gamma_1] \label{safeguard1} \\
& & \alpha_i^k \in [ \gamma_4 \alpha_{i-1}^k, \gamma_5 \alpha_{i-1}^k], \quad i>0, \label{safeguard2}
\end{eqnarray}
where $\gamma_1$ and  $\gamma_2$ are given in~\eqref{SuffDecrCond2} and
$\gamma_4 < \gamma_5 < 1$,
and by setting $\alpha^k$ to the first trial value that satisfies~\eqref{SuffDecrCond1}.
Note that in practice $\gamma_2$ is a very small value and $\gamma_1$ is a very large one;
therefore, we assume for simplicity that~\eqref{safeguard2} holds for all the choices of
$\alpha_0^k$ described next.

Motivated by the results reported in~\cite{Dai:2005} for BQPs, we compute
$\alpha^k_0$ by using a BB-like rule. 
Following recent studies on steplength selection in gradient
methods~\cite{diSerafino:2016,diSerafino:2017}, we set  $\alpha^k_0$ equal to the
${\rm ABB}_{\rm min}$ steplength proposed in~\cite{Frassoldati:2008}:
\begin{equation} \label{ABBmin}
  \alpha^k_{{\rm ABB}_{\rm min}}=
  \left\{ \!\!
  \begin{array}{ll}
  \min\left\{\alpha^j_{\rm BB2} \, : \, j=\max\{m,k-q\},\ldots,k\right\} & 
       \displaystyle  \mbox{if  } \,  \frac{ \alpha^k_{\rm BB2}}{ \alpha^k_{\rm BB1}} <\tau, \\[3mm]
  \alpha^k_{\rm BB1} & \mbox{otherwise},
  \end{array}
  \right.
\end{equation}
where $m$ is defined in Step~\ref{alg:ident_start} of Algorithm~\ref{alg:P2GP},
$q$ is a nonnegative integer, $\tau\in(0,1)$, and \\[-10pt]
$$
   \alpha^k_{\rm BB1} = \frac{\Vert \bs^{k-1}\Vert^2}{(\bs^{k-1})^T\by^{k-1}}, \qquad
   \alpha^k_{\rm BB2} =  \frac{(\bs^{k-1})^T\by^{k-1}}{\Vert \by^{k-1}\Vert^2},
$$
with ${\bs^{k-1}} =\bx^k -\bx^{k-1}$ and ${\by^{k-1}}=\nabla f^k - \nabla f^{k-1}$.
Details on the rationale behind the criterion used to switch between the BB1 and BB2 steplengths
and its effectiveness are given in~\cite{Frassoldati:2008,diSerafino:2017}.
%
%

If $\alpha^k_0 > 0$, we build the trial steplengths by using a quadratic interpolation
strategy with the safeguard~\eqref{safeguard2} (see, e.g.,~\cite{More:1989}). If $\alpha^k_0 \leq 0$,
we check if $(\nabla_\Omega f^k)^T\,H\,(\nabla_\Omega f^k)\leq 0$, which implies that
the problem
$$
\begin{array}{rl}
\min                    & f(\bx_k+\bv), \\[2pt]
\mbox{\rm s.t.}   & \bq^T\bv=0, \;\;\, v_i=0 \, \mbox{ if } i \in \mathcal{B}^k
\end{array}
$$
is unbounded below along the direction $\nabla_\Omega f^k$. In this case we compute
the breakpoints along $\nabla_{\Omega} f^k$ \cite{More:1989}.
For any $\bx \in \Omega$ and any direction $\bp \in T_{\Omega}(\bx)$,
the breakpoints $\omega_i$, with $i \in \{ j \, : \, p_j \ne 0 \}$, are given by
the following formulas:
\begin{eqnarray*}
& & \mbox{if } p_i<0, \mbox{ then } \omega_i = + \infty \mbox{ if } l_i = -\infty, \mbox{ and } 
\omega_i = \frac{l_i -x_i}{p_i} \mbox{ otherwise}; \\
& & \mbox{if } p_i>0, \mbox{ then } \omega_i = + \infty \mbox{ if } u_i = +\infty, \mbox{ and } 
\omega_i = \frac{u_i -x_i}{p_i} \mbox{ otherwise}.
\end{eqnarray*}
If the minimum breakpoint, which equals $\max \left\lbrace \alpha > 0 \; \mbox{ s.t.} \;\,
\bx^k-\alpha \nabla_\Omega f^k\in\Omega \right\rbrace$, is infinite,
then problem~\eqref{SLBQP} is unbounded. Otherwise, we set 
$ \alpha^k_0 = \bar\omega$,
where $\bar \omega$ is the maximum finite breakpoint.
If $\alpha^k_0$ does not satisfy the sufficient decrease condition,
we reduce it by backtracking until this condition holds.
%
%
Finally, if $\alpha^k_0 \leq 0$ and $(\nabla_\Omega f^k)^T\,H\,(\nabla_\Omega f^k) > 0$,
we set
$$
     \alpha^k_0 = 
     - \frac{(\nabla_\Omega f^k)^T\nabla f^k}{(\nabla_\Omega f^k)^T H\,(\nabla_{\Omega} f^k)},
$$
and proceed by safeguarded quadratic interpolation.\footnote{In Algorithm~\ref{alg:P2GP}
we do not explicitly consider $\alpha^k_0$ in order to simplify the description.}


The identification phase is terminated according to the conditions described at
the beginning of Section~\ref{sec:method}.

\subsection{Minimization phase\label{sec:minimization}}

The minimization phase (Steps~\ref{alg:minim_start}-\ref{alg:minim_end} of
Algorithm \ref{alg:P2GP}) requires the approximate solution of
$$
\begin{array}{rl}
\min &  f(\bx^k + \bd) , \\[1pt]
\mbox{s.t.} & \bq^T\bd=0, \;\;\, d_i=0 \; \mbox{ if } i \in \mathcal{A}(\bx^k) ,
\end{array}
$$
\par \vskip -4pt
\noindent which is equivalent to \\[-14pt]
\begin{equation} \label{subplSLBQP}
\begin{array}{rl}
\min & \displaystyle g (\by) := \frac{1}{2} \, \by^T H_{\mathcal{FF}} \, \by + (\nabla f_{\mathcal{F}}^k)^T \by, \\[4pt]
\mbox{s.t.} & \bq_{\mathcal{F}}^T \, \by=0, \;\;\, \by \in {\Re}^{s},
\end{array}	
\end{equation}
where $\mathcal{F} = \mathcal{F}^k$ and $s = \vert \mathcal{F} \vert$. 

Problem~\eqref{subplSLBQP} can be 
formulated as an unconstrained quadratic minimization problem by using a Householder
transformation
$$
P = I - \bw \bw^T \in \Re^{s \times s}, \quad \Vert \bw\Vert=\sqrt 2, 
\quad P \bq_{\mathcal{F}} = \sigma \be_1,
$$
where $\sigma = \pm \| \bq_{\mathcal{F}} \|$ (see, e.g., \cite{Bjorck:1996}).
Letting 
$\by = P\bz$, $M = P H_{\mathcal{FF}} P$ and $\br = P \nabla f_{\mathcal{F}}^k$, 
problem~\eqref{subplSLBQP} becomes \\[-16pt]
$$
\begin{array}{rl}
\min & p(\bz) := \displaystyle \frac{1}{2} \, \bz^T M \, \bz + \br^T \bz , \\[2pt]
\mbox{s.t.} & z_1 = 0,
\end{array}	
$$
\par \vskip -4pt
\noindent which simplifies to \\[-14pt]
\begin{equation} \label{defsubp1unc}
   \min_{\widetilde{\bz} \in \Re^{s-1}} \widetilde{p}(\widetilde{\bz}) :=
           \frac{1}{2} \, \widetilde{\bz}^T \widetilde{M} \, \widetilde{\bz} +
          \widetilde{\br}^T \widetilde{\bz},
\end{equation}
\par \vskip -4pt
\noindent where \\[-12pt]
$$
M = \left(  \begin{array}{cc} m_{11} & \widetilde{\bm}^T  \\ 
\widetilde{\bm} & \widetilde{M} \end{array} \right) \!, 
\quad
\br = \left( \begin{array}{c} r_1\\
\widetilde{\br} \end{array} \right) \!,
\quad
\bz = \left( \begin{array}{c} z_1\\
\widetilde{\bz} \end{array} \right) \!.
$$

We note that $\bq_\mathcal{F} = \sigma P \be_1$ , i.e.,  $\bq_\mathcal{F}$
is a multiple of the first column of $P$, and hence the remaining columns of
$P$ span $\{ \bq_\mathcal{F} \}^\perp$. Furthermore, a simple computation shows that
$\widetilde{M} = \widetilde{P}^T H_{\mathcal{F}\mathcal{F}} \widetilde{P}$, where
$\widetilde{P}$ is the matrix obtained by deleting the first column of $P$.
By reasoning as in the proof of Theorem~\ref{th:Dostal3.2} (see~\eqref{zetamin}),
we find that $\zeta_{min}(\widetilde{M}) \ge \zeta_{min}(H_q)$ and
$\zeta_{max}(\widetilde{M}) \le \zeta_{max}(H_q)$, 
where $H_q = V^T H V$ and $V \in \Re^{n \times (n-1)}$ is any matrix with orthogonal columns
spanning $\{ \bq \}^\perp$. Therefore, if $H_q$ is positive definite, then \\[-6mm]
$$
\kappa (\widetilde{M}) \le \kappa (H_q).
$$
For any other $Z \in \Re^{n \times (n-1)}$ with orthogonal columns
spanning $\{ \bq \}^\perp$, we can write $V^T = D Z^T$ with $D \in \Re^{(n-1) \times(n-1)}$ orthogonal;
therefore, $V^T H V$ and $Z^T H Z$ are similar and $\kappa (H_q)$ does not depend
on the choice of the orthonormal basis of $\{ \bq \}^\perp$. Furthermore,
if $H$  is positive definite, by the Cauchy's interlace theorem \cite[Theorem~10.1.1]{Parlett:1998}
it is $\kappa(H_q) \leq \kappa(H)$.

The finite convergence results presented in Section~\ref{sec:method} for strictly convex
problems rely on the exact solution of~\eqref{defsubp1unc}. In infinite precision,
this can be achieved by means of the CG algorithm, as in the GPCG method. Of course,
in presence of roundoff errors, finite convergence is generally neither obtained nor required.

We can solve~\eqref{defsubp1unc} by efficient gradient methods too. In this work, we investigate the use of the SDC
gradient method~\cite{Deasmundis:2014} as a solver for the minimization phase
in the strictly convex case. The SDC method uses the following steplength:
\begin{equation} \label{alpha_sdc}
\alpha^k_{\rm SDC}= \left\{ \!\!
\begin{array}{ll}
    \alpha^k_{\rm C} & \textrm{if mod}\left( k, \bar{k} +l \right) < \bar{k}, \\[2pt]
    \alpha^t_{Y} & \textrm{otherwise, with} \; t \; =\textrm{max}\{i\leq k  \, : \,
            \textrm{mod}\left( i, \bar{k} +l  \right)=\bar{k}\},
\end{array}
\right.
\end{equation}
where $ \bar{k} \ge 2$, $l \ge 1$, $\alpha^k_{\rm C}$ is the Cauchy steplength and
\begin{equation} \label{yuan_step}
   \alpha^t_{\rm Y} = 2 \left( \sqrt{ \left( \frac{1}{\alpha^{t-1}_{\rm SD}} - \frac{1}{\alpha^t_{\rm SD}} \right)^2
   + 4\frac{\Vert \nabla f^t \Vert^2} {\left( \alpha^{t-1}_{\rm SD} \Vert \nabla f^{t-1} \Vert \right)^2 } }+
   \frac{1}{\alpha^{t-1}_{\rm SD}}+\frac{1}{\alpha^t_{\rm SD}} \right)^{-1}
\end{equation}
is the Yuan  steplength~\cite{Yuan:2006}.
The interest for this steplength is motivated by its spectral properties, which dramatically speed up
the convergence~\cite{Deasmundis:2014,diSerafino:2017}, while showing
certain regularization properties useful to deal with linear ill-posed
problems~\cite{Deasmundis:2016}. Similar properties hold for the SDA gradient
method~\cite{Deasmundis:2013}, but for the sake of space we do not show the results of
its application in the minimization phase. It is our opinion that the P2GP framework provides
also a way to exploit these methods when solving linear ill-posed problems with bounds
and a single linear constraint. 

Once a descent direction $\bd^k$ is obtained by using CG
or SDC,  a full step along this direction is performed starting from $\bx^k$,
and $\bx^{k+1}$ is set equal to the resulting point if this is feasible. Otherwise
$\bx^{k+1} = P_{\Omega^k}(\bx^k + \alpha^k \bd^k)$ where $\alpha^k$ satisfying the sufficient
decrease conditions is computed by using safeguarded quadratic interpolation~\cite{More:1991}.

If the problem is not strictly convex, we choose the CG method for the minimization phase. 
If CG finds a direction $\bd^k$ such that $ (\bd^k)^T H \, \bd^k \leq 0$ we set
$\bx^{k+1} = \bx^k + \alpha^k \bd^k$, where $\alpha^k$ is the largest feasible steplength,
i.e., the minimum breakpoint along $\bd^k$,
unless the objective function results to be unbounded along $\bd^k$.

As already observed, the stopping criterion in the solution of problem~\eqref{defsubp1unc}
must not be too stringent, since the decision of continuing the minimization on the
reduced space is left to the proportionality criterion. In order to stop the solver for
problem~\eqref{defsubp1unc}, we check the progress in the reduction of the
objective function as in the identification phase, i.e., we terminate
the iterations if
\begin{equation} \label{stopCG}
\widetilde{p}(\widetilde{\bz}^j)- \widetilde{p}(\widetilde{\bz}^{j+1})  \le \xi\, 
\max_{1 \le l < j} \left\{ \widetilde{p}(\widetilde{\bz}^l)  - \widetilde{p}(\widetilde{\bz}^{l+1}) \right\} ,
\end{equation}
where $\xi \in (0,1)$ is not too small (the value used in the numerical experiments is given in
Section~\ref{sec:experiments}). This choice follows~\cite{More:1991}. If the active set
has not changed and the current iterate is proportional, the minimization phase does not restart
from scratch, but the minimization method continues its iterations as it had not been stopped.

\subsection{Projections\label{sec:projection}}

P2GP requires projections onto $\Omega$ (Step~\ref{alg:proj1}  of Algorithm~\ref{alg:P2GP}),
onto $\Omega^k = \Omega\,\cap\,\Omega(\bx^k)$ (Step~\ref{alg:proj2} of Algorithm~\ref{alg:P2GP}),
and onto $T_\Omega(\bx^k)$ (for the computation of $\bbeta(\bx^k)$).
%
%
We perform the projections by using the algorithm proposed by Dai and Fletcher in~\cite{Dai:2006a}.

\section{Numerical experiments\label{sec:experiments}}

In order to analyze the behavior of P2GP using both CG and SDC in the minimization phase,
we performed numerical experiments on several problems, either generated with the aim of
building test cases with varying characteristics (see Section~\ref{sec:synthpbs}) or coming from
SVM training (see Section~\ref{sec:svmpbs}).

On the first set of problems, referred to as random problems because of the way they are
built, we compared both versions of P2GP with the following methods:
\begin{itemize}
\item 
GPCG-like, a modification of P2GP where the termination of the minimization phase
(performed by CG) is not driven by the proportionality criterion, but by the bindingness
of the active variables, like in the GPCG method;
\item
PABB$_{\rm min}$, a Projected Alternate BB method executing the line search as in P2GP
and computing the first trial steplength with the ${\rm ABB}_{\rm min}$ rule described in
Section~\ref{sec:identification};
\end{itemize}
The first method was selected to evaluate the effect of the proportionality-based criterion in the
minimization phase, the second one because of its effectiveness among
general GP methods. P2GP,  GPCG-like, and PABB$_{\rm min}$ were implemented in Matlab.

To further assess the behavior of P2GP, we also compared it, on the random and
SVM problems, with the GP method implemented in BLG, a C code available from
\url{http://users.clas.ufl.edu/hager/papers/Software/}. BLG solves
nonlinear optimization problems with bounds and a single linear constraint,
and can be considered as a benchmark for software based on gradient methods.
Its details are described in~\cite{Hager:2006,GonzalezLima:2011}.

The following setting of the parameters was considered for P2GP:
$\eta = 0.1$ in~\eqref{stopGP} and $\xi = 0.5$ in~\eqref{stopCG};
$\mu_1 = 10^{-4}$ in~\eqref{SuffDecrCond1};
$\gamma_1=10^{12}$, $\gamma_2=10^{-12}$, $\gamma_3=10^{-2}$, and $\gamma_4=0.5$ 
in~\eqref{safeguard1}-\ref{safeguard2}; $q=3$ and $\tau=0.2$ in~\eqref{ABBmin}.
Furthermore, when SDC was used in the minimization phase, $\bar{k} = 6$ and $l=4$ were chosen
in~\eqref{alpha_sdc}. A maximum number of 50 consecutive GP and CG (or SDC) iterations was
also considered. The previous choices were also used for
the GPCG-like method, except for the parameter $\xi$, which was set to 0.25. The parameters of PABB$_{\rm min}$
in common with P2GP were given the same values too, except $\tau$, which was computed by the adaptive
procedure described in~\cite{Bonettini:2009}, with 0.5 as starting value. Details on the stopping conditions
used by the methods are given in Sections~\ref{sec:synthresults} and \ref{sec:svmresults}, where the results
obtained on the test problems are discussed.

About the proportionality condition~\eqref{proportioning},
a conservative approach would suggest to adopt a large value for $\Gamma$.
However, such a choice is likely to be unsatisfactory in practice; in fact,
a large $\Gamma$ would foster high accuracy in
the minimization phase, even at the initial steps of the
algorithm, when the active constraints at the solution are far from
being identified. Thus, we used the following adaptive strategy
for updating $\Gamma$ after line 37 of Algorithm~\ref{alg:P2GP}:
\smallskip
\begin{center}
  \begin{algorithmic}
	\If {$\Vert \bbeta^{k} \Vert_\infty > \Gamma \,\Vert \bphi^{k} \Vert_2$ }
	   \State {$ \Gamma = \max \left\lbrace 1.1\cdot\Gamma ,\;1  \right\rbrace;$}
	\ElsIf {$\mathcal{A}^k \neq \mathcal{A}^{k-1}$}
	   \State {$ \Gamma = \max \left\lbrace 0.9\cdot\Gamma ,\;1  \right\rbrace;$}
	\EndIf
  \end{algorithmic}
\end{center}
\par
\vskip 1pt
\noindent
Based on our numerical experience, we set the starting value of $\Gamma$ equal to 1.

BLG was run using the gradient projection search direction
(it also provides the Frank-Wolfe and affine-scaling directions). However, the code could
switch to the Frank-Wolfe direction, according to inner automatic criteria. Note that BLG uses a cyclic BB
steplength $\bar\alpha^k$ as trial steplength, together with an adaptive nonmonotone line search along the
feasible direction $P_{\Omega}(\bx^k - \bar\alpha^k \nabla f^k) - \bx^k$ (see \cite{Hager:2006}
for the details). Of course, the BLG features exploiting the form of a quadratic objective function were used.
The stopping criteria applied with the random problems and the SVM ones are specified
in Sections~\ref{sec:synthresults} and \ref{sec:svmresults}, respectively. Further details on the 
use of BLG are given there.

All the experiments were carried out using a 64-bit Intel Core i7-6500, with maximum clock frequency
of 3.10 GHz, 8 GB of RAM, and 4 MB of cache memory. BLG (v.~1.4) and SVMsubspace (v.~1.0)
were compiled by using gcc 5.4.0. P2GP, GPCG-like, and PABB$_{\rm min}$ were run under MATLAB 7.14 (R2012a).
The elapsed times reported for the Matlab codes were measured by using the \texttt{tic} and
\texttt{toc} commands.

\subsection{Random test problems\label{sec:synthpbs}}

The implementations of all methods were run on random SLBQPs built
by modifying the procedure for generating BQPs proposed in~\cite{More:1989}.
The new procedure first computes a point $\bx^*$
and then builds a problem of type~\eqref{SLBQP} having $\bx^*$ as stationary point.
Obviously, if the problem is strictly convex, $\bx^*$ is its solution. The
following parameters are used to define the problem:
\begin{itemize}
\item \texttt{n}, number of variables (i.e., $n$);
\item \texttt{ncond}, $\log_{10} \kappa (H)$;
\item \texttt{zeroeig} $ \in[0, 1)$, fraction of zero eigenvalues of $H$;
\item \texttt{negeig} $ \in[0, 1)$, fraction of negative eigenvalues of $H$;
\item \texttt{naxsol} $ \in[0, 1)$, fraction of active variables at $\bx^*$;
\item \texttt{degvar} $\in[0,1)$, fraction of active variables at $\bx^*$ that are degenerate; 
\item \texttt{ndeg} $\in\lbrace0,1,2,\ldots\rbrace$, amount of near-degeneracy;
\item \texttt{linear}, 1 for SLBQPs, and 0 for BQPs;
\item \texttt{nax0} $\in[0,1)$, fraction of active variables at the starting point.
\end{itemize}

The components of $\bx^*$ are computed as random numbers from the uniform
distribution in $(-1,1)$.
All random numbers considered next are from uniform distributions too.
The Hessian matrix $H$  is defined as \\[-14pt]
\begin{equation}
\label{eq:randhess}
H = G\,D\,G^T, 
\end{equation}
\par \vskip -3pt
\noindent where $D$ is a diagonal matrix and
$G = ( I - 2 \,\bp_3 \bp_3^T) (I - 2\,\bp_2 \bp_2^T) ( I - 2\,\bp_1 \bp_1^T)$,
with $\bp_j$ unit vectors. For $j=1,2,3$, the components of $\bp_j$ are obtained by
generating $\bar \bp_j = (\bar p_{ji} )_{i=1,\ldots,n}$, where the values $\bar p_{ji}$ are
random numbers in $(-1,1)$, and setting $\bp_j = \bar \bp_j / \| \bar \bp_j \|$. 
The diagonal entries of $D$ are defined as follows:
$$
   d_{ii} = \left\{ \begin{array}{ll} 
     \;\;\; 0    & \mbox{for approximately }  \mathtt{zeroeig} * \mathtt{n} \mbox{ values of } i, \\
     - 10^{\frac{i-1}{n-1}(\mathtt{ncond})} & \mbox{for approximately } 
                                         \mathtt{negeig} * \mathtt{n} \mbox{ values of } i, \label{dneg} \\
     \;\;\; 10^{\frac{i-1}{n-1}(\mathtt{ncond})} & \mbox{for the remaing values of } i .
   \end{array} \right.
$$
We note that \texttt{zeroeig} and \texttt{negeig} are not the actual fraction of zero and negative
eigenvalues. The actual fraction of zero eigenvalues is determined by generating
a random number $\xi_i \in (0,1)$ for each $i$, and by setting $d_{ii} = 0$ if 
$\xi_i \le \mathtt{zeroeig}$; the same strategy is used to determine the actual number
of negative eigenvalues. We also observe that $\kappa (H) = 10^{\,\mathtt{ncond}}$, 
if $H$ has no zero eigenvalues.

In order to define the active variables at $\bx^*$, $n$ random
numbers $\chi _i\in (0,1)$ are computed, and the index $i$ is put in 
$\mathcal{A}^*$ if $\chi_i \leq \mathtt{naxsol}$; then $\mathcal{A}^*$~is partitioned into the sets
$\mathcal{A}^*_N$ and $\mathcal{A}^* \setminus \mathcal{A}^*_N$,
with $| \mathcal{A}^* \setminus \mathcal{A}^*_N |$ approximately equal to
$\lfloor \mathtt{degvar} *  \mathtt{naxsol} *  \mathtt{n} \rfloor$. More precisely,
an index $i$ is put in $\mathcal{A}^* \setminus \mathcal{A}^*_N$
if $\psi_i \le \mathtt{degvar}$, where $\psi_i$ is a random number in $(0,1)$,
and is put in $\mathcal{A}^*_N$ otherwise. The vector $\blambda^*$
of Lagrange multipliers associated with the box constraints at $\bx^*$ is
initially set as
\begin{equation*}
   \lambda_i^* = 
   \left\lbrace\begin{array}{ll}
        10^{-\mu_i\,\mathtt{ndeg}} & \mbox{if } \, i \in \mathcal{A}_N^*,\\
	0 & \mbox{otherwise},
    \end{array}\right.
\end{equation*}
where $\mu_i$ is a random number in $(0,1)$.  Note that the larger \texttt{ndeg},
the closer to 0 is the value of $\lambda_i^*$, for $i \in \mathcal{A}_N^*$ (in this sense
\texttt{ndeg} indicates the amount of near-degeneracy).
The set $\mathcal{A}^*$ is splitted into $\mathcal{A}_l^*$ and $\mathcal{A}_u^*$ as follows:
for each $i \in \mathcal{A}^*$, a random number $\nu_i \in (0,1)$ is generated;
$i$ is put in $\mathcal{A}_l^*$ if $\nu_i < 0.5$, and in $\mathcal{A}_u^*$ otherwise.
Then, if $i \in \mathcal{A}_u^*$, the corresponding Lagrange multiplier is modified by setting
$\lambda_i^* = - \lambda_i^*$. The lower and upper bounds $\bl$ and $\bu$ are defined as
follows:
\vspace*{-1mm}
$$
\begin{array}{llll}
  l_i = -1     & \!\!\! \mbox{and} & \!\!\! u_i = 1       & \!\!\!\! \mbox{if } i \notin \mathcal{A}^*,\\
  l_i = x^*_i & \!\!\! \mbox{and} & \!\!\! u_i = 1       & \!\!\!\! \mbox{if } i \in \mathcal{A}_l^*, \\
  l_i = -1     & \!\!\! \mbox{and} & \!\!\! u_i = x^*_i & \!\!\!\!\mbox{if } i \in \mathcal{A}_u^*. 
\end{array}
$$

\vspace*{-1mm}
If $\mathtt{linear}=0$, the linear constraint is neglected. If $\mathtt{linear} = 1$, the
vector $\bq$ in~\eqref{SLBQP} is computed by randomly generating its components in
$(-1, 1)$, the scalar $b$ is set to $\bq^T \bx^*$, and the vector $\bc$ is defined
so that the KKT conditions at the solution are satisfied: \\[-16pt]
$$
   \bc = \left\lbrace \begin{array}{ll}
      H\,\bx^* - \blambda^*                         & \mbox{if } \mathtt{linear} = 0,\\
      H\,\bx^* - \blambda^* - \rho^* \,\bq  & \mbox{if } \mathtt{linear} = 1,
\end{array} \right.
$$
where $\rho^*$ is a random number in $(-1,1) \setminus \lbrace 0 \rbrace$
representing the Lagrange multiplier associated with the linear constraint.

By reasoning as with $\bx^*$, approximately $\mathtt{nax0}*\mathtt{n}$ components
of the starting point $\bx^0$ are set as $x_i^0 = l_i$ or $x_i^0 = u_i$.
The remaining components are defined as $x_i^0 = (l_i + u_i)/2$. Note that $\bx^0$
may not be feasible; in any case, it will be projected onto $\Omega$ by the optimization
methods considered here.

Finally, we note that although $\bx^*$ is a stationary point of
the problem generated by the procedure described so far,
there is no guarantee that P2GP converges to $\bx^*$
if the problem is not strictly convex.

The following sets of test problems, with size \texttt{n} $= 20000$, were generated:
\begin{itemize}
\item 
27 strictly convex SLBQPs with nondegenerate solutions, obtained by setting
\texttt{ncond} $= 4,5,6$, \texttt{zeroeig} $= 0$, \texttt{negeig} $=0$,
\texttt{naxsol} $= 0.1, 0.5, 0.9$,  \texttt{degvar} $= 0$, \texttt{ndeg} $= 0, 1, 3$, 
and \texttt{linear} $= 1$;
\item 18 strictly convex SLBQPs with degenerate solutions, obtained by setting
\texttt{ncond} $= 4,5,6$, \texttt{zeroeig} $= 0$, \texttt{negeig} $=0$,
\texttt{naxsol} $= 0.1, 0.5, 0.9$,  \texttt{degvar} $= 0.2, 0.5$, \texttt{ndeg} $= 1$, 
and \texttt{linear} $= 1$; 
\item 27 convex (but not stricltly convex) SLBQPs, obtained by setting
\texttt{ncond} $= 4,5,6$, \texttt{zeroeig} $= 0.1, 0.2, 0.5$, \texttt{negeig} $= 0$,
\texttt{naxsol} $= 0.1, 0.5, 0.9$,  \texttt{degvar} $= 0$, \texttt{ndeg} $= 1$, 
and \texttt{linear} $= 1$;
\item 27 nonconvex SLBQPs, obtained by setting
\texttt{ncond} $= 4,5,6$, \texttt{zeroeig} $= 0$, \texttt{negeig} $= 0.1, 0.2, 0.5$,
\texttt{naxsol} $= 0.1, 0.5, 0.9$,  \texttt{degvar} $= 0$, \texttt{ndeg} $= 1$, 
and \texttt{linear} $= 1$;
\end{itemize}
Since BQPs are special cases of SLBQPs, four sets of BQPs were also generated,
by setting \texttt{linear} $= 0$ and choosing all remaining parameters as specified above.
All the methods were applied to each problem with four starting points,
corresponding to \texttt{nax0} $= 0, 0.1, 0.5, 0.9$.

\subsection{SVM test problems\label{sec:svmpbs}}

SLBQP test problems corresponding to the dual formulation of two-class C-SVM
classification problems were also used (see, e.g.,~\cite{Vapnik:1982}). Ten problems from the LIBSVM data set,
available from \url{https://www.csie.ntu.edu.tw/~cjlin/libsvmtools/datasets/}, were considered, whose
details (size of the problem, features and nonzeros in the data) are given in Table~\ref{tab:svmpbs}.
A linear kernel was used, leading to problems with positive semidefinite Hessian matrices.
The penalty parameter C was set to 10.
For most of the problems, the number of nonzeros is much smaller
than the product between size and features, showing that the data are relatively sparse.

\begin{table}[h!]
	\centering
	{\small
	\begin{tabular}{|l|r|r|r|}
	         \hline
		problem  & {size} & {features} & {nonzeros} \\ \hline   
		a6a      &  11220 &        122 &     155608 \\             
		a7a      &  16100 &        122 &     223304 \\             
		a8a      &  22696 &        123 &     314815 \\             
		a9a      &  32561 &        123 &     451592 \\             
		ijcnn1   &  49990 &         22 &     649870 \\              
		phishing &  11055 &         68 &     331650 \\             
		real-sim &  72309 &      20958 &    3709083 \\             
		w6a      &  17188 &        300 &     200470 \\             
		w7a      &  24692 &        300 &     288148 \\             
		w8a      &  49749 &        300 &     579586 \\             
		\hline
	\end{tabular}
	\par \bigskip
	\caption{Details of the SVM test set. \label{tab:svmpbs}}
	}
\end{table}

\subsection{Results on random problems\label{sec:synthresults}}

We first discuss the results obtained by running the implementations
of the P2GP, PABB$_{\rm min}$ and GPCG-like methods on
the problems described in Section~\ref{sec:synthpbs}.
In the stopping condition~\eqref{accur_req}, $tol = 10^{-6} \| \bphi^0 + \bbeta^0 \|$
was used; furthermore, at most $30000$ matrix-vector products and
$30000$ projections were allowed, declaring failures
if these limits were achieved without satisfying condition~\eqref{accur_req}.
The methods were compared by using the performance profiles proposed by Dolan and
Mor\'e~\cite{DolanMore:2002}. We note that the performance profiles in this section
may show a number of failures larger than the actual one,
because the range on the horizontal axis has been limited
to enhance readability. However, all the failures will be explicitly reported in the text.

\begin{figure}[t]
   \centering
   \setlength{\tabcolsep}{-8pt}
   \begin{tabular}{cc}
        \hspace*{-9pt}
        \includegraphics[width=0.55\textwidth]{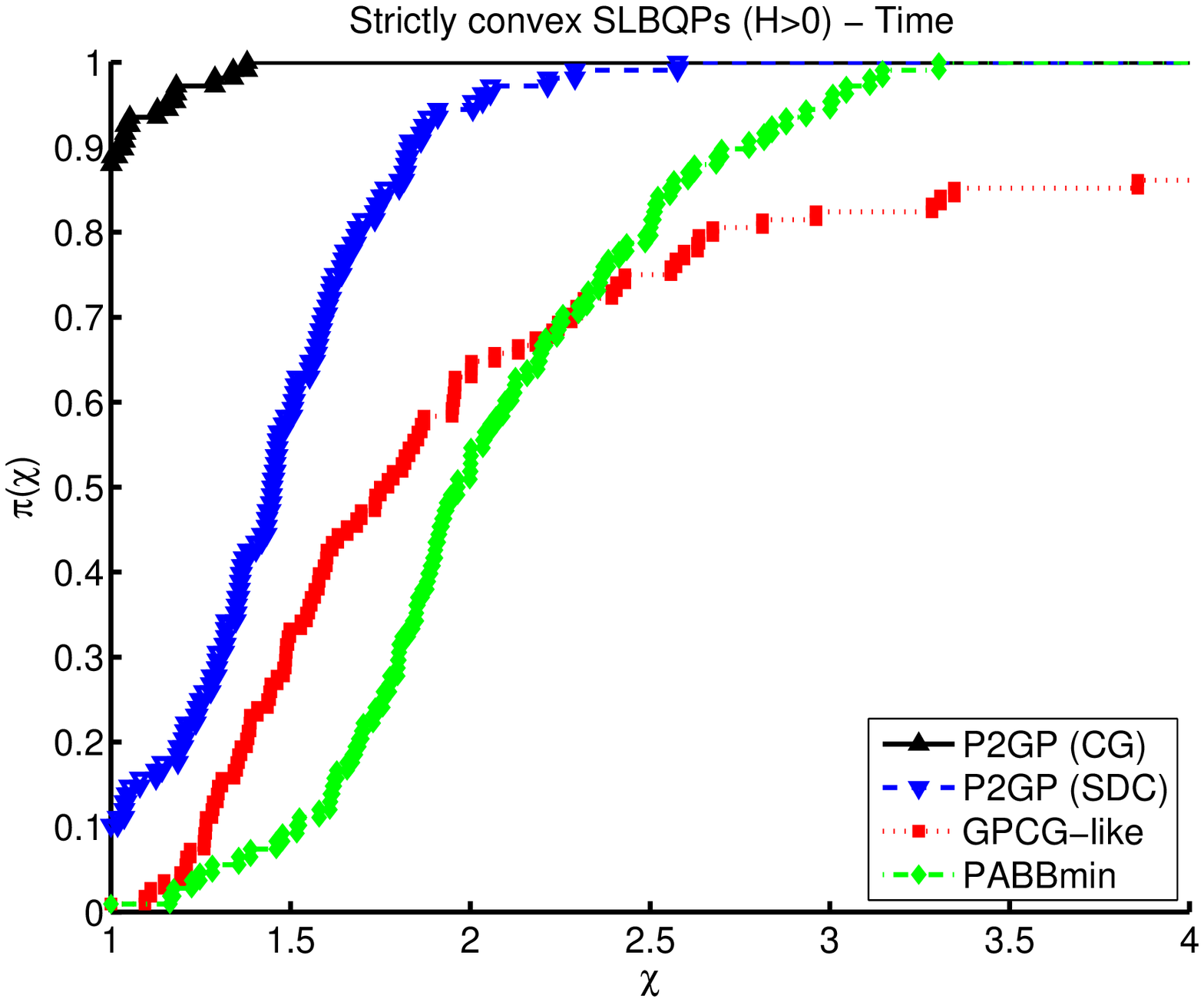} &
        \includegraphics[width=0.55\textwidth]{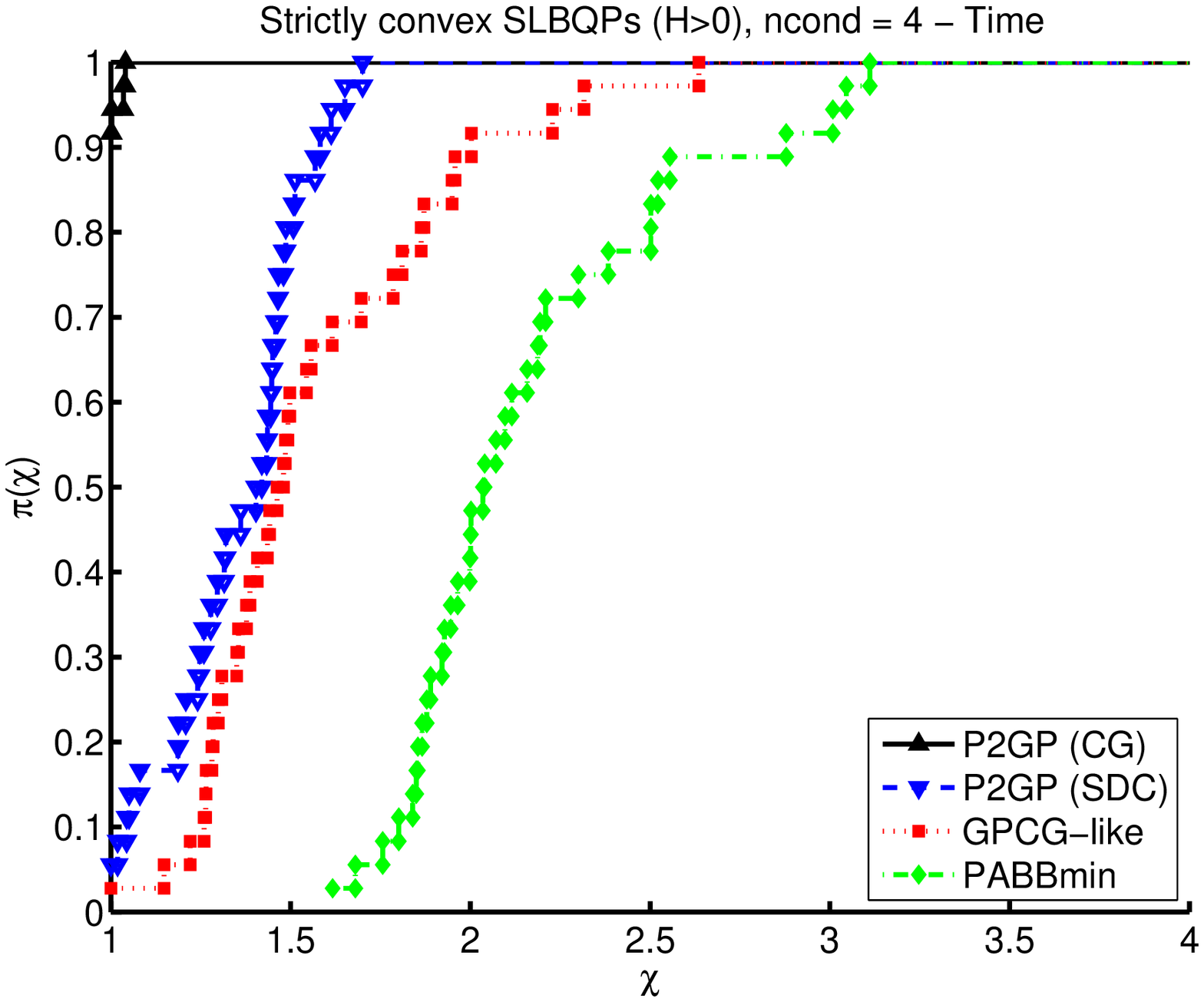} \\[-4pt]
        \hspace*{-9pt}
        \includegraphics[width=0.55\textwidth]{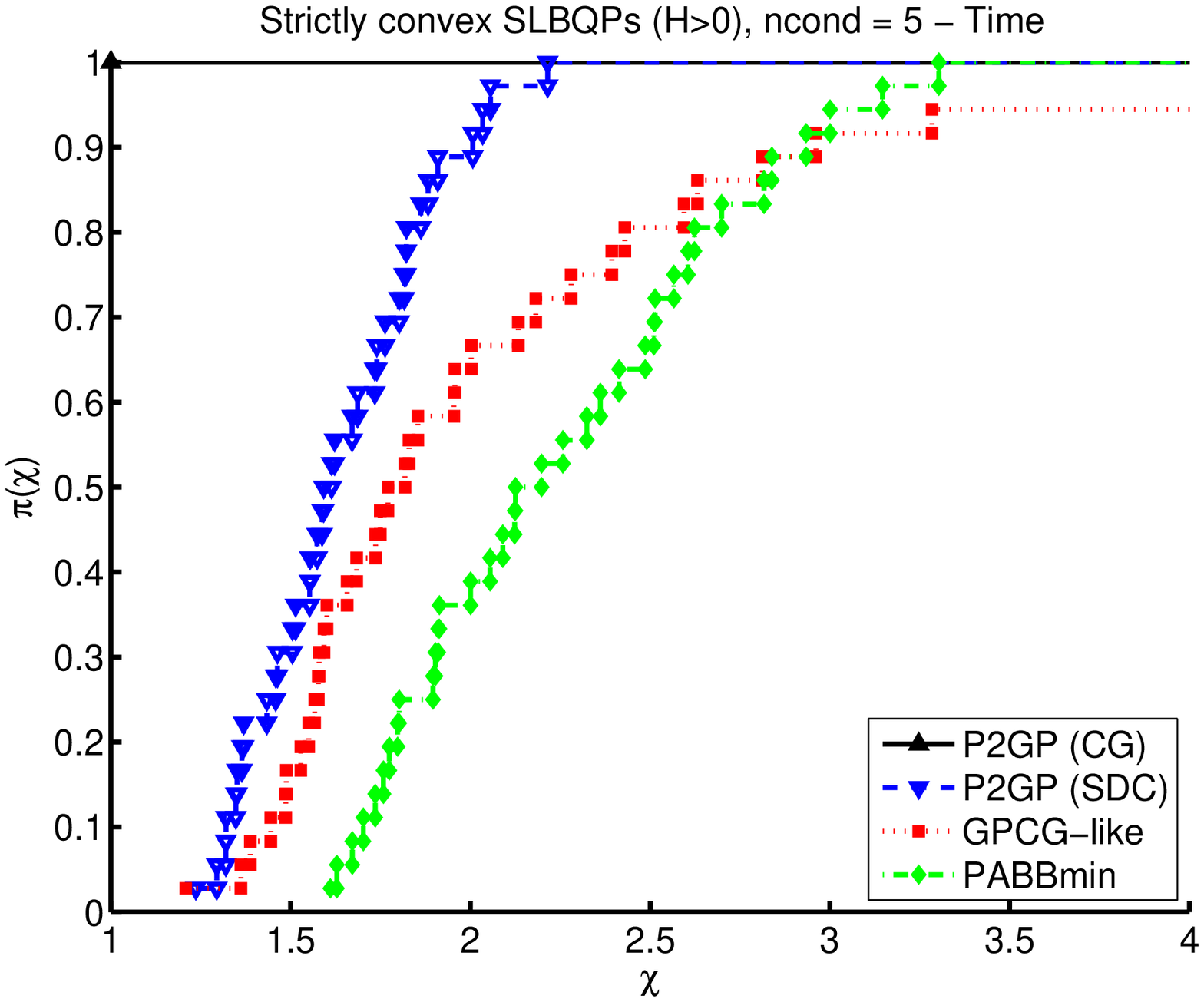}  &
	\includegraphics[width=0.55\textwidth]{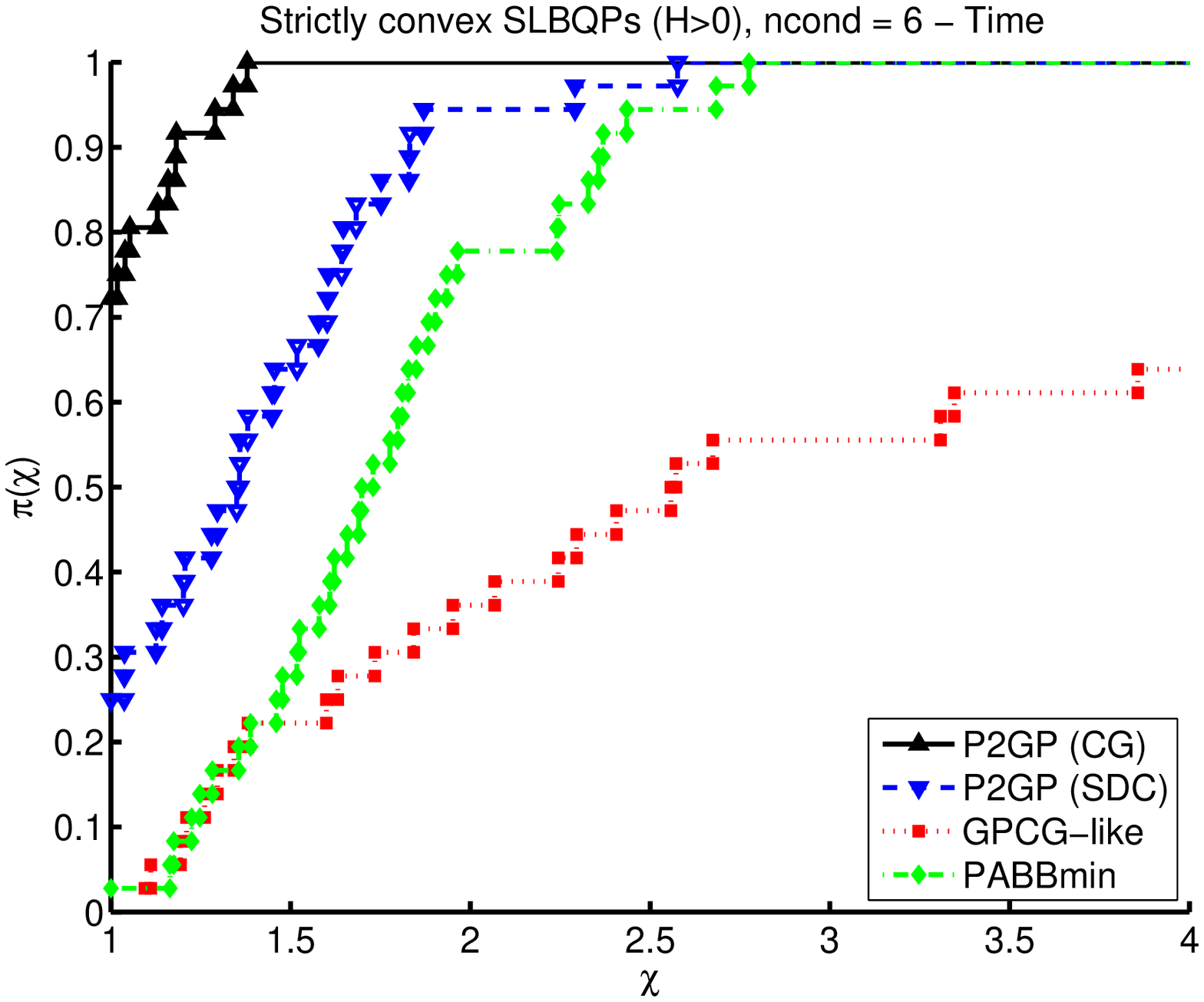} 
   \end{tabular}
   \vskip -10pt
   \caption{Performance profiles of P2GP with CG and SDC, PABB$_{\rm min}$,
and GPCG-like on strictly-convex SLBQPs with nondegenerate solutions:
execution times for all the problems (top left), for $\kappa(H) = 10^4$ (top right),
for $\kappa(H) = 10^5$ (bottom left), and for $\kappa(H) = 10^6$ (bottom right).
\label{fig:sconv_time}}
\end{figure}
\begin{figure}[t]
   \centering
      \setlength{\tabcolsep}{-8pt}
      \begin{tabular}{cc}
         \hspace*{-9pt}
         \includegraphics[width=0.55\textwidth]{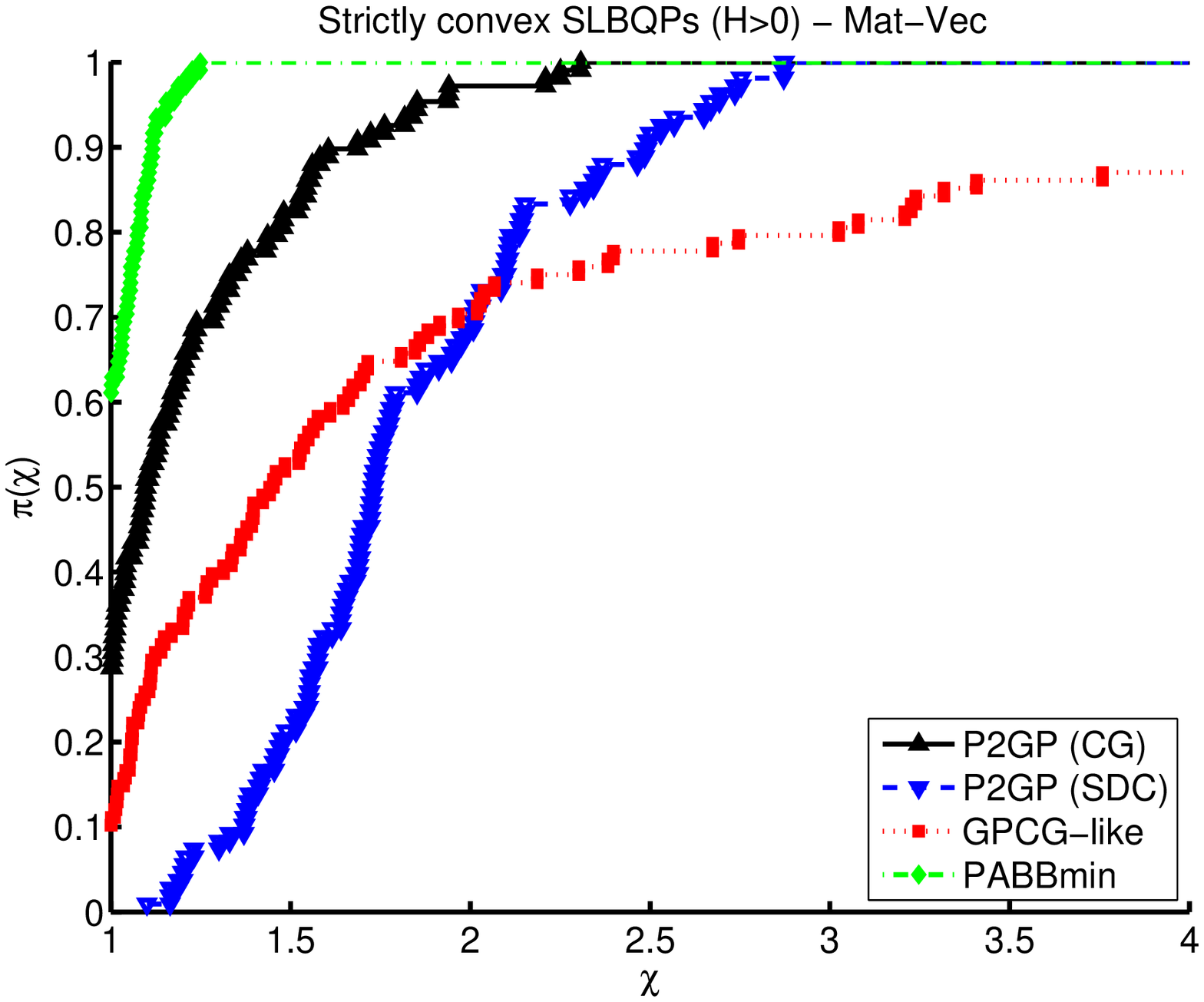} &
        \includegraphics[width=0.55\textwidth]{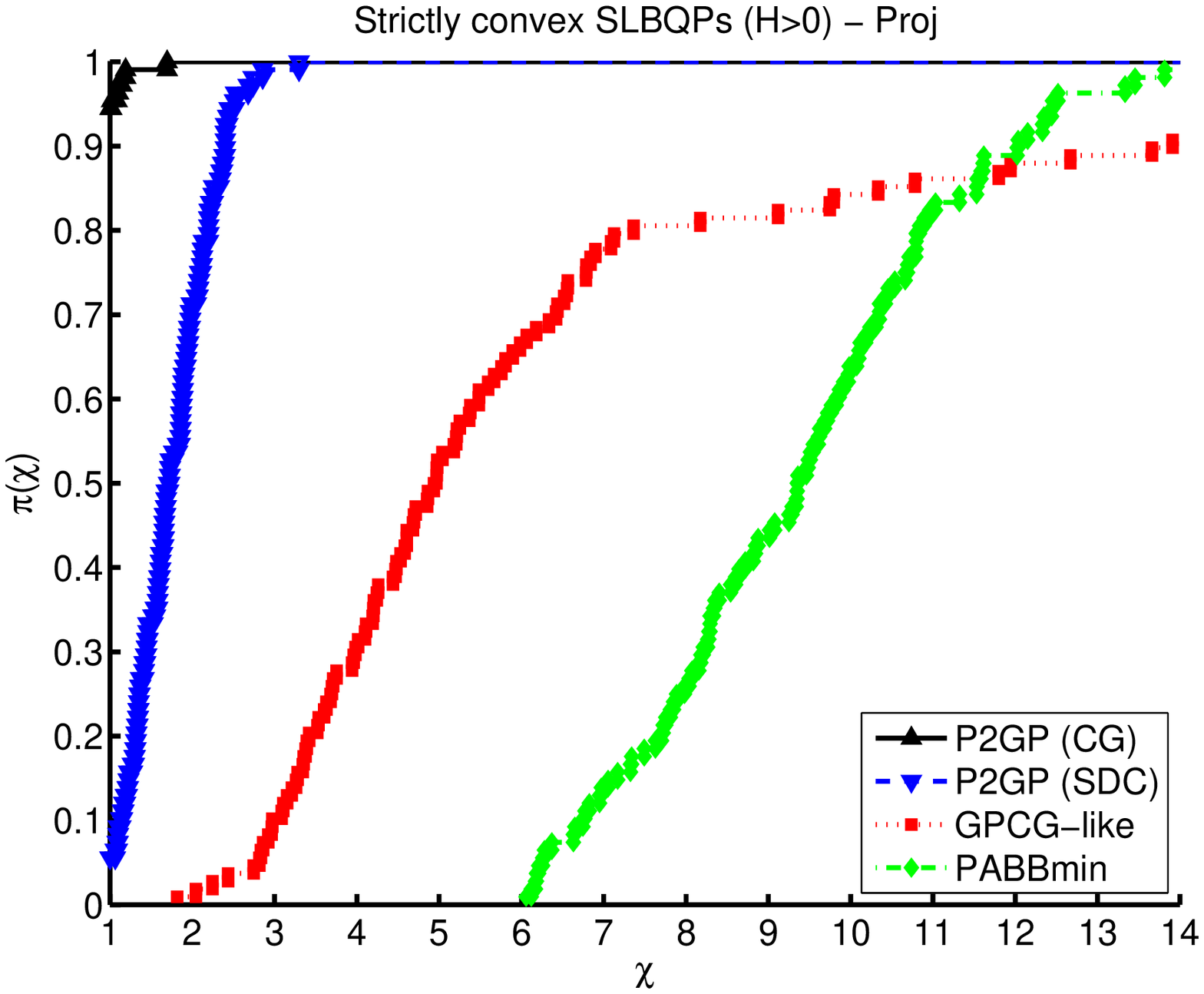}
   \end{tabular}
    \vskip -10pt
   \caption{Performance profiles of P2GP with CG and SDC, PABB$_{\rm min}$,
and GPCG-like on strictly convex SLBQPs with nondegenerate solutions:
number of matrix-vector products (left) and projections (right).
\label{fig:sconv_mv_p}}
\end{figure}

\begin{figure}[h!]
   \centering
   \hspace*{-9pt}
   \includegraphics[width=0.55\textwidth]{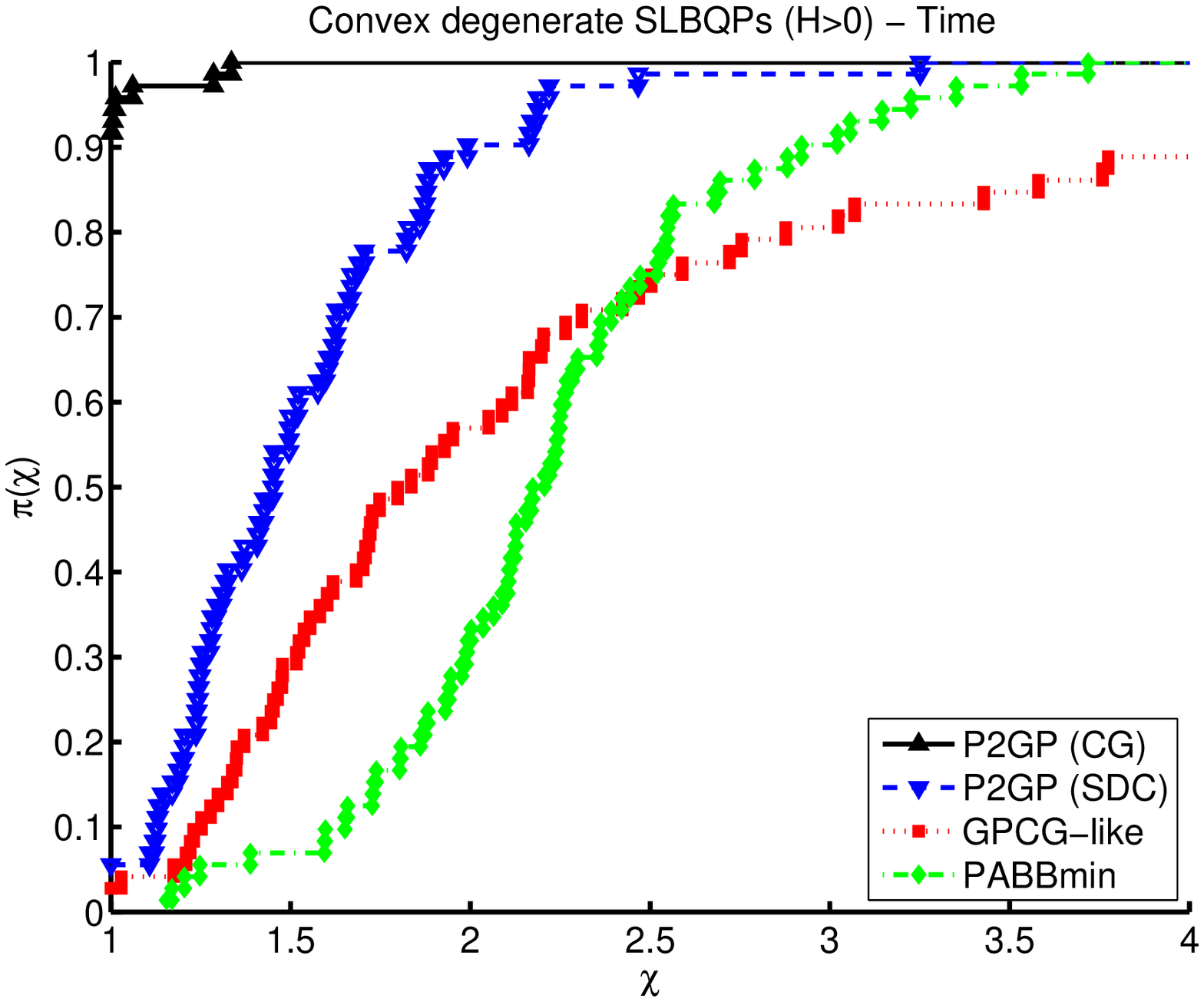} \\[-4pt]
   \setlength{\tabcolsep}{-8pt}
   \begin{tabular}{cc}
      \hspace*{-9pt}
      \includegraphics[width=0.55\textwidth]{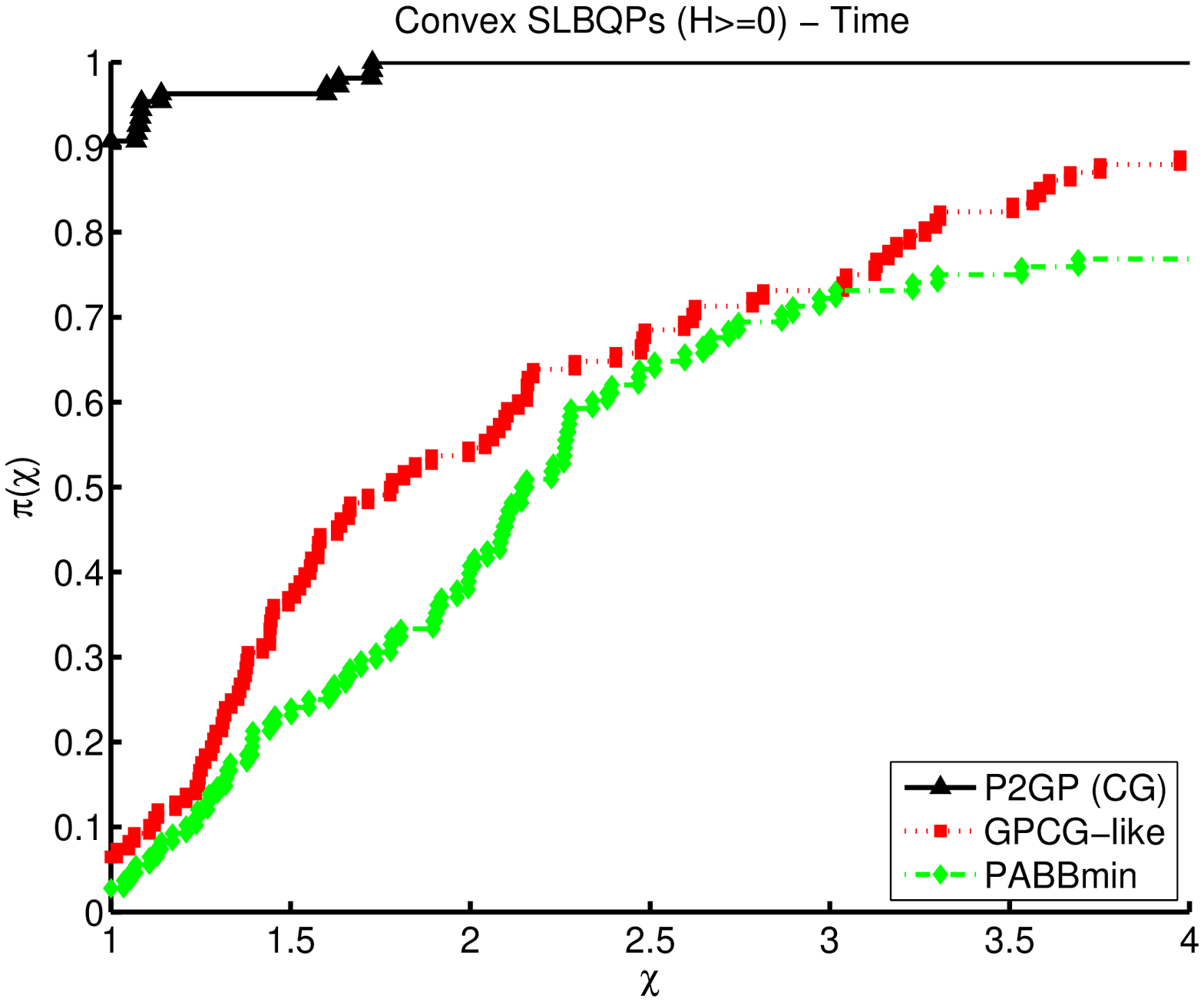} &
      \includegraphics[width=0.55\textwidth]{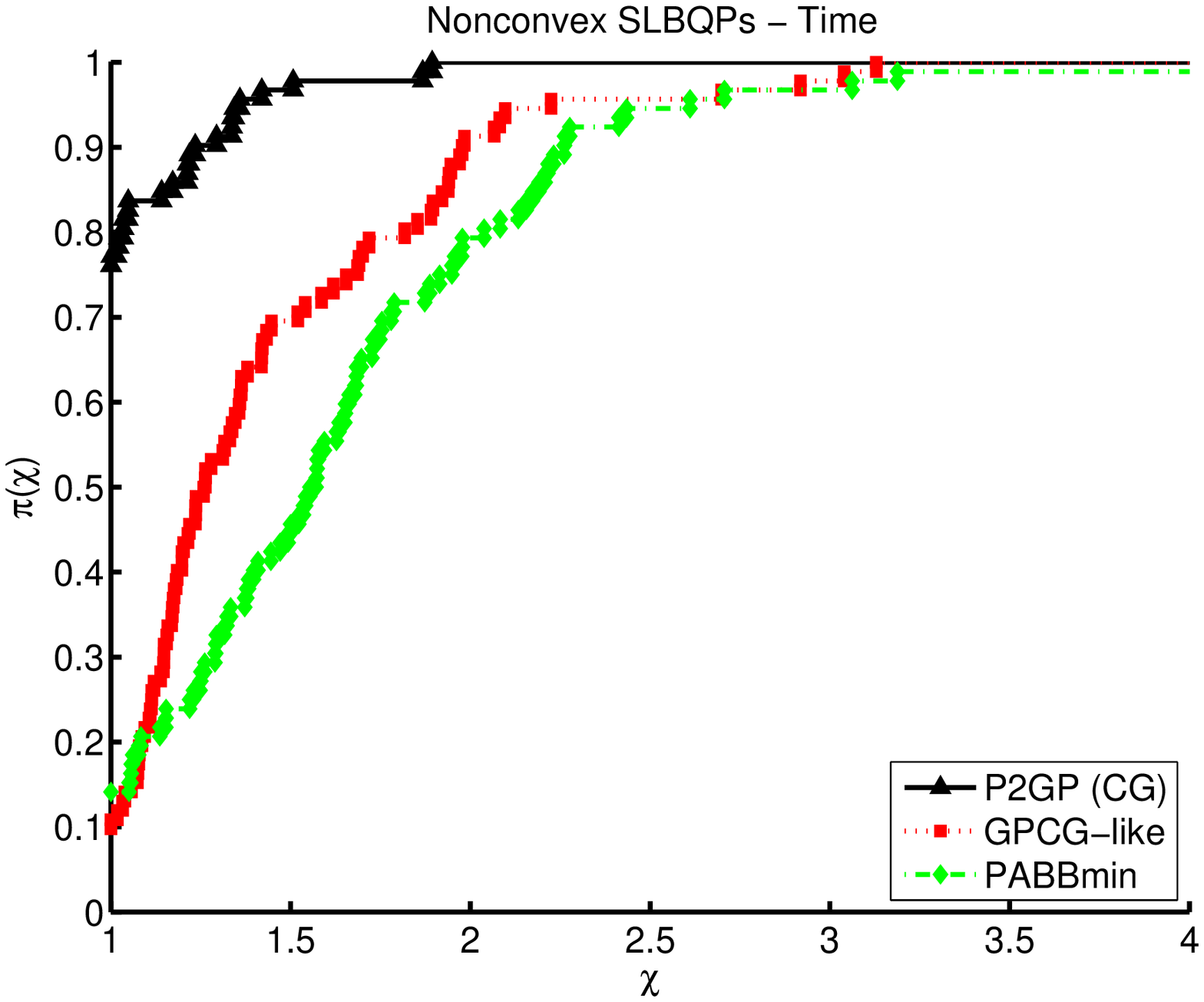}
   \end{tabular}
   \vskip -10pt
   \caption{Performance profiles (execution times) of P2GP with CG and SDC, 
PABB$_{\rm min}$, and GPCG-like on strictly convex SLBQPs with degenerate solutions
(top), convex SLBQPs (bottom left), nonconvex SLBQPs (bottom
right). \label{fig:deg_conv_nonconv_time}}
\end{figure}

Figure~\ref{fig:sconv_time} shows the performance profiles, $\pi(\chi)$,
of the three methods on the set of strictly convex SLBQPs with nondegenerate solutions,
using the execution time as performance metric. The profiles corresponding to
all the problems and to those with $\kappa (H) = 10^4$, $\kappa (H) = 10^5$,
and $\kappa (H) = 10^6$ are reported. We see that the version of P2GP using CG
in the minimization phase has by far the best performance. P2GP with SDC is faster than
the PABB$_{\rm min}$ and GPCG-like methods too. GPCG-like appears very sensitive to the condition
number of the Hessian matrix: its performance deteriorates as $\kappa (H)$ increases and the method
becomes less effective than PABB$_{\rm min}$ when $\kappa (H) = 10^6$.
This shows that the criterion used to terminate the minimization phase is more effective than the
criterion based on the bindingness of the active variables, especially as $\kappa (H)$
increases. We also report that the GPCG-like method has 6 failures over 36 runs for the
problems with $\kappa(H) = 10^6$.

For the previous problems, the performance profiles concerning the number
of matrix-vector products and the number of projections are also
shown, in Figure~\ref{fig:sconv_mv_p}. We see that PABB$_{\rm min}$ performs
the smallest number of matrix-vector products, followed by P2GP with GC,
and then by GPCG-like and P2GP with SDC. On the other hand, the number of
projections computed by P2GP with CG and with SDC is much smaller than
for the other methods; as expected, the maximum number of projections is
computed by PABB$_{\rm min}$.~This shows than the performance
of the methods cannot be measured only in terms of matrix-vector products;
the cost of the projections must also be considered, especially when
the structure of the Hessian makes the computational
cost of the matrix-vector products lower than $O(n^2)$. The good behavior
of P2GP results from the balance between matrix-vector products and projections. 

The performance profiles concerning the execution times on the strictly convex
SLBQPs with degenerate solutions, on the convex (but not strictly convex) SLBQPs,
and on the nonconvex ones are reported in Figure~\ref{fig:deg_conv_nonconv_time}.
Of course, the version of P2GP using the SDC solver was not applied to
the last two sets of problems. In the case of nonconvex problems, only 85\%
of the runs were considered, corresponding to the cases where the values
of the objective function at the solutions computed by the different methods
differ by less than 1\%. P2GP with CG is generally the best method, followed
by GPCG-like and then by PABB$_{\rm min}$. Furthermore, on strictly convex
problems with degenerate solutions, P2GP with SDC performs better than GPCG-like
and PABB$_{\rm min}$. GPCG-like is less robust than the other methods,
since it has 4 failures on the degenerate stricltly convex problems and 8 failures
on the convex ones.  This confirms the effectiveness of the proportionality-based
criterion.

For completeness, we also run the experiments on the strictly convex problems
with nondegenerate solutions by replacing the line search strategy in PABB$_{\rm min}$ with
a monotone line search along the feasible
direction~\cite[Section~2.3.1]{Bertsekas:1999}, which requires only one projection per
GP iteration.  We note that this line search does not guarantee in general that the sequence
generated by the GP method identifies in a finite number of steps the variables that are active
at the solution (see, e.g., \cite{DeAngelis:1993}). Nevertheless, we made experiments with the line search
along the feasible direction, to see if it may lead to any time gain in practice. The results obtained,
not reported here for the sake of space, show that the two line searches lead to comparable times
when the number of active variables at the solution is small, i.e., \texttt{naxsol} $= 0.1$. On the other hand, 
the execution time with the original line search is slightly smaller when the
number of active variables at the solution is larger.

Finally, the performance profiles concerning the execution times taken by the P2GP,
PABB$_{\rm min}$ and GPCG-like methods on the strictly convex BQPs with nondegenerate
and degenerate solutions, on the convex (but not strictly convex) BQPs, and on the nonconvex ones
are shown in Figure~\ref{fig:bqp}. Only 97\% of the runs on the nonconvex
problems are selected, using the same criterion applied to nonconvex SLBQPs.
P2GP with CG is again the most efficient method. The behavior of the methods is
similar to that shown on SLBQPs. However, P2GP with SDC and PABB$_{\rm min}$ have
closer behaviors, according to the smaller time required by projections
onto boxes, which leads to a reduction of the execution time of PABB$_{\rm min}$.
GPCG-like has again some failures: 6 on the strictly convex problems with nondegenerate
solutions, 5 on the ones with degenerate solutions, and 9 on the convex (but not
strictly convex) problems.

\begin{figure}[t]
   \centering
   \setlength{\tabcolsep}{-8pt}
   \begin{tabular}{cc}
      \hspace*{-9pt}
      \includegraphics[width=0.55\textwidth]{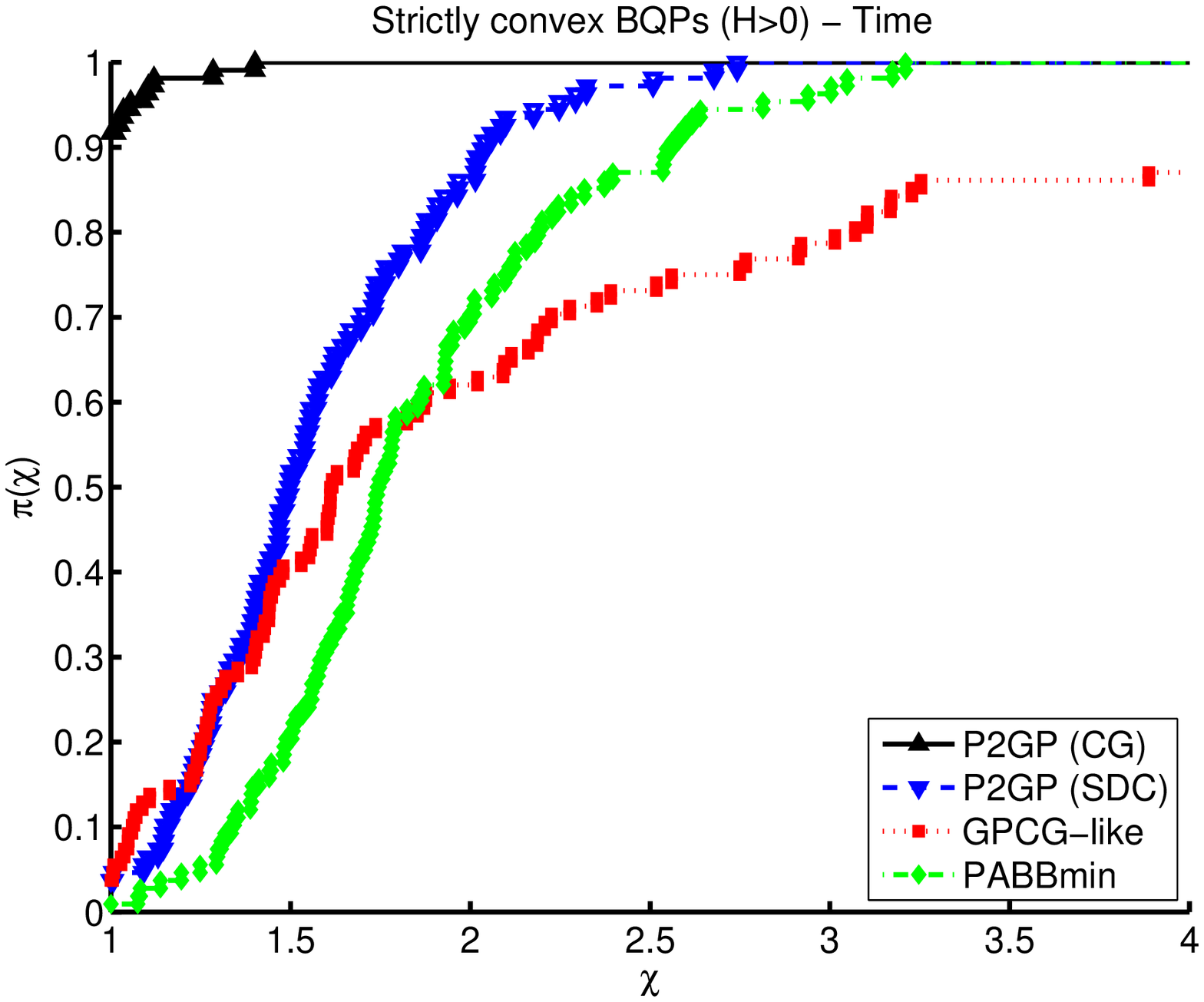} &
      \includegraphics[width=0.55\textwidth]{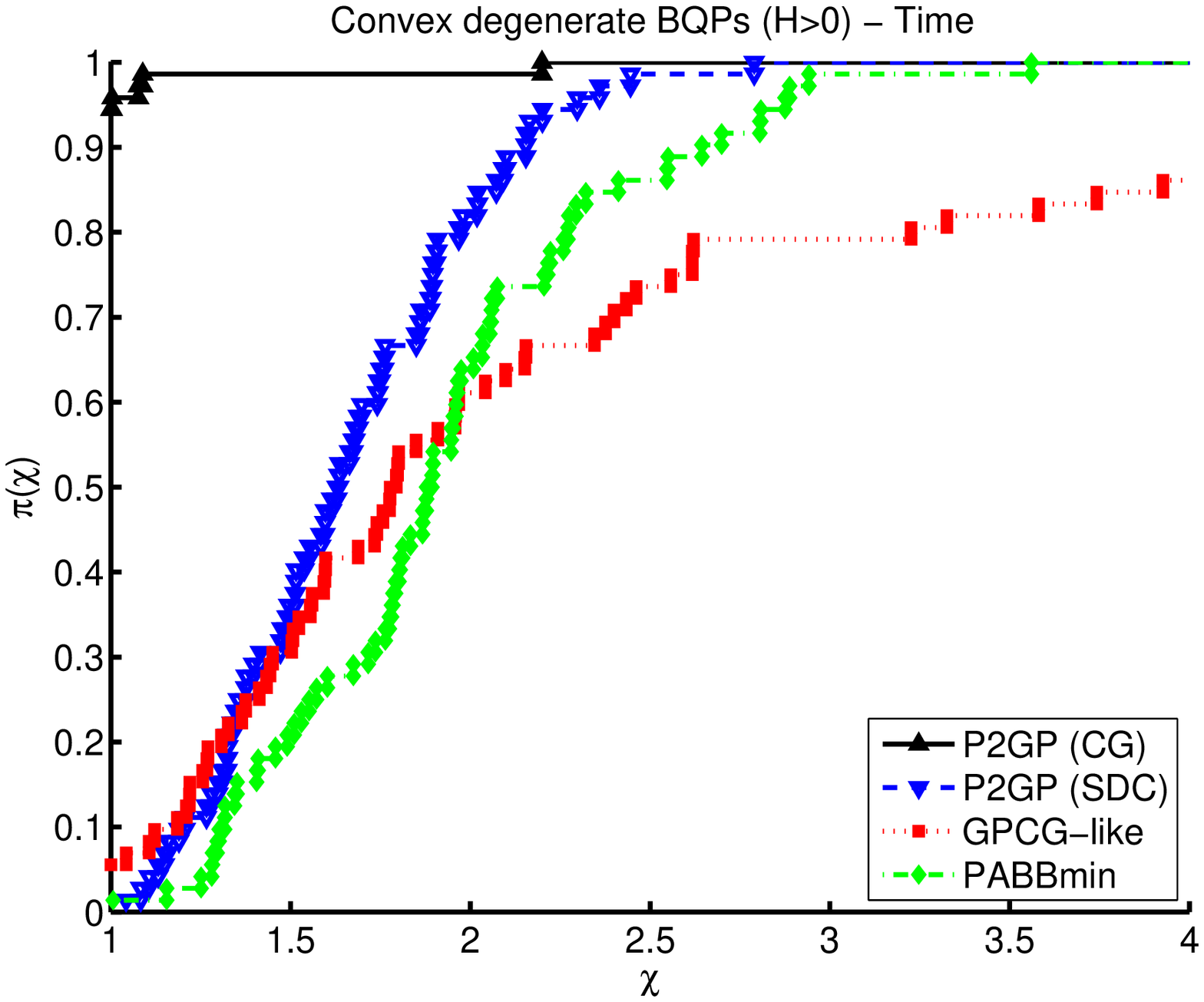}  \\[-4pt]
      \hspace*{-9pt}
      \includegraphics[width=0.55\textwidth]{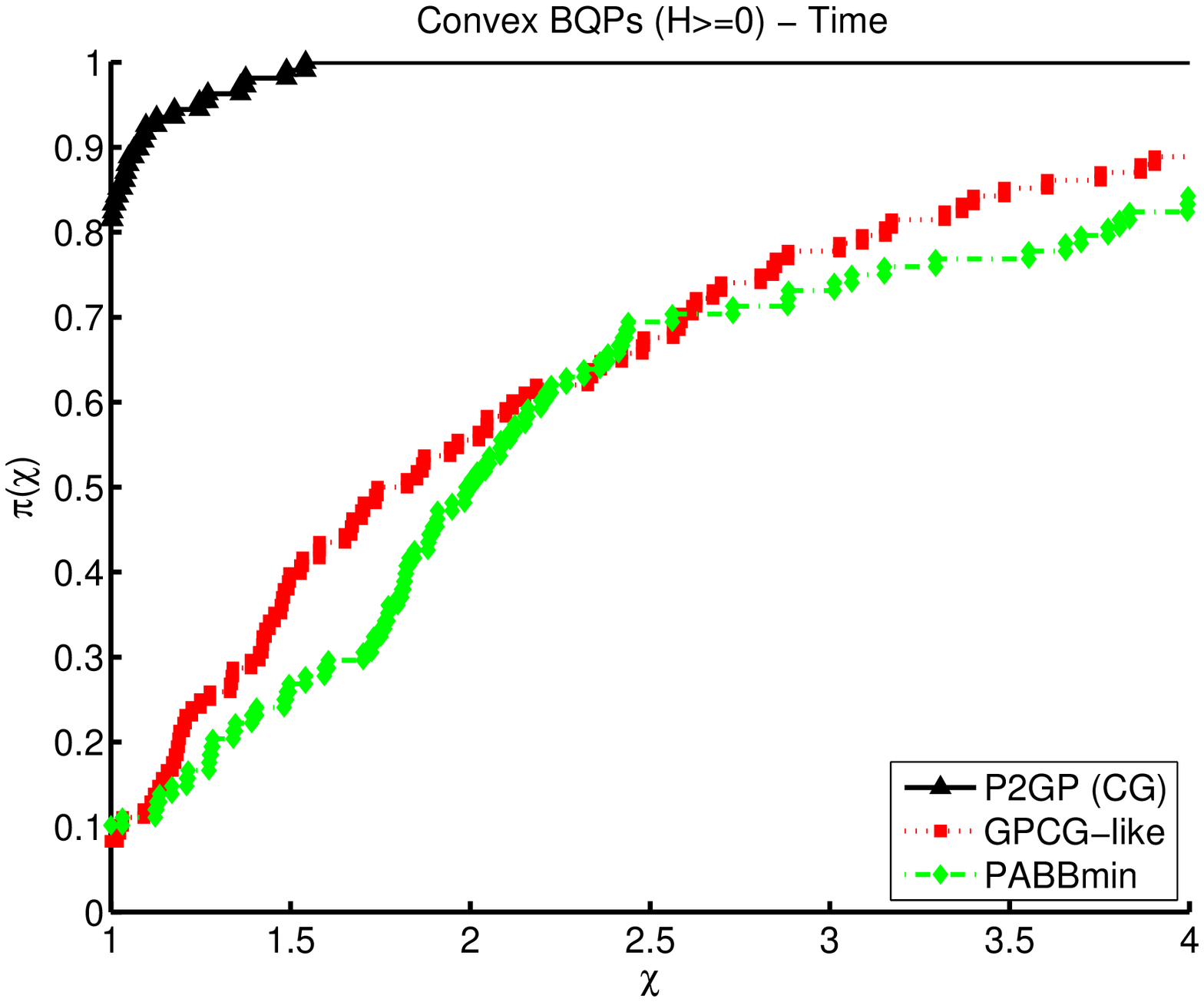} &
      \includegraphics[width=0.55\textwidth]{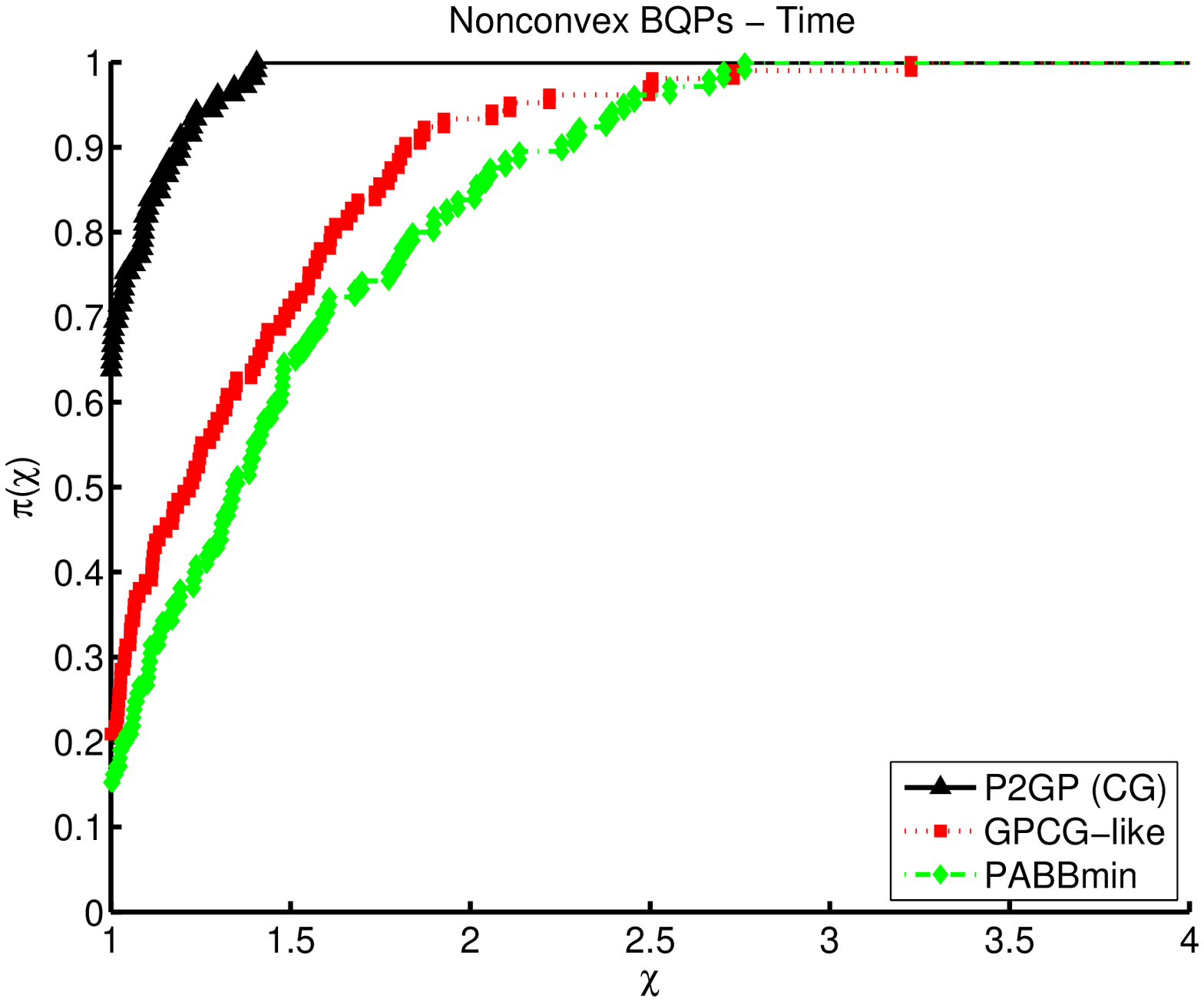}
   \end{tabular}
   \vskip -10pt
   \caption{Performance profiles (execution times) of P2GP with CG and SDC, 
PABB$_{\rm min}$, and GPCG-like on strictly convex BQPs with nondegenerate
solutions (top left), strictly convex BQPs with degenerate solutions (top right),
convex BQPs (bottom left), nonconvex BQPs (bottom right).\label{fig:bqp}}
\end{figure}

Now we compare P2GP (using CG) with BLG on the random problems. BLG was run in its
full-space mode (default mode), because the form of the Hessian \eqref{eq:randhess} does not allow to take
advantage of the subspace mode.
The stopping condition~\eqref{accur_req} was implemented in BLG, and
the code was run with the same tolerance and the same maximum numbers of matrix-vector products
and projections used for P2GP. Default values were used for the remaining parameters of BLG.
Of course, a comparison of the two codes in terms of execution time would be misleading, since BLG is written in C,
while P2GP has been implemented in Matlab. Therefore, we consider the matrix-vector products.
We do not show a comparison in terms of projections too, because BLG does a projection at each iteration,
and this generally results in many more projections than P2GP.
Performance profiles are provided in Figure~\ref{fig:blg_mv_p}.
The results concerning all the types of convex problems are shown together, since their
profiles are similar. On these problems P2GP appears more efficient than BLG; we also verified that the
objective function values at the solutions
computed by the two codes agree on at least six significant digits and are smaller for P2GP for 70\%
of the test cases. Furthermore, in four cases BLG does not satisfy condition~\eqref{accur_req} within the maximum
number of matrix-vector products and projections.
The situation is different for the nonconvex problems,
where the number of matrix-vector products performed by BLG is smaller. In this case, we verified that
BLG also used Frank-Wolfe directions, which were never chosen for the convex problems.
This not only reduced the number of matrix-vector products, but often led to smaller objective
function values. The values of the objective function at the solutions computed by the
two methods differ by less than 1\% for only 47\% of the test cases, which are the ones considered
in the performance profiles on the right of Figure~\ref{fig:blg_mv_p}. On the other hand, 
in three cases BLG performs the maximum number of matrix-vector products without achieving the required accuracy.

\begin{figure}[t]
   \centering
   \setlength{\tabcolsep}{-8pt}
   \begin{tabular}{cc}
      \hspace*{-9pt}
      \includegraphics[width=0.55\textwidth]{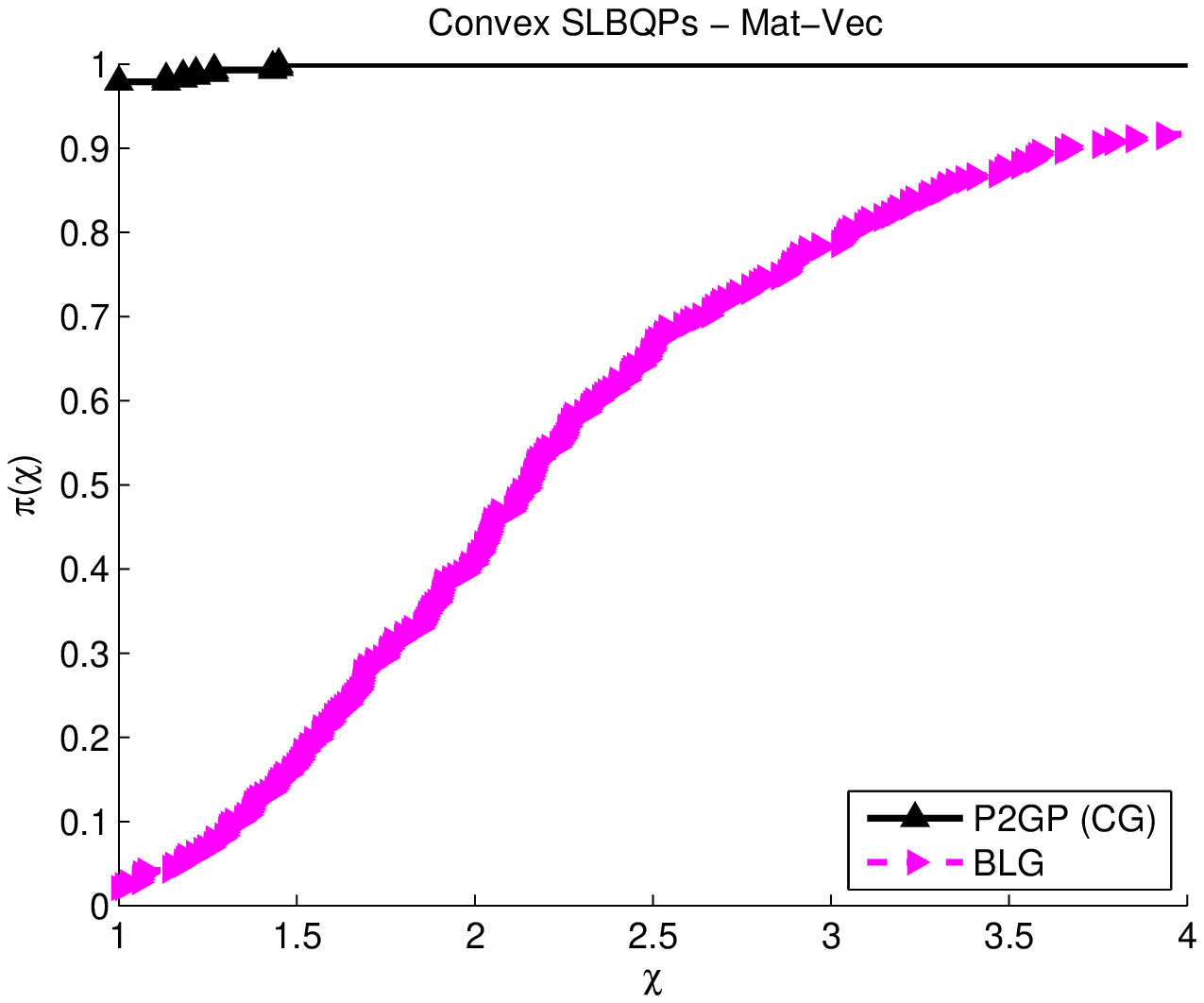} &
      \includegraphics[width=0.55\textwidth]{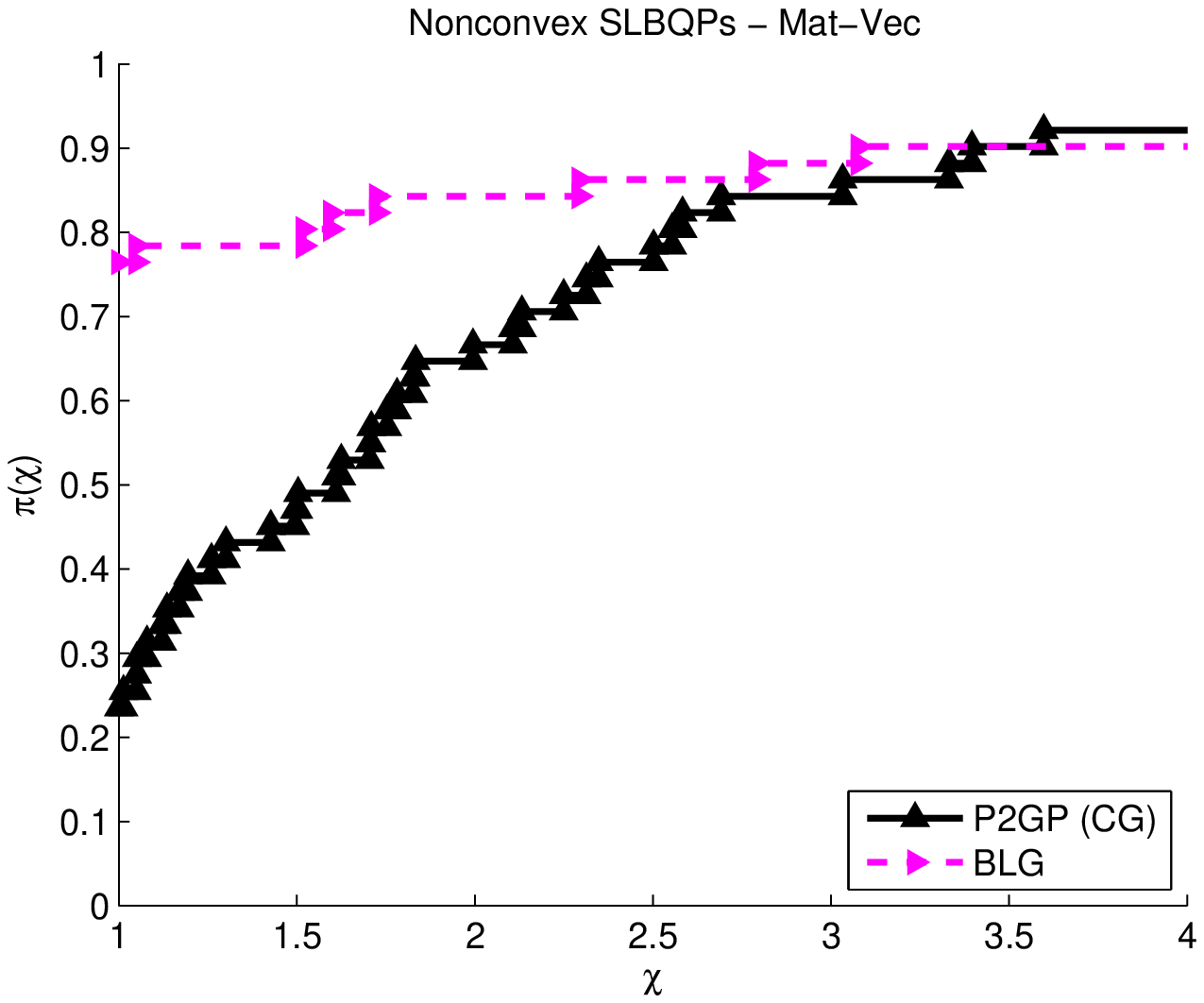}  \\[-4pt]
   \end{tabular}
   \vskip -10pt
   \caption{Performance profiles of P2GP, with CG, and BLG on convex (left) and nonconvex (right)
   SLBQPs: number of matrix-vector products.\label{fig:blg_mv_p}}
\end{figure}

\subsection{Results on SVM problems\label{sec:svmresults}}

In order to read the SVM problems, available in the LIBSVM format, BLG was run
through the SVMsubspace code, available from \url{http://users.clas.ufl.edu/hager/papers/Software/}.
Since we were interested in comparing P2GP with the GP implementation provided by BLG,
SVMsubspace was modified to have the SVM subspace equal to the entire space, i.e., to apply
BLG to the full SVM problem. For completeness we also run SVMsubspace in its subspace mode
(see~\cite{GonzalezLima:2011}), to see what the performance gain is with this feature. In the following,
we refer to the former implementation as BLGfull, and to the latter as SVMsubspace.

Following~\cite{GonzalezLima:2011}, BLGfull and SVMsubspace were used with their original stopping condition,
with tolerance $10^{-3}$. P2GP was terminated when the infinity norm of the projected gradient
was smaller then the same tolerance. With these stopping criteria, the two codes returned objective function values
agreeing on about six significant digits, with smaller function values generally obtained by P2GP.
At most 70000 matrix-vector products and 70000 projections were allowed, but they were never reached.

In Figure~\ref{fig:svmres}, left, the performance profiles (in logarithmic scale) concerning the matrix-vector
products of P2GP (with CG) and BLGfull are shown.
A comparison in terms of projections and execution times is not carried out for the same reasons explained
for the random problems. BLGfull appears superior than P2GP; on the other hand, we verified that
the number of projections performed by BLG is by far greater than that of P2GP for eight out of ten problems.
However, it must be noted that SVMsubspace is much faster than BLGfull, as shown by the performance
profiles concerning their execution times (see Figure~\ref{fig:svmres}, right). This confirms the great
advantage of performing reduced-size matrix-vector products in solving the subspace problems
for this class of test cases.

\begin{figure}[t]
   \centering
   \setlength{\tabcolsep}{-8pt}
   \begin{tabular}{cc}
      \hspace*{-9pt}
      \includegraphics[width=0.55\textwidth]{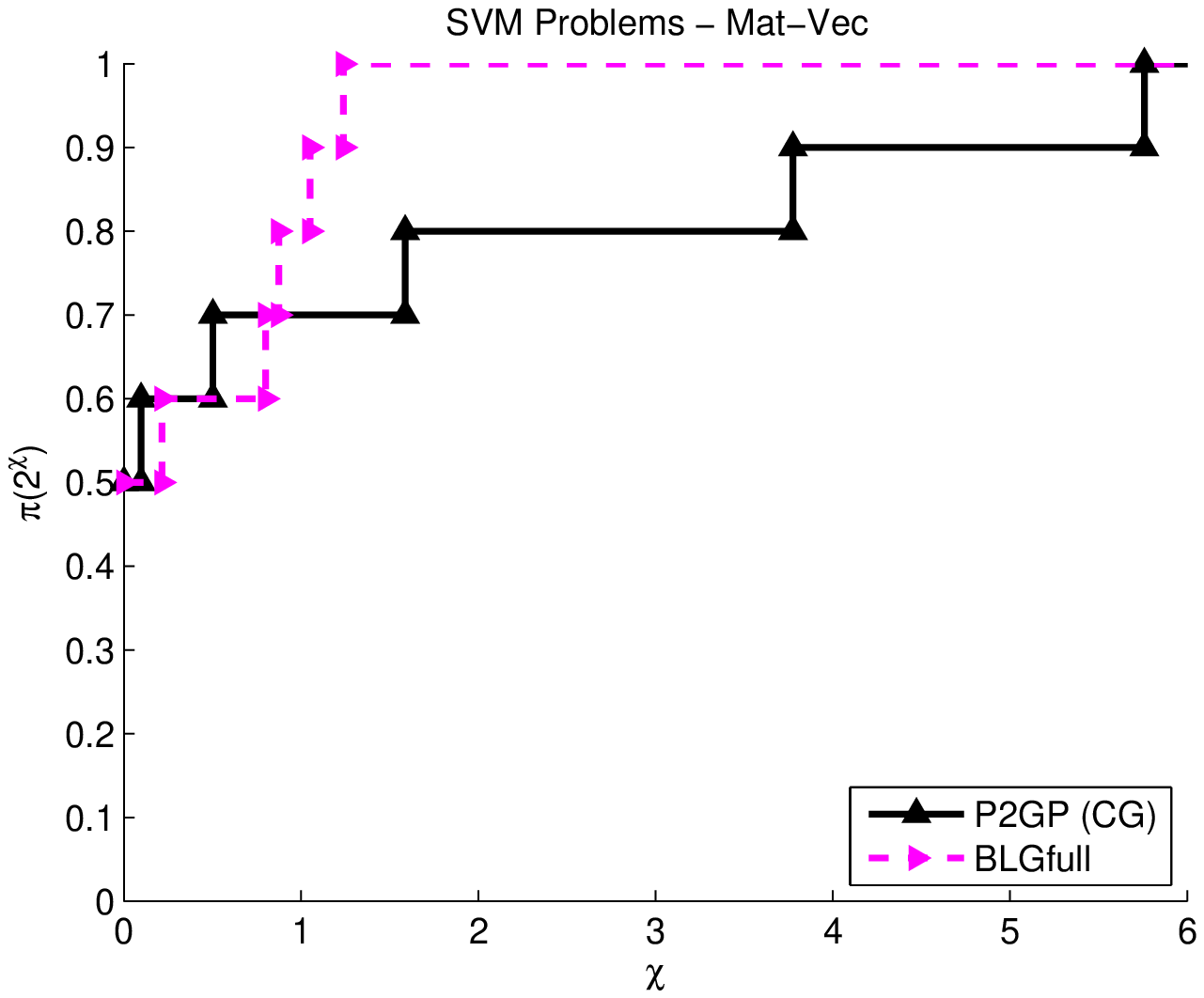} &
      \includegraphics[width=0.55\textwidth]{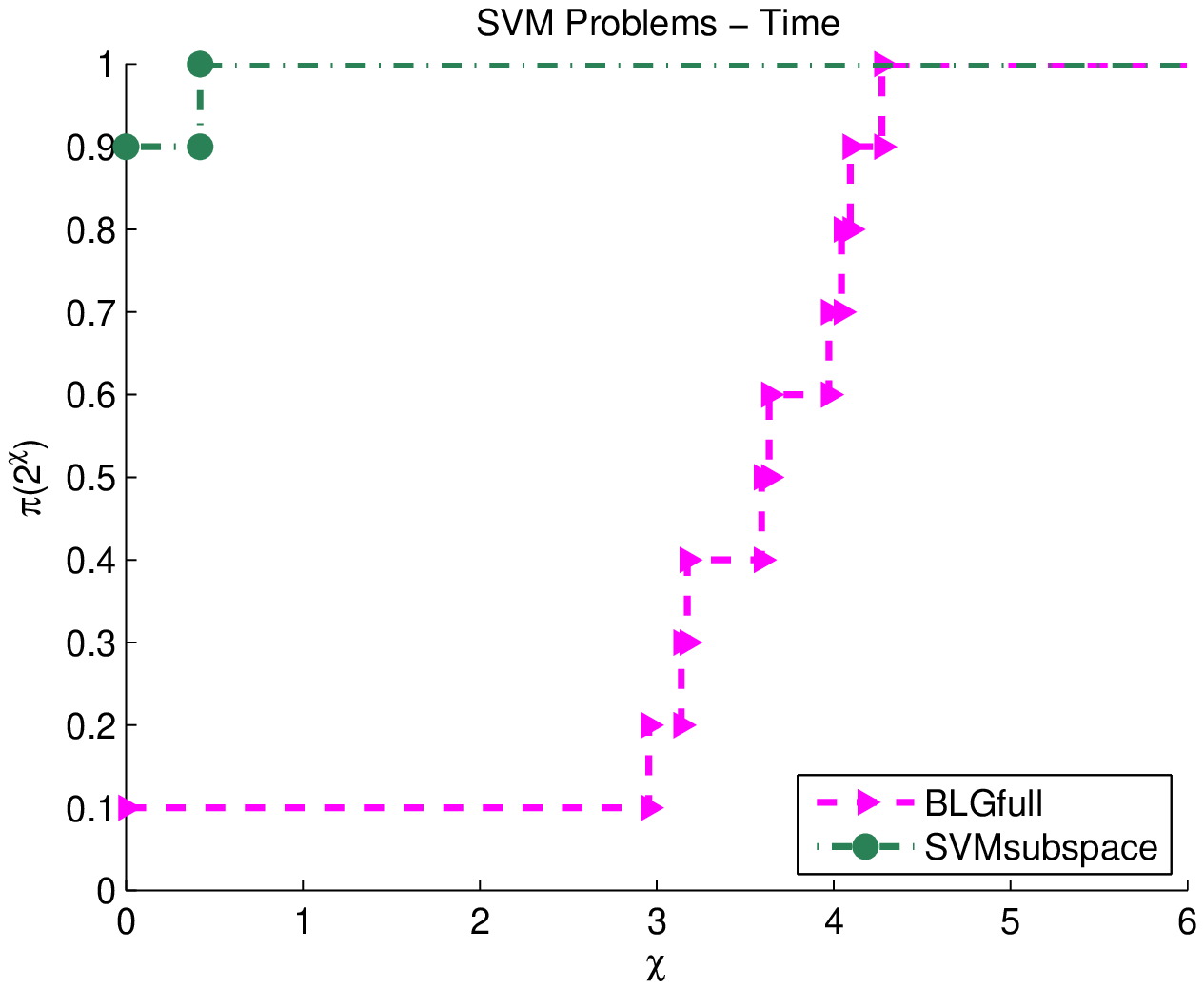}  \\[-4pt]
   \end{tabular}
   \vskip -10pt
   \caption{Performance profiles on SVM test problems: number of matrix-vector of P2GP, with CG, and BLG (left), and
   execution times of BLG and SVMsubspace (right).\label{fig:svmres}}
\end{figure}

\section{Concluding remarks\label{sec:conclusions}}

We presented P2GP, a new method for SLBQPs which has its roots in the GPCG method.
The most distinguishing feature of P2GP with respect to GPCG stands in the 
criterion used to stop the minimization phase. This is a critical issue,
since requiring high accuracy in this phase can be a useless and time-consuming task
when the face where a solution lies is far from being identified. 

Our numerical tests show a strong improvement of the computational performance when
the proportionality criterion is used to control the termination of the minimization phase.
In particular, the comparison of P2GP with an extension of GPCG to SLBQPs shows
the clear superiority of P2GP and its smaller sensitivity to the Hessian condition number.
Thus, proportionality allows to handle the minimization phase in a more clever way.
The numerical results also show that P2GP requires much fewer projections than
efficient GP methods like PABB$_{\rm min}$ and the one implemented in BLG.
This leads to a significant time saving,
especially when the Hessian matrix is sparse or has a structure that allows the computation
of the matrix-vector product with a computational cost smaller than $O(n^2)$, where $n$
is the size of the problem.
From the theoretical point of view, a nice consequence of using the proportionality criterion
is that finite convergence for strictly convex problems can be proved even in case of
degeneracy at the solution.

An interesting feature of P2GP is that it provides a general framework, allowing different
steplength rules in the GP steps, and different methods
in the minimization phase. The encouraging theoretical and computational results
suggest that this framework deserves to be further investigated, and possibly extended
to more general problems. For example, it would be interesting to extend P2GP to general
differentiable objective functions or to problems with bounds and
a few linear constraints.

The Matlab code implementing P2GP used in the experiments is available from
\url{https://github.com/diserafi/P2GP}. It includes
the test problem generator described in Section~\ref{sec:synthpbs}.

\vskip 10pt
\noindent
\textbf{Acknowledgments.}
We wish to thank William Hager for helpful discussions about the use of the BLG code
and for insightful comments on our manuscript.
We also express our thanks to the anonymous referees for their useful remarks and suggestions, which
allowed us to improve the quality of this work.

\bibliographystyle{siam}
\bibliography{biblio_grad}
	
\end{document}